%%% grenoble.tex 
%%% Graphs where every $k$-subset of vertices is an identifying set
%%% Sylvain Gravier, Svante Janson, Tero Laihonen, Sanna Ranto

\documentclass{article}

\usepackage[T1]{fontenc}
\usepackage[english]{babel}
\usepackage[dvips]{epsfig}
\usepackage{amsmath}
\usepackage{amsthm, amssymb, tabularx,eepic,epic}

\begin{document}
\theoremstyle{plain}
\newtheorem{thm}{Theorem}
\newtheorem{lemma}[thm]{Lemma}
\newtheorem{cor}[thm]{Corollary}

\theoremstyle{definition}
\newtheorem{defin}[thm]{Definition}
\newtheorem{ex}[thm]{Example}
\newtheorem{problem}[thm]{Problem}
\newtheorem*{rem}{Remark}
\newtheorem*{ack}{Acknowledgement}

\renewcommand\P{\operatorname{\mathbb P{}}}
\newcommand{\diam}{\mbox{diam}}

\newenvironment{romenumerate}{\begin{enumerate}% gives (i), (ii) etc.
 \renewcommand{\labelenumi}{\textup{(\roman{enumi})}}%
 \renewcommand{\theenumi}{\textup{(\roman{enumi})}}%
 }{\end{enumerate}}

\newcommand{\refT}[1]{Theorem~\ref{#1}}
\newcommand{\refC}[1]{Corollary~\ref{#1}}
\newcommand{\refL}[1]{Lemma~\ref{#1}}
\newcommand{\refE}[1]{Example~\ref{#1}}
\newcommand{\refS}[1]{Section~\ref{#1}}

\newcommand\marginal[1]{\marginpar{\raggedright\parindent=0pt\tiny #1}}
\newcommand\SJ{\marginal{SJ}}
\newcommand\XXX{XXX \marginal{XXX}}
\newcommand\REM[1]{{\raggedright\texttt{[#1]}\par\marginal{XXX}}}
\newcommand{\kolla}{\marginal{KOLLA!}}

\newcommand\set[1]{\ensuremath{\{#1\}}}
\newcommand\bigset[1]{\ensuremath{\bigl\{#1\bigr\}}}
\newcommand\Bigset[1]{\ensuremath{\Bigl\{#1\Bigr\}}}
\newcommand\biggset[1]{\ensuremath{\biggl\{#1\biggr\}}}
\newcommand\xpar[1]{(#1)}
\newcommand\bigpar[1]{\bigl(#1\bigr)}
\newcommand\Bigpar[1]{\Bigl(#1\Bigr)}
\newcommand\biggpar[1]{\biggl(#1\biggr)}
\newcommand\lrpar[1]{\left(#1\right)}
\newcommand\bigsqpar[1]{\bigl[#1\bigr]}
\newcommand\Bigsqpar[1]{\Bigl[#1\Bigr]}
\newcommand\biggsqpar[1]{\biggl[#1\biggr]}
\newcommand\lrsqpar[1]{\left[#1\right]}
\newcommand\bigabs[1]{\bigl|#1\bigr|}
\newcommand\Bigabs[1]{\Bigl|#1\Bigr|}
\newcommand\biggabs[1]{\biggl|#1\biggr|}
\newcommand\lrabs[1]{\left|#1\right|}

\newcommand{\lex}{{\le}\,}
\newcommand{\NN}{\Xi}
\newcommand{\Gr}{\mathfrak{Gr}}
\newcommand{\cupx}{\cup}
\newcommand{\Nx}{N[x]}
\newcommand{\Ny}{N[y]}
\newcommand{\Nz}{N[z]}
\newcommand{\Nw}{N[w]}
\newcommand{\Nxi}{N[x_i]}
\newcommand{\Nxj}{N[x_j]}
\newcommand{\gd}{\delta}
\newcommand{\gl}{\lambda}
\def\symdiff{\bigtriangleup}
\newcommand{\SRG}{\ensuremath{\text{-SRG}}}
\newcommand{\RSHCD}{\ensuremath{\text{RSHCD}}}

\title{Graphs where every $k$-subset of vertices is an identifying set}
\author{Sylvain Gravier\thanks{CNRS - Institut Fourier, ERT Maths \`a Modeler, 100 rue des Maths BP 74, 38402 Saint Martin d'Hères, France, Sylvain.Gravier@ujf-grenoble.fr}\and
Svante Janson\thanks{Uppsala University, Department of Mathematics
P.O. Box 480 S-751 06 Uppsala, Sweden, svante.janson@math.uu.se}\and
Tero Laihonen \thanks{Department of Mathematics,
University of Turku, 20014 Turku, Finland, terolai@utu.fi. Research supported by the Academy of Finland under grant 111940.} \and Sanna Ranto\thanks{Department of Mathematics, University of Turku, 20014 Turku, Finland,
samano@utu.fi. Research supported by the Academy of Finland under grant 111940.}}

\date{\today} 
\maketitle
\begin{abstract} 
Let $G=(V,E)$ be an undirected graph without loops and multiple edges.
A subset $C\subseteq V$ is called \emph{identifying} if for every
vertex $x\in V$ 
the intersection of $C$ and the closed neighbourhood
of $x$ is nonempty, and these intersections are different for
different vertices $x$. 

Let $k$ be a positive integer. We will consider graphs where \emph{every} $k$-subset is identifying. We prove that for every  $k>1$ the maximal order of such a graph is at most $2k-2.$ Constructions attaining the maximal order are given for infinitely many values of $k.$

The corresponding problem of $k$-subsets identifying any at most $\ell$ vertices is considered as well.
\end{abstract}

\section{Introduction}

Karpovsky \emph{et al.} introduced identifying sets in \cite{kcl} for locating faulty procesors in multiprocessor systems. Since then identifying sets have been considered in many different graphs (see numerous references in \cite{lowww}) and they find their motivations, for example, in sensor networks and enviromental monitoring \cite{LT:IdcodeCovering}. For recent developments see for instance \cite{Auger, ACHHL}.

Let $G=(V,E)$ be a simple undirected graph where $V$ is the set of vertices and $E$ is
the set of edges. The adjacency between vertices $x$ and $y$ is denoted by $x\sim y$, and an edge between $x$ and $y$ is denoted by $\{x,y\}$ or $xy$. Suppose $x,y\in V$. The \emph{(graphical)
distance} between $x$ and $y$ is the shortest path between these
vertices and it is denoted by $d(x,y)$. If there is no such path, then $d(x,y)=\infty$. We denote by $N(x)$ the set
of vertices adjacent to $x$ (\emph{neighbourhood}) and the
\emph{closed neighbourhood} of a vertex $x$ is $N[x]=\{x\}\cup
N(x)$. The closed neighbourhood within radius $r$ centered at $x$ is
denoted by $N_r[x]=\{y\in V\mid d(x,y)\le r\}$. We denote further
$S_r(x)=\{y\in V\mid d(x,y)=r\}$. Moreover, for $X\subseteq V$,
$N_r[X]=\cup_{x\in X}N_r[x]$.  For $C\subseteq V$, 
$X\subseteq V$, and $x\in V$ we denote
\begin{gather*}
I_r(C;x)=I_r(x)=N_r[x]\cap C,
\\
I_r(C;X)=I_r(X)=N_r[X]\cap C
=\bigcup_{x\in X} I_r(C;x).
\end{gather*}
If $r=1$, we drop it from the notations.
When necessary, we add a subscript $G$.
We also write, for example, $N[x,y]$ and $I(C;x,y)$ for $N[\set{x,y}]$
and $I(C;\set{x,y})$.
The \emph{symmetric
difference} of two sets is
\[A\symdiff B=(A\setminus B)\cup (B\setminus A).\]
The cardinality of a set $X$ is denoted by $|X|$; we will also write
$|G|$ for the order $|V|$ of a graph $G=(V,E)$.
The \emph{degree}
of a vertex $x$ is $\deg(x)=|N(x)|$. Moreover, $\delta_G=\delta=\min_{x\in
V}\deg(x)$ and $\Delta_G=\Delta=\max_{x\in V}\deg (x)$. 
The \emph{diameter} of a graph $G=(V,E)$ is 
$\diam(G)=\max\{d(x,y)\mid x,y\in V\}$. 

We say that a vertex $x\in V$ \emph{dominates} a vertex $y\in V$ if
and only if $y\in N[x]$. As well we can say that a vertex $y$ is
\emph{dominated} by $x$ (or vice versa). A subset $C$ of vertices
$V$ is called a \emph{dominating set} (or \emph{dominating}) if
$\cup_{c\in C}N[c]=V$.

\begin{defin}
A subset $C$ of vertices of a graph $G=(V,E)$ is called
$(r,\lex\ell)$-\emph{identifying} (or an
$(r,\lex\ell)$-\emph{identifying set}) if for all $X,Y\subseteq V$
with
$|X|\le\ell$, $|Y|\le \ell$, $X\ne Y$ we have
\[I_r(C;X)\ne I_r(C;Y).\]

If $r=1$ and $\ell=1$, then we speak about an \emph{identifying
set}.
\end{defin}

The idea behind identification is that we can uniquely determine the subset $X$ of vertices of a graph $G=(V,E)$ by knowing only $I_r(C;X)$ --- provided that $|X|\le \ell$ and $C\subseteq V$ is an $(r,\le \ell)$-identifying set.

\begin{defin}
Let, for $n\ge k\ge 1$ and $\ell\ge1$, $\Gr(n,k,\ell)$ be the set of graphs 
on $n$ vertices such that every $k$-element
set of vertices is $(1,\lex\ell)$-identifying.
Moreover, we denote $\Gr(n,k,1)=\Gr(n,k)$
and $\Gr(k)=\bigcup_{n\ge k} \Gr(n,k)$.
\end{defin}

\begin{ex}\label{exa}
  \begin{romenumerate}
\item\label{exaE}
For every $\ell\ge1$, an empty graph $E_n=(\{1,\ldots,n\},\emptyset)$
belongs to $\Gr(n,k,\ell)$ if and only if $k=n$.

\item\label{exaC}
A cycle $C_n$  ($n\ge 4$) belongs to $\Gr(n,k)$ if and only if
$n-1\le k\le n$. A cycle $C_n$ with $n\ge 7$ is in $\Gr(n,n,2)$.

\item\label{exaP}
A path $P_n$ of $n$ vertices ($n\ge 3$) belongs to $\Gr(n,k)$ if
and only if $k=n$.

\item\label{exaKK}
A complete bipartite graph $K_{n,m}$ $(n+m\ge 4)$ is in $\Gr(n+m,k)$
if and only $n+m-1\le k\le n+m$.

\item\label{exaS}
In particular, 
a star $S_n=K_{1,n-1}$ 
($n\ge4$) is in $\Gr(n,k)$ if and only if $n-1\le k\le n$.

\item\label{exaK}
The complete graph $K_n$ ($n\ge2$) is not in $\Gr(n,k)$ for any $k$.
  \end{romenumerate}
\end{ex}

We are interested in the maximum number $n$ of vertices which can be
reached by a given $k$. 
We study mainly the case $\ell=1$ and define
\begin{equation}
  \label{NN}
\NN(k)=\max\set{n:\Gr(n,k)\neq\emptyset}.
\end{equation}
Conversely, the question is for a given
graph on $n$ vertices what is the smallest number $k$ such that
every $k$-subset of vertices is an identifying set (or a $(1,\lex
\ell)$-identifying set). 
(Note that even if we take $k=n$, there are graphs on $n$ vertices that
do not belong to $\Gr(n,n)$, for example the complete graph $K_n$, $n\ge2$.)
The relation $n/k$ is called the \emph{rate}.

In particular, we are interested in the asymptotics as $k\to\infty$. 
Combining \refT{kbound} and \refC{Clower}, we obtain the following,
which in particular shows that the rate is always less than 2.

\begin{thm}\label{T2}
$\NN(k)\le 2k-2$ for all $k\ge2$, and
$ \lim_{k\to\infty}\frac{\NN(k)}{k}=2$.
\end{thm}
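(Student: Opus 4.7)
The plan is to combine the two referenced results. The upper bound $\NN(k) \le 2k-2$ is \refT{kbound}; dividing by $k$ gives $\NN(k)/k \le 2 - 2/k$ for every $k \ge 2$, so $\limsup_{k \to \infty} \NN(k)/k \le 2$.

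For the matching lower bound I would invoke \refC{Clower}, which (per the abstract) yields graphs realising the bound $n = 2k-2$ for a sufficiently rich collection of values of $k$. To transfer such constructions to all large $k$, the key observation is a monotonicity property: if $G \in \Gr(n,k)$ and $k < n$, then $G \in \Gr(n, k+1)$. Indeed, any $(k+1)$-subset $C'$ contains a $k$-subset $C$, which is identifying by hypothesis; any vertex $z \in C$ witnessing $I(C;x) \ne I(C;y)$ still lies in $C'$ and witnesses $I(C';x) \ne I(C';y)$, and of course $I(C';x) \supseteq I(C;x) \ne \emptyset$. Hence once $\Gr(2k_0 - 2, k_0) \ne \emptyset$, we obtain $\NN(k) \ge 2k_0 - 2$ for every $k$ in the range $k_0 \le k \le 2k_0 - 2$.

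Combining these ingredients, for any large $k$ I take $k_0$ to be the largest value $\le k$ at which \refC{Clower} yields a construction, giving $\NN(k)/k \ge (2k_0 - 2)/k$. Provided such values $k_0^{(1)} < k_0^{(2)} < \cdots$ satisfy $k_0^{(j)}/k_0^{(j+1)} \to 1$, we get $k_0/k \to 1$ and hence $\liminf_{k \to \infty} \NN(k)/k \ge 2$. Together with the upper bound this yields the limit.

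The main obstacle is precisely this density question: if \refC{Clower} gave constructions only along a very sparse subsequence (for example, doubling gaps), monotonicity alone would establish the limit only along that subsequence, not for all $k$. I would therefore expect \refC{Clower} to supply either a direct asymptotic lower bound of the form $\NN(k) \ge 2k - o(k)$ or an infinite family of constructions whose successive parameters have ratio tending to $1$; with either of these the argument above closes cleanly.
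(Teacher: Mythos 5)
Your proposal is correct and matches the paper's argument, which simply combines Theorem~\ref{kbound} with Corollary~\ref{Clower}. The density concern you raise is already resolved inside the paper: Corollary~\ref{Clower} states exactly the direct asymptotic bound $\NN(k)\ge 2k-o(k)$ you hoped for (proved via Paley graphs and the prime number theorem), so the limit follows immediately.
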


We will see in \refS{sec:strong} that %the bound is attained and
$\NN(k)=2k-2$ for infinitely many $k$. 

\begin{rem}
We consider in this paper the set $\Gr(n,k,\ell)$ only for
$(1,\lex\ell)$-identifying 
sets, i.e.\ with radius $r=1$, because increasing the radius does not
increase the maximum number of vertices for given $k$ and $\ell$.
Namely, if $G$ is a graph
such that every $k$-subset of vertices is $(r,\lex \ell)$-identifying
for a fixed $r\ge 2$, then the power graph of $G$, where every pair of
vertices with distance at most $r$ in $G$ are joined by an edge, belongs to
$\Gr(n,k,\ell)$. (However, the existence of a graph $G$ in
$\Gr(n,k,\ell)$ does not imply that every $k$-subset of vertices in $G$
is $(r,\lex \ell)$-identifying for $r\ge 2$.)  
\end{rem}

\begin{rem}
The similar question about graphs where every $k$-subset of vertices would
be a dominating set is easy. Namely, every vertex of a
complete graph with $n$ vertices forms alone a dominating set for
all $n$, so for this problem, $n$ can be arbitrary, even for $k=1$.
\end{rem}

% \begin{rem}
% Equivalently, we may use $\Nx$ (perhaps translated into a binary string
% of length $n$) as a code for the vertex $x$ (and $\emptyset$ for
%   ``no vertex''),
% and ask when this code is error detecting or error correcting.
% \marginal{Is the terminology correct?}
% We can detect up to $r$ simultaneous errors when these code words have
%   Hamming distance at least $r+1$, which is exactly when $G\in
%   \Gr(n,n-r)$, see \refT{karakterisointi}.
% Similarly, we can correct up to $r$ simultaneous errors when these
%   code words have Hamming distance at least $2r$, which is exactly when $G\in
%   \Gr(n,n-2r+1)$.
% \end{rem}

We give some basic results in \refS{Sbasic}, including our
first upper bound on $\NN(k)$.
A better bound, based on a relation with error-correcting codes,
is given in \refS{Supper}, but we first
study small $k$ in \refS{Ssmall}, where we give a complete
description of the sets $\Gr(k)$ for $k\le 4$ and find $\NN(k)$ for
$k\le 6$.
We consider strongly regular graphs and some modifications of them in
\refS{sec:strong}; this provides us with examples (e.g., Paley graphs)
that attain or almost attain the upper bound in \refT{T2}.
In \refS{Ssmaller} we consider the probability that a random subset of
$s$ vertices in a graph $G\in\Gr(n,k)$ is identifying (for $s<k$); in
particular, this yields results on the size of the smallest
identifying set.
In \refS{Sell} we give some results for the case $\ell\ge2$.

\section{Some basic results}\label{Sbasic}

We begin with some simple consequences of the definition.

\begin{lemma}\label{L1}
  \begin{romenumerate}
	\item\label{L1a}
If $G\in \Gr(n,k,\ell)$, then $G\in \Gr(n,k',\ell')$ 
whenever $k\le k'\le n$ and $1\le\ell'\le\ell$. 
\item\label{L1b}
If $G=(V,E)\in \Gr(n,k,\ell)$, then every induced 
subgraph $G[A]$, where $A\subseteq V$, of order $|A|=m\ge k$ belongs to
$\Gr(m,k,\ell)$. 
\item\label{L1c}
If $\Gr(n,k)=\emptyset$, then $\Gr(n',k)=\emptyset$ for all $n'\ge n$.
  \end{romenumerate}
\end{lemma}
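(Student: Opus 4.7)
The plan is to prove the three parts largely from the definitions, since each is a monotonicity statement. I would handle (i) in two independent steps, deduce (ii) by observing that closed neighbourhoods inside an induced subgraph behave transparently for subsets of that subgraph, and then derive (iii) as an immediate contrapositive of (ii).

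For part (i), I would split the implication into independent $k\mapsto k'$ and $\ell\mapsto\ell'$ steps. The $\ell$-monotonicity is essentially free: the $(1,\lex\ell')$-identifying condition only quantifies over pairs $X,Y$ with $|X|,|Y|\le\ell'\le\ell$, so it is a strictly weaker demand than $(1,\lex\ell)$-identifying, and any $k$-set satisfying the stronger condition satisfies the weaker one. For the $k$-monotonicity, the key observation is that enlarging an identifying set preserves identification. Concretely, if $C'\subseteq C\subseteq V$ and $X\subseteq V$, then
\[
I(C';X)=N[X]\cap C' = \bigl(N[X]\cap C\bigr)\cap C' = I(C;X)\cap C',
\]
so $I(C;X)=I(C;Y)$ forces $I(C';X)=I(C';Y)$. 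Given a $k'$-set $C$, I pick any $k$-subset $C'\subseteq C$; by hypothesis $C'$ is $(1,\lex\ell)$-identifying, hence so is $C$.

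For part (ii), fix $A\subseteq V$ with $|A|=m\ge k$ and let $C\subseteq A$ be a $k$-subset. Since $C\subseteq V$ and $|C|=k$, the set $C$ is $(1,\lex\ell)$-identifying in $G$ by assumption. The crucial point is that for any $x\in A$,
\[
N_{G[A]}[x]\cap C = \bigl(N_G[x]\cap A\bigr)\cap C = N_G[x]\cap C
\]
because $C\subseteq A$, so $I_{G[A]}(C;X)=I_G(C;X)$ for every $X\subseteq A$. Thus identification in $G[A]$ reduces directly to identification in $G$, and $C$ is $(1,\lex\ell)$-identifying in $G[A]$.

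Part (iii) is the contrapositive of (ii) with $\ell=1$: any $G\in\Gr(n',k)$ with $n'\ge n$ has an induced subgraph on $n$ vertices that lies in $\Gr(n,k)$, so $\Gr(n,k)\ne\emptyset$ would follow from $\Gr(n',k)\ne\emptyset$. There is no real obstacle in any of the three parts; the only thing to be careful about is the direction of the set inclusion in the $k$-monotonicity argument and the observation that $C\subseteq A$ makes the induced-subgraph neighbourhoods interchangeable with those in $G$ when intersected with $C$.
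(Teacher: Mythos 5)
Your proof is correct and follows essentially the same route as the paper, which simply declares parts (i) and (ii) straightforward and derives (iii) from (ii) exactly as you do. The details you supply --- the identity $I(C';X)=I(C;X)\cap C'$ for the $k$-monotonicity and the observation that $N_{G[A]}[x]\cap C=N_G[x]\cap C$ when $C\subseteq A$ --- are the right ones and are carried out correctly.
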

\begin{proof}
  Parts \ref{L1a} and \ref{L1b} are straightforward to verify. For \ref{L1c}, note
  that any subset of $n$ vertices of a graph in $\Gr(n',k)$
would induce a graph in $\Gr(n,k)$ by \ref{L1b}.
\end{proof}

\begin{lemma}\label{Lcomp}
If $G$ has connected components $G_i$, $i=1,\dots,m$,
with $|G|=n$ and $|G_i|=n_i$, 
then $G\in\Gr(n,k,\ell)$ if and only if $G_i\in\Gr(n_i,k+n_i-n,\ell)$
for every $i$.
In other words, $G_i\in\Gr(n_i,k_i,\ell)$ with $n_i-k_i=n-k$.
\end{lemma}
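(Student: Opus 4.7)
The approach rests on the structural observation that, since the $G_i$ are connected components, for any $X \subseteq V(G_i)$ we have $N_G[X] = N_{G_i}[X] \subseteq V(G_i)$; consequently, writing $C_i = C \cap V(G_i)$ for $C \subseteq V$, we get $I_G(C;X) = I_{G_i}(C_i;X)$ whenever $X \subseteq V(G_i)$, and for an arbitrary $X \subseteq V$ with $X_i := X \cap V(G_i)$,
\[
I_G(C;X) = \bigcup_i I_{G_i}(C_i; X_i),
\]
the union being disjoint because the $V(G_i)$ are. With this identity in hand, the equivalence splits naturally into two directions.

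For the forward direction I would fix $i$, let $k_i = k + n_i - n$, take an arbitrary $k_i$-subset $C_i \subseteq V(G_i)$, and lift it to the $k$-subset $C = C_i \cup \bigcup_{j\ne i} V(G_j)$ of $V$. Since $C$ is $(1,\lex\ell)$-identifying in $G$, the identity $I_{G_i}(C_i;X) = I_G(C;X)$ for $X \subseteq V(G_i)$ transports the identifying property down to $C_i$ in $G_i$, giving $G_i \in \Gr(n_i, k_i, \ell)$. (The implicit requirement $k_i \ge 1$ is automatic from $G \in \Gr(n,k,\ell)$, since otherwise a $k$-subset disjoint from $V(G_i)$ would already fail to dominate $G_i$.)

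For the backward direction I would start from any $k$-subset $C \subseteq V$ and set $C_i = C \cap V(G_i)$. The constraints $|C_j| \le n_j$ and $\sum_j |C_j| = k$ force $|C_i| \ge k - \sum_{j \ne i} n_j = k + n_i - n = k_i$, and then \refL{L1}\ref{L1a} upgrades $G_i \in \Gr(n_i, k_i, \ell)$ to the fact that $C_i$ is $(1,\lex\ell)$-identifying in $G_i$. Given distinct $X, Y \subseteq V$ with $|X|, |Y| \le \ell$, pick an $i$ with $X_i \ne Y_i$; since $|X_i|, |Y_i| \le \ell$, identification in $G_i$ gives $I_{G_i}(C_i; X_i) \ne I_{G_i}(C_i; Y_i)$, and the disjoint-union decomposition lifts this to $I_G(C; X) \ne I_G(C; Y)$.

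The only subtle point is the backward step: the component sizes $|C_i|$ are only bounded below by $k_i$ and can vary with $C$, so the monotonicity from \refL{L1}\ref{L1a} is genuinely needed. Everything else is a direct translation through the component decomposition of $I_G(C;\cdot)$.
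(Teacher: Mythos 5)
Your proof is correct and follows essentially the same route as the paper: the paper's proof consists precisely of the two observations that every $k$-set of $G$ meets $V(G_i)$ in at least $k_i$ vertices and that every $k_i$-set of $G_i$ extends to a $k$-set of $G$ by adjoining the other components, leaving the rest as "follows easily." You have simply supplied the routine details (the component-wise decomposition of $I_G(C;\cdot)$ and the appeal to Lemma~\ref{L1}\ref{L1a}) that the paper omits.
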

\begin{proof}
  Every $k$-set of vertices contains at least $k_i=k-(n-n_i)$ vertices
  from $G_i$. Conversely, every $k_i$-set of vertices of $G_i$ can be
  extended to a $k$-set of vertices of $G$ by adding all vertices in
  the other components. The result follows easily.
\end{proof}

A graph
$G$ belongs to $\Gr(n,k,\ell)$ if and only if every $k$-subset
intersects every symmetric difference of the neighbourhoods of two
sets that are of size at most $\ell$. 
Equivalently, $G\in \Gr(n,k,\ell)$ if and only if the
complement of every such  symmetric difference of two neighbourhoods 
contains less than $k$ vertices. We state this as a theorem.

\begin{thm}\label{karakterisointi}
Let $G=(V,E)$ and $|V|=n$.
$G$ belongs to $\Gr(n,k,\ell)$ if and only if 
\begin{equation}\label{eqkarak}
n-\min_{\substack{X,Y\subseteq V\\X\neq Y\\|X|,|Y|\le \ell}}
\{|N[X] \symdiff N[Y]|\}\le k-1.
\end{equation}
\end{thm}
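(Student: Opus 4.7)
The statement is essentially a direct translation of the definition into a statement about complements and sizes, so the plan is to unwind the definitions carefully and use the fact that a set intersects every $k$-subset of $V$ iff its complement is too small to contain one.

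First I would rewrite the identification condition set-theoretically. For $C\subseteq V$ and $X\subseteq V$ we have $I(C;X) = N[X]\cap C$, so for any two sets $X,Y$
\[
I(C;X) \symdiff I(C;Y) \;=\; C\cap \bigl(N[X]\symdiff N[Y]\bigr).
\]
Thus $I(C;X)\ne I(C;Y)$ is equivalent to $C\cap\bigl(N[X]\symdiff N[Y]\bigr)\ne\emptyset$. Consequently, a subset $C$ is $(1,\lex\ell)$-identifying iff $C$ meets every set of the form $N[X]\symdiff N[Y]$ with $X\ne Y$ and $|X|,|Y|\le\ell$.

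Next I would translate "every $k$-subset of $V$ meets $S$" into a size condition on $S$. A $k$-subset $C$ fails to meet $S$ iff $C\subseteq V\setminus S$, and such a $C$ exists iff $|V\setminus S|\ge k$. Hence every $k$-subset of $V$ meets $S$ iff $|V\setminus S|\le k-1$, iff $n-|S|\le k-1$.

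Applying this with $S=N[X]\symdiff N[Y]$, ranging over all admissible pairs $(X,Y)$, gives that $G\in\Gr(n,k,\ell)$ iff $n-|N[X]\symdiff N[Y]|\le k-1$ for every such pair, which is exactly \eqref{eqkarak} after taking the maximum of the left side (equivalently, the minimum of $|N[X]\symdiff N[Y]|$) over $X\ne Y$ with $|X|,|Y|\le\ell$. There is no real obstacle here: the only care needed is to keep the universal quantifier over pairs $(X,Y)$ straight when passing to the minimum, and to check that the equivalence goes in both directions at each step.
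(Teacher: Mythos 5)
Your proposal is correct and follows exactly the route the paper takes: the paper derives \eqref{eqkarak} by the same two observations, namely that $I(C;X)\ne I(C;Y)$ means $C$ meets $N[X]\symdiff N[Y]$, and that every $k$-subset meets a set precisely when its complement has fewer than $k$ elements. The identity $I(C;X)\symdiff I(C;Y)=C\cap\bigl(N[X]\symdiff N[Y]\bigr)$ that you make explicit is the only substantive step, and it is valid.
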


Now take $\ell=1$, and consider $\Gr(n,k)$. The characterization 
in Theorem \ref{karakterisointi} can be written as follows, since $X$
and $Y$ either are empty or singletons. 

\begin{cor}\label{Cekaehto}
Let $G=(V,E)$ and $|V|=n$.
$G$ belongs to $\Gr(n,k)$ if and only if 
\begin{romenumerate}
\item$\delta_G\ge n-k$, and \label{ekaehto}
\item$\max_{x,y\in V,\;x\neq y}\{|N[x]\cap N[y]|+
 |V\setminus(N[x]\cup N[y])|\}\le k-1$.\label{tokaehto} 
\end{romenumerate}

In particular, if $G\in \Gr(n,k)$ then
every vertex is dominated by every choice of a
$k$-subset, and
for all distinct $x,y\in V$ we have
%\begin{equation}\label{leikkaus}
$ |N[x]\cap N[y]|\le k-1$.
%\end{equation} 
\end{cor}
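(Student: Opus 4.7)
The plan is to specialize \refT{karakterisointi} to $\ell=1$ and enumerate what the minimization in \eqref{eqkarak} becomes. Every pair $X,Y\subseteq V$ with $X\ne Y$ and $|X|,|Y|\le 1$ falls, up to order, into exactly one of two cases: either one of the two sets is empty and the other is a singleton, or both are distinct singletons $\{x\}$ and $\{y\}$.

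First I would dispose of the case where one set is empty. If $X=\emptyset$ and $Y=\{y\}$, then $N[X]\symdiff N[Y]=N[y]$, so $n-|N[X]\symdiff N[Y]|=n-|N[y]|=n-1-\deg(y)$. Demanding this be at most $k-1$ for every $y$ amounts to $\deg(y)\ge n-k$ for all $y\in V$, which is precisely \ref{ekaehto}.

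Next I would handle two distinct singletons $\{x\}$ and $\{y\}$. The key observation is the disjoint decomposition
\[V\setminus(N[x]\symdiff N[y])=\bigl(N[x]\cap N[y]\bigr)\cup\bigl(V\setminus(N[x]\cup N[y])\bigr),\]
which yields $n-|N[x]\symdiff N[y]|=|N[x]\cap N[y]|+|V\setminus(N[x]\cup N[y])|$. Requiring this to be at most $k-1$ for all $x\ne y$ is exactly \ref{tokaehto}. Combining the two cases with \refT{karakterisointi} produces the equivalence.

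For the final assertions, \ref{ekaehto} gives $|N[v]|\ge n-k+1$ for every vertex $v$, so any $k$-subset $C$ satisfies $|N[v]|+|C|\ge n+1$; hence $N[v]\cap C\ne\emptyset$ and $v$ is dominated. The bound $|N[x]\cap N[y]|\le k-1$ is immediate from \ref{tokaehto} since the second summand on the left-hand side is non-negative. No substantial obstacle arises here: the whole corollary is essentially a bookkeeping unpacking of the general characterization in \refT{karakterisointi}, with the slightly non-obvious step being only the identification of the empty-versus-singleton case with the degree condition.
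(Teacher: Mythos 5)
Your proposal is correct and follows exactly the route the paper intends: the paper gives no separate proof of \refC{Cekaehto}, merely noting that it is \refT{karakterisointi} specialized to $\ell=1$ with $X,Y$ empty or singletons, and your case analysis (empty-vs-singleton giving the degree condition, two singletons giving the complement decomposition of $N[x]\symdiff N[y]$) is precisely that unpacking, with the final assertions handled correctly as well.
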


\begin{ex}
  \label{Ecube}
Let $G$ be the 3-dimensional cube, with 8 vertices.
Then $|N[x]|=4$ for every vertex $x$, and $|N[x]\symdiff N[y]|$ is
$4$ when $d(x,y)=1$, 4 when $d(x,y)=2$, and 8 when $d(x,y)=3$. Hence,
\refT{karakterisointi} shows that $G\in\Gr(8,5)$.
\end{ex}

\begin{lemma}
\label{Ladd1}
Let $G_0=(V_0,E_0)\in\Gr(n_0,k_0)$
and let $G=(V_0\cup\{a\},E_0\cup\{\{a,x\}\mid x\in V_0\})$ for a new vertex
$a\notin V_0$. In  words, we add a vertex and connect it to all other
vertices.
Then $G\in\Gr(n_0+1,k_0+1)$ if (and only if) $|N_{G_0}[x]|\le k_0-1$
for every $x\in V_0$, or,
equivalently, $\Delta_{G_0}\le k_0-2$.
\end{lemma}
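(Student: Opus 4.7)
The natural approach is to apply the characterization in Corollary~\ref{Cekaehto} to $G$, with $n=n_0+1$ and $k=k_0+1$, and compare each of the two resulting conditions to what we already know about $G_0$. Since $a$ is adjacent to every vertex, the structural change is very controlled: $N_G[a]=V$, and $N_G[x]=N_{G_0}[x]\cup\{a\}$ for every $x\in V_0$, so $\deg_G(x)=\deg_{G_0}(x)+1$ and $\deg_G(a)=n_0$. Thus $\delta_G\ge\delta_{G_0}+1\ge(n_0-k_0)+1>n-k$, so condition~\ref{ekaehto} of Corollary~\ref{Cekaehto} is automatic.

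The real content lies in condition~\ref{tokaehto}, which needs to hold for every pair of distinct vertices of $G$. The plan is to split into two cases depending on whether $a$ is one of the two vertices. If both $x,y\in V_0$, then $N_G[x]\cap N_G[y]=(N_{G_0}[x]\cap N_{G_0}[y])\cup\{a\}$, while $V\setminus(N_G[x]\cup N_G[y])=V_0\setminus(N_{G_0}[x]\cup N_{G_0}[y])$; the sum in~\ref{tokaehto} therefore equals the corresponding sum for $G_0$ plus $1$, and is bounded by $(k_0-1)+1=k_0=k-1$ because $G_0\in\Gr(n_0,k_0)$. So pairs of old vertices give no new constraint.

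The only remaining case is when one of the two vertices is $a$; here $N_G[x]\cup N_G[a]=V$, so the complement contributes $0$, and $N_G[x]\cap N_G[a]=N_G[x]=N_{G_0}[x]\cup\{a\}$. Condition~\ref{tokaehto} thus reduces to $|N_{G_0}[x]|+1\le k_0$, i.e.\ $|N_{G_0}[x]|\le k_0-1$ for every $x\in V_0$, which is the same as $\Delta_{G_0}\le k_0-2$. Combining the two cases yields that all of Corollary~\ref{Cekaehto}'s conditions hold for $G$ precisely when the stated degree bound is satisfied.

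There is no real obstacle here — the argument is a direct bookkeeping check using Corollary~\ref{Cekaehto}. The only point worth flagging is that the hypothesis $G_0\in\Gr(n_0,k_0)$ alone is not enough: the pairs $\{a,x\}$ introduce a genuinely new requirement, which is exactly the maximum-degree condition appearing in the statement, and this is why that condition is both necessary and sufficient.
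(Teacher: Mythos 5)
Your proof is correct and follows exactly the route the paper intends: the paper's own ``proof'' is simply the remark that the lemma is an immediate consequence of Theorem~\ref{karakterisointi} (or Corollary~\ref{Cekaehto}), and your case analysis is precisely the bookkeeping that remark leaves to the reader. The verification of both conditions of Corollary~\ref{Cekaehto} — and the observation that only the pairs involving $a$ produce the new constraint $\Delta_{G_0}\le k_0-2$ — is accurate.
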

\begin{proof}
  An immediate consequence of 
\refT{karakterisointi} (or \refC{Cekaehto}).
\end{proof}

\begin{ex}
  \label{Ebcc}
If $G_0$ is the 3-dimensional cube in \refE{Ecube}, which belongs to
$\Gr(8,5)$ and is regular with degree $3=5-2$, then \refL{Ladd1}
yields a graph $G\in\Gr(9,6)$. $G$ can be regarded as a cube with centre.
\end{ex}

Suppose $G=(V,E)$ belongs to $\Gr(n,k)$. 
Corollary \ref{Cekaehto}\ref{ekaehto} implies that for all $x\in V$,
$n-|N[x]|\le k-1$. On the other hand, 
\refL{Ladd1} shows that
there is not a positive lower bound for
$n-|N[x]|$, since the graph $G=(V,E)$ constructed there has a vertex $a$
such that $N[a]=V$. Arbitrarily large graphs $G_0$ satisfying the
conditions in \refL{Ladd1} are, for example, given by the Paley graphs
$P(q)$,
%(with $n=q+1$, $k=(q+5)/2$), 
see Section~\ref{sec:strong}.  

We now easily obtain our first upper bound (which will be improved later) on
the order of a graph such that every $k$-vertex set is identifying.

\begin{thm}\label{Tfirstupperbound}
  If $k\ge2$ and $n>3k-3$, then there is no graph in $\Gr(n,k)$.
In other words, $\NN(k)\le 3k-3$ when $k\ge2$.
\end{thm}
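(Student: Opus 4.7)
The plan is to derive the bound from \refC{Cekaehto} by combining the degree condition with the intersection bound on any pair of distinct vertices. Since $k\ge 2$ and $n>3k-3$ force $n\ge 4$, at least two distinct vertices $x,y\in V$ are available in any hypothetical $G\in\Gr(n,k)$.

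I would first apply \refC{Cekaehto}\ref{ekaehto} to each of $x,y$ separately, obtaining $|N[x]|,|N[y]|\ge n-k+1$. Next, the displayed consequence at the end of \refC{Cekaehto} (which is simply part \ref{tokaehto} with the nonnegative term $|V\setminus(N[x]\cup N[y])|$ discarded) yields $|N[x]\cap N[y]|\le k-1$. Combining these with the trivial bound $|N[x]\cup N[y]|\le n$ via inclusion--exclusion gives
\[
n\ge |N[x]\cup N[y]|=|N[x]|+|N[y]|-|N[x]\cap N[y]|\ge 2(n-k+1)-(k-1)=2n-3k+3,
\]
so $n\le 3k-3$, contradicting $n>3k-3$. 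Hence $\Gr(n,k)=\emptyset$ whenever $k\ge 2$ and $n>3k-3$, which is equivalent to the stated bound $\NN(k)\le 3k-3$.

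There is essentially no obstacle here: the argument is a two-line computation once \refC{Cekaehto} is in hand, and it uses nothing more sophisticated than inclusion--exclusion applied to one well-chosen pair of vertices. The only thing worth noting is that one does not need to use the full strength of \refC{Cekaehto}\ref{tokaehto} --- the weaker bound $|N[x]\cap N[y]|\le k-1$ suffices --- which is precisely why the authors flag this bound as a ``first'' estimate to be sharpened later in \refS{Supper} by exploiting the discarded term and connections to error-correcting codes.
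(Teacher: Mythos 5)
Your proof is correct and is essentially the paper's own argument: the paper also fixes two distinct vertices and combines the degree bound $|N[x]|,|N[y]|\ge n-k+1$ from \refC{Cekaehto}\ref{ekaehto} with condition \ref{tokaehto}, merely phrasing the count via $n-k+1\le|N[x]\symdiff N[y]|\le 2(k-1)$ instead of your inclusion--exclusion on $|N[x]\cup N[y]|$. The two computations are trivial rearrangements of one another, so there is nothing further to flag.
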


\begin{proof}
Suppose $G\in \Gr(n,k)$ with $n\ge2$.
  Pick two distinct vertices $x$ and $y$.
By Corollary \ref{Cekaehto}\ref{ekaehto},
$|N[x]|,|N[y]|\ge n-k+1$ and thus 
\begin{equation*}
|N[x]\symdiff N[y]|\le 
|V \setminus N[x]|  + |V \setminus N[y]| 
\le k-1 + k-1=2k-2.
\end{equation*}
Consequently, 
Theorem~\ref{karakterisointi}
yields $n\le 2k-2+k-1=3k-3$. 
\end{proof}

As a corollary, $\Gr(k)$ is a finite set of graphs for every $k$.

\section{Small $k$}\label{Ssmall}

\begin{ex}\label{exgr1}
For $k=1$,   it is easily seen that $\Gr(n,1)=\emptyset$ for $n\ge2$,
and thus $\Gr(1)=\set{K_1}$ and $\NN(1)=1$.
\end{ex}

\begin{ex}\label{exgr2}
Let $k=2$.
If $G\in \Gr(2)$, then $G$ cannot contain any edge $xy$, since then
$N[x]\cap\set{x,y}=\set{x,y}=N[y]\cap\set{x,y}$, so \set{x,y} does not
separate $\set{x}$ and $\set{y}$.
Consequently, $G$ has to be an empty graph $E_n$, and then
$\delta_G=0$ and Corollary \ref{Cekaehto}\ref{ekaehto} 
(or Example \ref{exa}\ref{exaE}) shows that
$n=k=2$.
Thus $\Gr(2)=\set{E_2}$ and $\NN(2)=2$.
\end{ex}

\begin{ex}\label{exgr3}
Let $k=3$.
First, assume $n=|G|=3$. There are only four graphs $G$ with $|G|=3$,
and it is easily checked that $E_3,P_3\in\Gr(3,3)$ 
(\refE{exa}\ref{exaE}\ref{exaP}), while $C_3=K_3\notin\Gr(3,3)$
(\refE{exa}\ref{exaK}) and a disjoint union $K_1\cupx K_2\notin\Gr(3,3)$,
for example by \refL{Lcomp} since $K_2\notin\Gr(2,2)$.
Hence $\Gr(3,3)=\set{E_3,P_3}$.

Next, assume $n\ge4$. Since there are no graphs in $\Gr(n_1,k_1)$ if
$n_1>k_1$ and $k_1\le2$, it follows from \refL{Lcomp} that there are
no disconnected graphs in $\Gr(n,3)$ for $n\ge4$.
Furthermore, if $G\in\Gr(n,3)$, then  every induced
subgraph with 3 vertices is in $\Gr(3,3)$ and is thus $E_3$ or $P_3$;
in particular, $G$ contains no triangle.

If $G\in\Gr(4,3)$, 
it follows easily that $G$ must be $C_4$ or $S_4$, and indeed these
belong to $\Gr(4,3)$ by \refE{exa}\ref{exaC}\ref{exaS}. Hence
$\Gr(4,3)=\set{C_4,S_4}$. %(Thus $P_4\notin\Gr(4,3)$, 

Next, assume $G\in\Gr(5,3)$. Then every induced subgraph with 4
vertices is in $\Gr(4,3)$ and is thus $C_4$ or $S_4$. Moreover, by
\refC{Cekaehto}, $\delta_G\ge5-3=2$. 
However, if we add a vertex to $C_4$ or $S_4$ 
such that the degree condition $\delta_G\ge2$ is satisfied and we do not
create a triangle we get $K_{2,3}$ -- a complete bipartite graph, and
we know already $K_{2,3}\not\in \Gr(5,3)$
(\refE{exa}\ref{exaKK}). Consequently $\Gr(5,3)=\emptyset$, and thus
$\Gr(n,3)=\emptyset$ for all $n\ge5$ by \refL{L1}\ref{L1c}.

Consequently, $\Gr(3)=\Gr(3,3)\cup\Gr(4,3)=\set{E_3,P_3,S_4,C_4}$ and
$\NN(3)=4$. 
\end{ex}

\begin{ex}\label{exgr4}
Let $k=4$.
First, it follows easily from \refL{Lcomp} and the descriptions of
$\Gr(j)$ for $j\le3$ above that the only disconnected graphs in
$\Gr(4)$ are $E_4$ and the disjoint union $P_3\cupx K_1$;
in particular, every graph in $\Gr(n,4)$ with $n\ge5$ is connected.

Next, if $G\in \Gr(n,4)$,
there cannot be a triangle in
 $G$ because otherwise if a 4-subset includes the vertices of a
 triangle, one more vertex cannot separate the vertices of the
 triangle from each other. 
(Cf.\ \refL{Ltriangleupperbound}.)
%Further, also the graphs  $(\{1,2,3,4\},\{\{1,2\}\})$ and
%$(\{1,2,3,4\},\{\{1,2\},\{3,4\}\})$ are forbidden as induced
%subgraphs, since they do not belong to $\Gr(4,4). 

For $n=4$, the only connected graphs of order 4 that do not contain a
triangle are $C_4$, $P_4$ and $S_4$, and these
belong to $\Gr(4,4)$ by \refE{exa}\ref{exaC}\ref{exaP}\ref{exaS}. Hence
$\Gr(4,4)=\set{C_4,P_4,S_4,E_4,P_3\cupx K_1}$.

Now assume that $G\in\Gr(n,4)$ with $n\ge 5$.

(i) Suppose first that a graph $K_1\cupx K_2=(\{x,y,z\},\{\{x,y\}\})$ 
is an induced subgraph of $G$. Then all the other vertices of $G$ are
adjacent to either $x$ or $y$ but not both, since otherwise there
would be an induced triangle or an induced $E_2\cupx K_2$ or $K_2\cupx
K_2$, and these do not belong to $\Gr(4,4)$. Let
$A=N(x)\setminus \{y\}$ and $B=N(y)\setminus \{x\}$, so we have a
partition of the vertex set as $\set{x,y,z}\cup A \cup B$. There can be
further edges between $A$ and $B$, $z$ and $A$, $z$ and $B$ but not
inside $A$ and $B$. Let $A=A_0\cup A_1$ and $B=B_0\cup B_1$, where
$A_1=\{a\in A\mid a\sim z\}$, $A_0=A\setminus A_1$ and $B_1=\{b\in
B\mid b\sim z\}$, $B_0=B\setminus B_1$. 
If
$a\in A_0$ and
$b\in B$, then the 4-subset $\{a,b,x,z\}$ does not distinguish $a$ and $x$
unless $a\sim b$. Similarly, if $a\in A$ and $b\in B_0$, then $a\sim
b$. On the other hand, if $a\in A_1$ 
and $b\in B_1$, then $a\not\sim b$, since otherwise $abz$ would be a
triangle. 
Thus, we have, where one or more of the sets $A_0,A_1,B_0,B_1$ might
be empty,
\begin{center}
\begin{tabular}{ccc}
\includegraphics[height=2cm]{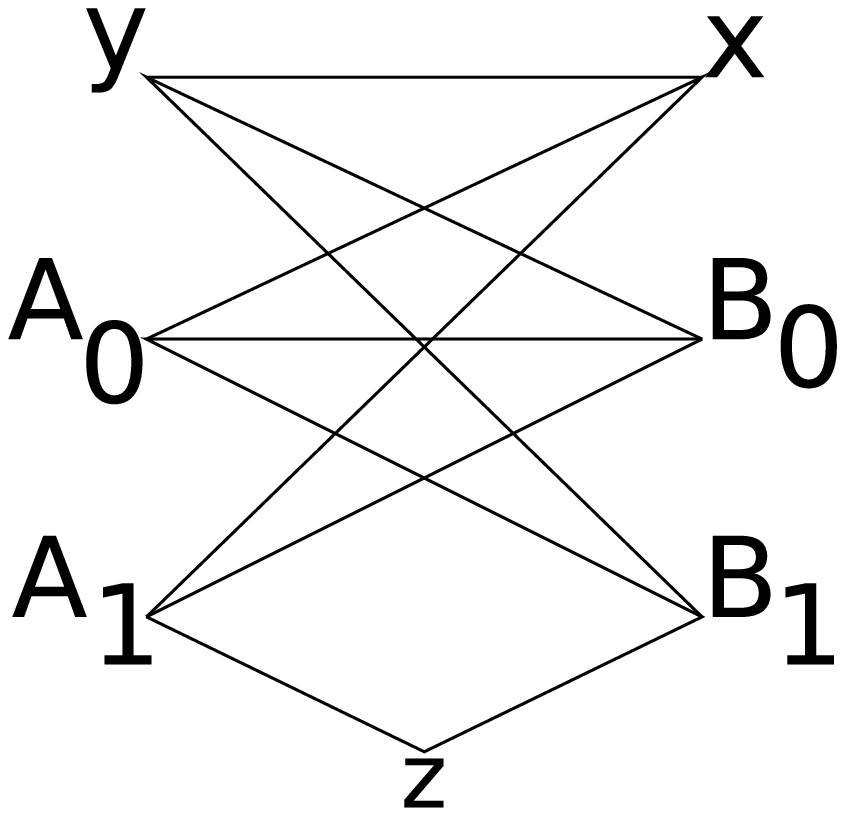}
& $=$ &
\includegraphics[width=2.5cm]{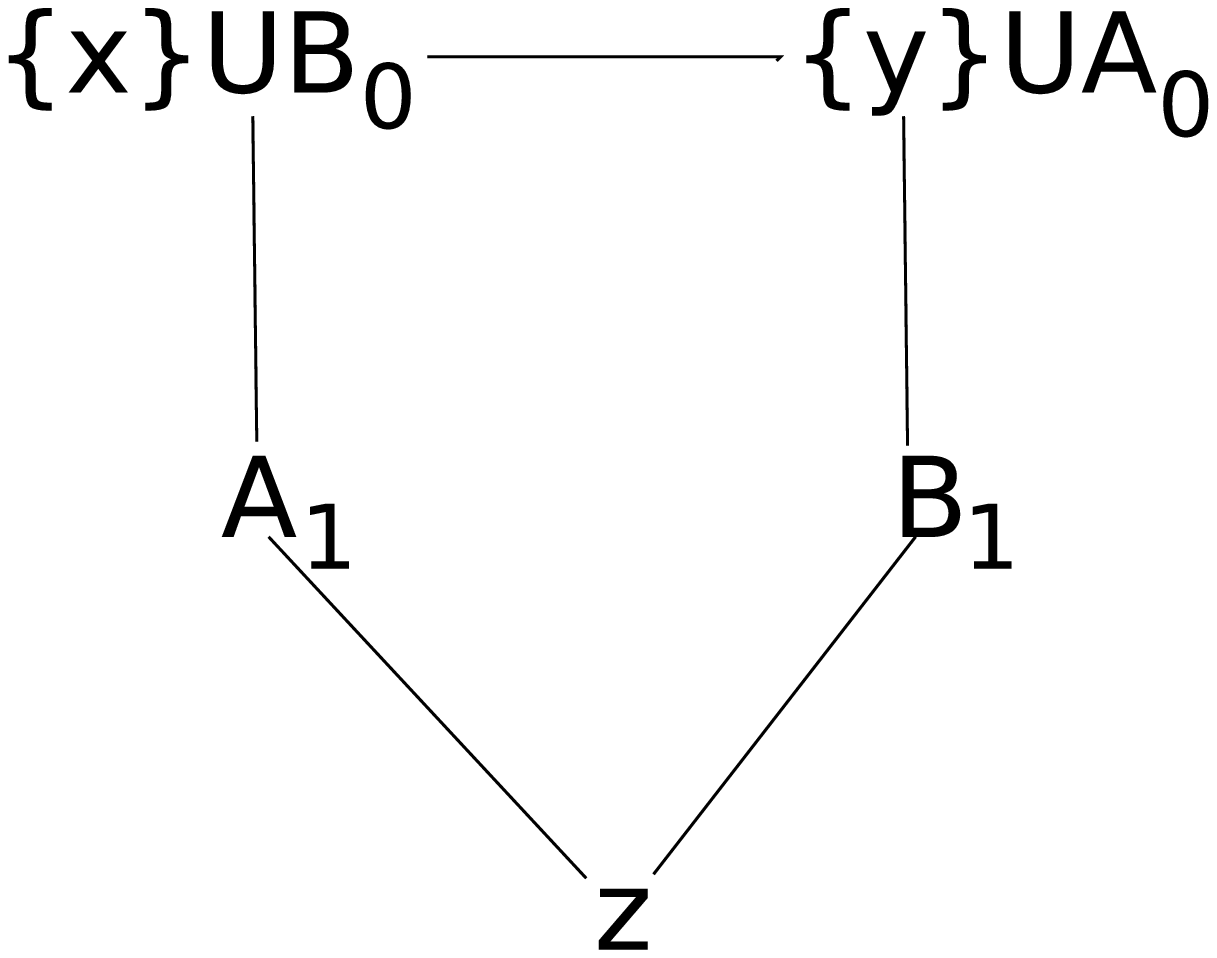}
\end{tabular}
\end{center}
where an edge is a complete bipartite graph on sets incident to it,
and there are no edges inside these sets.

If $n\ge 6$, then there are at least two elements in one of the sets
$\{x\}\cup B_0$, $\{y\}\cup A_0$, $A_1$ or $B_1$. However, these two
vertices have the same neighbourhood and hence they cannot be
separated by the other $n-2\ge 4$ vertices. Thus, $n=5$. 

If
$n=5$, and both $A_1$ and $B_1$ are non-empty, we must have
$A_0=B_0=\emptyset$ and $G=C_5$, which is in $\Gr(5,4)$ by
\refE{exa}\ref{exaC}. 

Finally, assume $n=5$ and $A_1=\emptyset$ (the case $B_1=\emptyset$
is the same after relabelling). Then $B_1$ is non-empty, since $G$ is
connected. If $B_0$ is non-empty, let $b_0\in B_0$ and  $b_1\in B_1$,
and observe that \set{x,b_0,b_1,z} does not separate $z$ and $b_1$.
Hence $B_0=\emptyset$. We thus have either $|A_0|=1$ and $|B_1|=1$, or
$|A_1|=0$ and $|B_1|=2$, and both cases yield
the graph (d) in Figure~\ref{Figgr54}, which easily is seen to be in
$\Gr(5,4)$. 

(ii) Suppose that there is no induced subgraph $K_1\cupx K_2$.
Since $G$ is connected, we can find an edge $x\sim y$. 
Let, as above, 
$A=N(x)\setminus \{y\}$ and $B=N(y)\setminus \{x\}$.
If $a\in A$ and $b\in B$ and $a\not\sim
b$, then $(\{a,x,b\},\{\{a,x\}\})$ is an induced subgraph and we are back
in case (i). Hence, all edges between
sets $A$ and $B$ exist and thus, recalling that $G$ has no triangles,
$G$ is the complete bipartite graph
with bipartition $(A\cup\set y,B\cup\set x)$.
By
Example~\ref{exa}\ref{exaKK}, then $n\le 5$. If $n=5$, we get $G=K_{2,3}$
or $G=K_{1,4}=S_4$, which both belong to $\Gr(5,4)$ by \refE{exa}\ref{exaKK}.
\end{ex}

We summarize the result in a theorem.

\begin{thm}\label{exgr54}
$\NN(4)=5$. 
More precisely, $\Gr(4)=\Gr(4,4)\cup\Gr(5,4)$, where
$\Gr(4,4)=\set{C_4,P_4,S_4,E_4,P_3\cupx K_1}$ and
$\Gr(5,4)$ consists of the four graphs in
 Figure~\ref{Figgr54}. 
\end{thm}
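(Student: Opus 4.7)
The plan is to organize the case analysis of Example~\ref{exgr4} into a clean proof: the theorem is essentially its summary. First I would handle disconnected graphs in $\Gr(n,4)$ via \refL{Lcomp}: if $G$ has components $G_i$ of order $n_i$, then $G_i \in \Gr(n_i, n_i - n + 4)$, and the explicit lists of $\Gr(j)$ for $j \le 3$ from Examples~\ref{exgr1}--\ref{exgr3} (combined with $1 \le k_i \le n_i$) admit only $E_4$ and $P_3 \cupx K_1$, both of order $4$.

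Next I would show every connected $G \in \Gr(n,4)$ with $n \ge 4$ is triangle-free: if $x,y,z$ form a triangle, then for any fourth vertex $w$ the $4$-set $\{x,y,z,w\}$ gives identification sets for $x,y,z$ that pairwise differ only in whether $w$ is adjacent to the vertex, so two of the three must coincide. For $n=4$ the connected triangle-free graphs are $C_4, P_4, S_4$, all in $\Gr(4,4)$ by \refE{exa}\ref{exaC}\ref{exaP}\ref{exaS}, yielding the claimed description of $\Gr(4,4)$.

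For $n \ge 5$ I would split on whether $G$ contains an induced $K_1 \cupx K_2$. In the affirmative case, fixing such a configuration $(\{x,y,z\},\{xy\})$, set $A = N(x)\setminus\{y\}$, $B = N(y)\setminus\{x\}$, and partition $A = A_0 \cupx A_1$, $B = B_0 \cupx B_1$ by adjacency to $z$. The constraints from Example~\ref{exgr4}(i) --- no induced triangle together with every $4$-subset being identifying --- force prescribed complete bipartite edges between certain pairs of parts and no edges within any part. A twin-vertex observation (two vertices with identical closed neighbourhoods cannot be separated by the other $n-2$) then rules out $n \ge 6$ and, for $n=5$, leaves precisely $C_5$ and the graph (d) of Figure~\ref{Figgr54}. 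In the negative case, the absence of an induced $K_1 \cupx K_2$ together with connectedness and triangle-freeness forces $G$ to be complete bipartite, and \refE{exa}\ref{exaKK} then leaves $K_{2,3}$ and $K_{1,4} = S_4$. Together these give the four graphs of $\Gr(5,4)$, and \refL{L1}\ref{L1c} disposes of all $n \ge 6$.

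The main obstacle is the bookkeeping in the induced-$K_1 \cupx K_2$ case: tracking which edges between the five parts $\{x,y,z\}, A_0, A_1, B_0, B_1$ are forced or forbidden by the $4$-subset identification condition, and then running the twin-vertex argument to cap $n$ at $5$. This is precisely what Example~\ref{exgr4}(i) spells out, so the proof of the theorem amounts to invoking that case analysis together with \refL{Lcomp} and \refL{L1}\ref{L1c}.
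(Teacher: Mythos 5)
Your proposal is correct and follows essentially the same route as the paper: the theorem is stated as a summary of the case analysis in Example~\ref{exgr4}, and your outline reproduces exactly that analysis (disconnected graphs via \refL{Lcomp}, triangle-freeness, the dichotomy on an induced $K_1\cupx K_2$, the twin-vertex bound, and the complete bipartite endgame). No gaps.
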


\begin{figure}\caption{All the different graphs in $\Gr(5,4)$.}\label{Figgr54}
\begin{center}
\begin{tabular}{cccc}
a) \includegraphics[width=2cm]{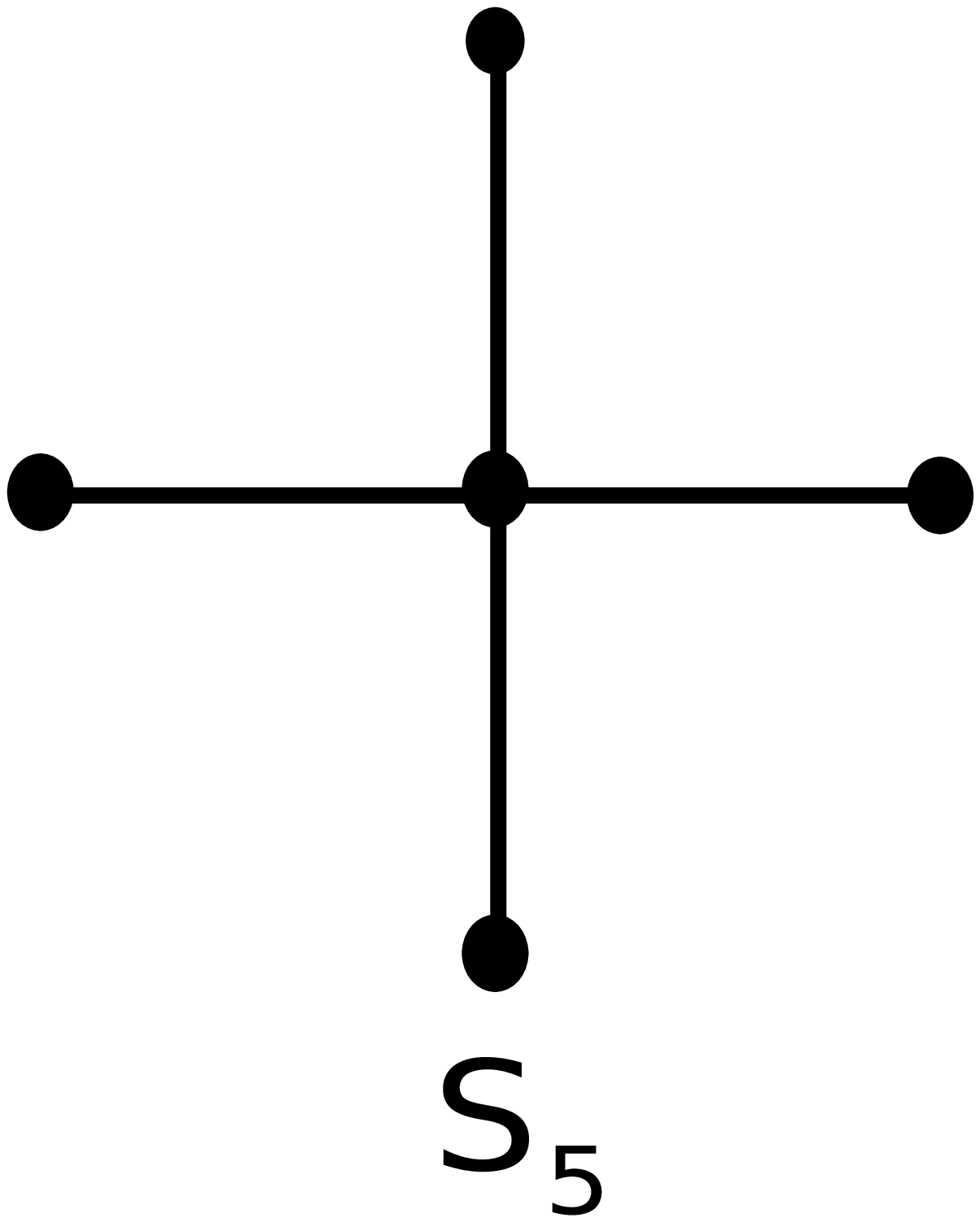}
&
b) \includegraphics[width=2cm]{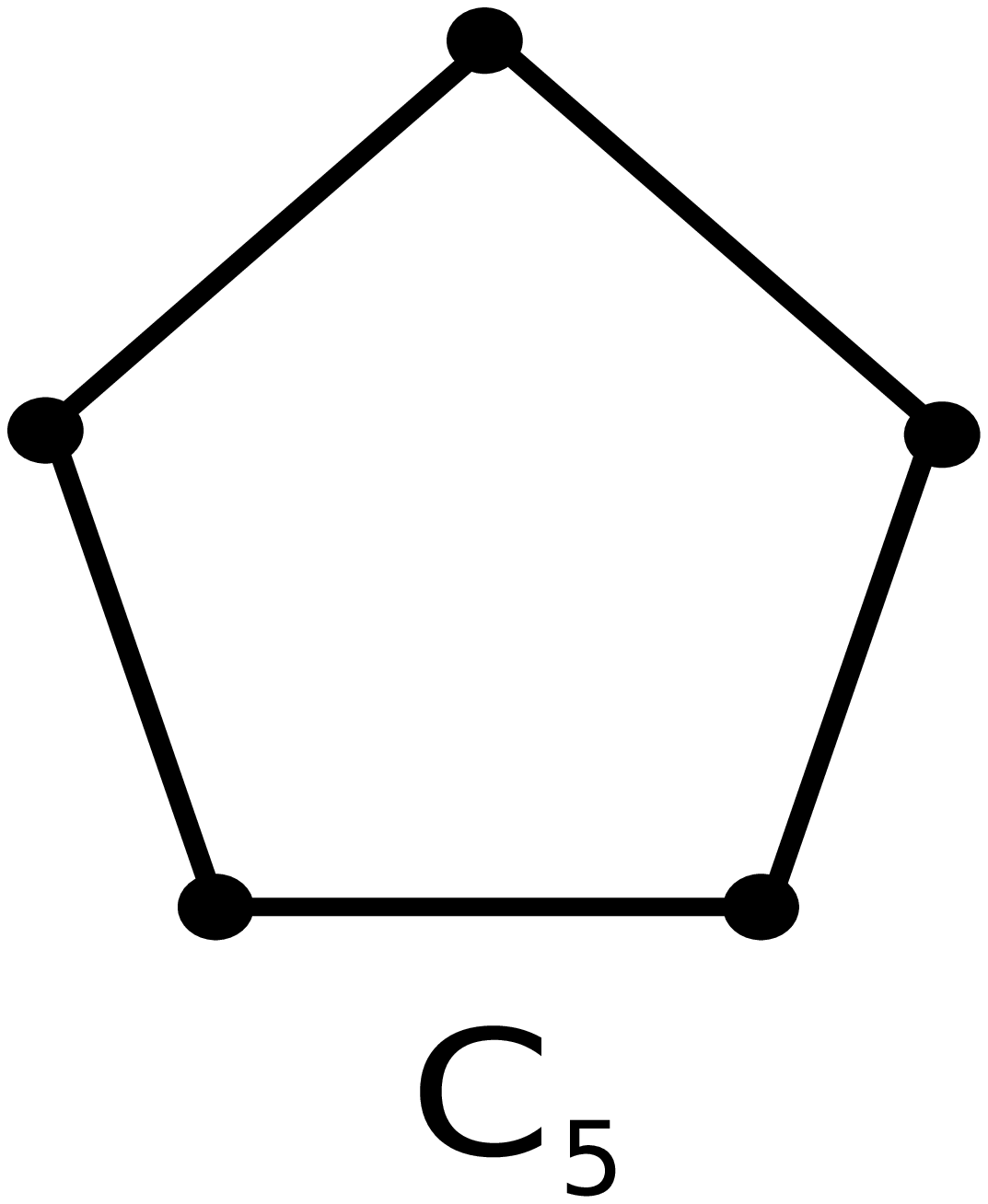}
&
c) \includegraphics[height=2.5cm]{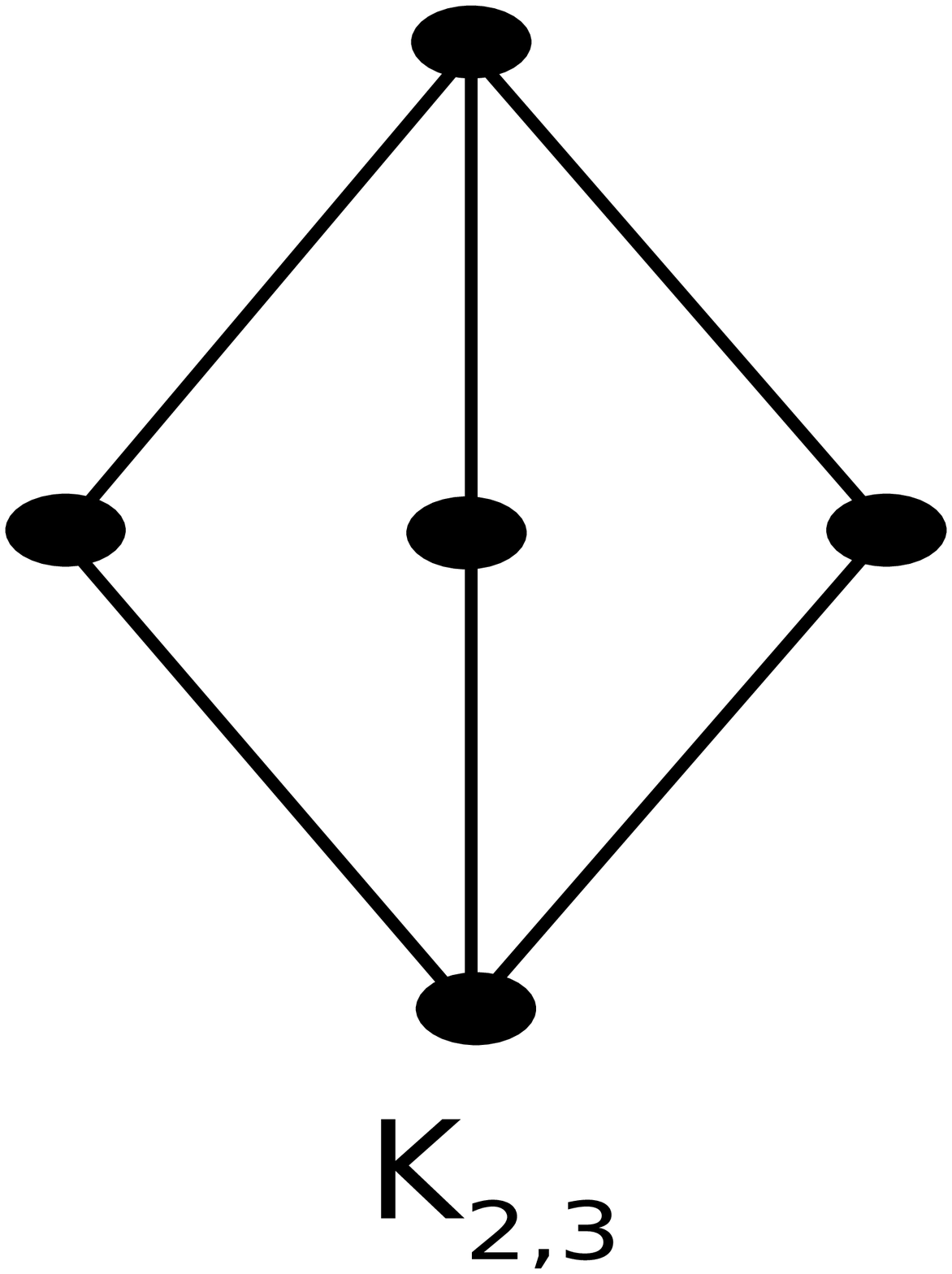}
&
d) \includegraphics[height=2.5cm]{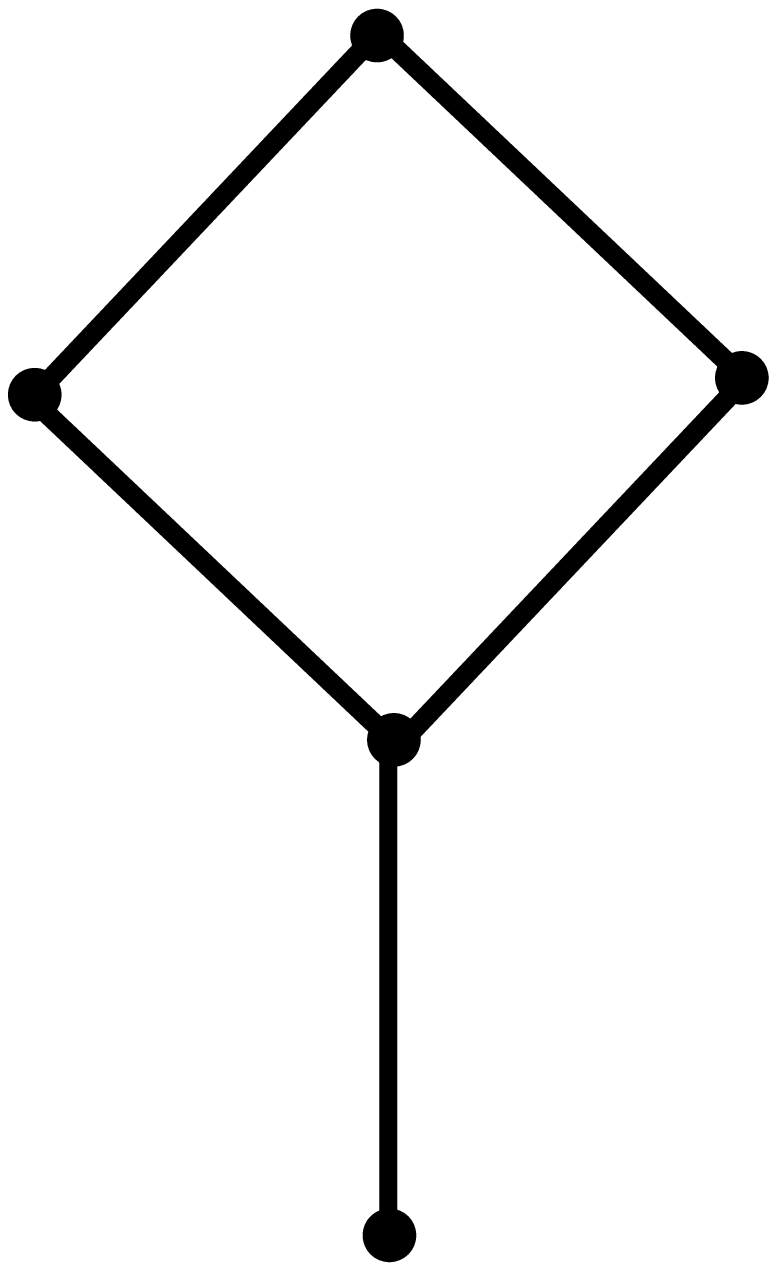}
\end{tabular}
\end{center}
\end{figure}

For $k=5$ and 6, we do not describe $\Gr(k)$ completely, but we find
$\NN(k)$, using some results that will be proved in \refS{Supper}.
Upper and lower bounds for some other values of  $k$ are given in
Table~\ref{tab}.

\begin{thm}\label{T5}\label{T6}
  $\NN(5)=8$, $\NN(6)=9$ and $11\le \NN(7)\le 12$.
\end{thm}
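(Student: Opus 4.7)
The plan is to treat the three claims $\NN(5)=8$, $\NN(6)=9$, and $11\le\NN(7)\le 12$ separately. The upper bounds $\NN(5)\le 8$ and $\NN(7)\le 12$ follow directly from \refT{T2}; the lower bounds $\NN(5)\ge 8$ and $\NN(6)\ge 9$ are given by the $3$-cube (\refE{Ecube}) and the cube-with-centre (\refE{Ebcc}). The two non-routine pieces are then (a) improving $\NN(6)\le 10$ to $\NN(6)\le 9$ by ruling out $\Gr(10,6)$, and (b) exhibiting an $11$-vertex graph in $\Gr(11,7)$ for $\NN(7)\ge 11$.

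For (a), \refC{Cekaehto} specialised to $(n,k)=(10,6)$ gives $\delta_G\ge 4$ and $2|N[x]\cap N[y]|\le\deg x+\deg y-3$ for every pair $x\ne y$, which translates into
\[
|N(x)\cap N(y)|\le\tfrac{\deg x+\deg y-7}{2}\ (x\sim y),\qquad |N(x)\cap N(y)|\le\tfrac{\deg x+\deg y-3}{2}\ (x\not\sim y).
\]
Summing over unordered pairs and equating to the path count $\sum_v\binom{\deg v}{2}$ yields an inequality on the degree sequence. Every $d$-regular case with $d\in\{4,\ldots,8\}$ fails by direct substitution, and every mixed-degree sequence is eliminated by the same bound except the tight one with six degree-$4$ vertices (set $A$) and four degree-$5$ vertices (set $B$). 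In this tight case, a per-vertex version of the same identity forces each $x\in A$ to have exactly $2$ neighbours in $A$ and $2$ in $B$, and each $x\in B$ to have exactly $3$ in $A$ and $2$ in $B$; hence the $A$-subgraph is a triangle-free $2$-regular graph on $6$ vertices, i.e.\ $C_6$, and the $B$-subgraph is $C_4$. Enforcing the tight common-neighbour counts then forces consecutive $C_6$-vertices to have disjoint $B$-neighbourhoods, whence some pair of $B$-vertices shares $3$ common neighbours in $A$; combined with the common $B$-neighbours from $C_4$, this is incompatible with the required common-neighbour count ($1$ if the $B$-pair is adjacent in $C_4$, $3$ if not).

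For (b), \refL{Ladd1} applied to $P(9)\in\Gr(9,6)$ only reaches $\Gr(10,7)$, so I construct $G\in\Gr(11,7)$ directly. Take the natural $3$-colouring $C_0,C_1,C_2$ of $P(9)$ (three classes of size $3$), adjoin two new vertices $u,v$ with $u\not\sim v$, and join $u$ to $S_u=C_0\cup C_1$ and $v$ to $S_v=C_0\cup C_2$. Then $\delta_G=5\ge 4$. Every edge of $P(9)$ has its endpoints in two different colour classes, so at least one of the cuts $(S_u,S_u^c),(S_v,S_v^c)$ separates it; since adjacent $P(9)$-pairs have $|N_{P(9)}[x]\symdiff N_{P(9)}[y]|=4$ (from the strongly regular parameters $(9,4,1,2)$), the augmentation raises this to at least $5$. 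Non-adjacent $P(9)$-pairs already have symmetric difference $6$, and the pairs involving $u$ or $v$ are easily checked to give symmetric difference $5$, $6$, or $8$. Hence $G\in\Gr(11,7)$ by \refC{Cekaehto}. The main obstacle is locating this construction, since simpler attempts (\refL{Ladd1} from larger $k_0$, or circulant Cayley graphs on $\mathbb Z/11$) all violate the symmetric-difference condition.
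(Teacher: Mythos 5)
Your proposal is correct, but it diverges from the paper's proof in the two places where the paper itself is least self-contained, and in both cases you supply a genuinely different argument. For $\NN(6)\le 9$ the paper does not touch $\Gr(10,6)$ directly: it invokes \refT{upperbound}, i.e.\ the combination of \refT{trianglelemma} (any $G\in\Gr(n,6)$ with $n\ge 10$ contains a triangle) and \refL{Ltriangleupperbound} (a triangle forces $n\le 3k-9=9$). Your replacement is a direct count: translate \refC{Cekaehto}\ref{tokaehto} into the pairwise bounds $|N(x)\cap N(y)|\le\lfloor(\deg x+\deg y-7)/2\rfloor$ (adjacent) and $\lfloor(\deg x+\deg y-3)/2\rfloor$ (non-adjacent), compare with $\sum_v\binom{\deg v}{2}$, and analyse the unique tight degree sequence $(4^6,5^4)$. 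I checked this: the inequality reduces to $\sum_v(\deg v-4)^2\le 25-\binom{a}{2}-\binom{b}{2}$ with $a,b$ the numbers of even- and odd-degree vertices, which indeed leaves only $(4^6,5^4)$; the per-vertex identity $\sum_{u\ne v}|N(v)\cap N(u)|=\sum_{w\sim v}(\deg w-1)$ forces the $2{+}2$ / $3{+}2$ neighbour splits; and the alternation of $B$-neighbourhoods around the $C_6$ produces two $B$-vertices with three common $A$-neighbours, contradicting both the adjacent bound ($1$) and the non-adjacent bound ($3$, of which $2$ are already used inside $C_4$). For $\NN(7)\ge 11$ the paper only exhibits a computer-found graph (Figure~\ref{figGraph}); your augmentation of $P(9)$ by two vertices attached to unions of two colour classes is an explicit, checkable construction, and I verified all pair types (the key facts being that each vertex of the rook's graph has exactly two neighbours in each other colour class, and that every $P(9)$-edge is separated by at least one of $u,v$). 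Your route buys self-containedness and an explicit certificate; the paper's route buys generality (\refT{upperbound} and the triangle lemma apply to all $k\ge 6$, and also give the upper bound $12$ for $k=7$).

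One point you must make explicit: the summed inequality only eliminates the regular and near-regular degree sequences because the pairwise bounds are bounds on \emph{integers}, so the floors are in force. With the real-valued bounds $\tfrac{\deg x+\deg y-7}{2}$ and $\tfrac{\deg x+\deg y-3}{2}$ as written, the $4$-regular case gives $60\le 72.5$ and the $5$-regular case $100\le 110$, and nothing is excluded; it is precisely the half-unit lost on every pair with $\deg x+\deg y$ even that makes the count close. Likewise, "every mixed-degree sequence is eliminated" deserves the one-line reduction to $\sum_v(\deg v-4)^2\le 25-\binom{a}{2}-\binom{b}{2}$ rather than an assertion.
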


\begin{proof}
First observe that $\NN(5)\ge 8$ since the 3-dimensional cube
belongs to $\Gr(8,5)$ by \refE{Ecube}. The upper bound follows from Theorem~\ref{kbound}.

\refE{Ebcc} gives an example (a centred cube) showing that
$\NN(6)\ge9$.
(Another example is given by the Paley graph $P(9)$, see
\refT{paley}.)
The upper bound is given by \refT{upperbound} in \refS{Supper}.

The construction of a graph in $\Gr(11,7)$ is given in
Figure~\ref{figGraph}. The upper bound follows both from
Theorem~\ref{upperbound} and Theorem~\ref{kbound}. 
% It remains to show that $\Gr(9,5)=\emptyset$.
%
% Thus assume $G=(V,E)\in\Gr(9,5)$; we want to show that this leads to a
% contradiction.
% By \refL{Ltriangleupperbound} in \refS{Supper},
% $G$ cannot contain a triangle.
% Further, each vertex has degree at least $9-5=4$ by
% \refC{Cekaehto}\ref{ekaehto}.
%
% Suppose first that there exists a vertex $x$ with degree $\ge5$. Let
% $B=V\setminus N[x]$; then $|B|\le 3$. Since $\delta_G\ge4$ and $G$ has
% no triangles, a vertex in $N(x)$ must be adjacent to $x$ and at least
% 3 vertices in $B$ (and no others). Hence, $|B|=3$ and $G$ is a
% complete bipartite graph $K_{5,4}$ with bipartition $(N(x),\set{x}\cup
% B)$.
% However, $K_{5,4}\notin\Gr(9,5)$ by \refE{exa}\ref{exaKK}.
%
% The remaining case is that $G$ is 4-regular. Pick $x\in V$ and again
% let $B=V\setminus\Nx$. Then $|B|=4$. Each of the 4 vertices in $N(x)$
% sends 3 edges to $B$; this gives 12 edges leaving $B$.
% Since $G$ is 4-regular, the sum of the degrees of the vertices in $B$
% is 16, so there are exactly 2 edges with both endpoints in $B$. These
% can be either adjacent or not, but in both cases it is impossible to
% connect the 12 edges from $N(x)$ and create a regular graph without triangles.
\end{proof}

% \begin{thm}\label{T6}
%  $\NN(6)=9$.
% \end{thm}
%
% \begin{proof}
% \refE{Ebcc} gives an example (a centred cube) showing that
% $\NN(6)\ge9$.
% (Another example is given by the Paley graph $P(9)$, see
% \refT{paley}.)
% The upper bound is given by \refT{upperbound} in \refS{Supper}.
% \end{proof}

% We do not know $\NN(k)$ exactly for any $k\ge7$. We have
%  $11\le\NN(7)\le 12$ by the example in Figure~\ref{figGraph} and
%  \refT{upperbound}. Upper and lower bounds for some other values of
%  $k$ are given in Table~\ref{tab}.

\begin{center}
\begin{figure}   
\caption{A graph in $\Gr(11,7)$ found by a computer search.}
\label{figGraph}
\begin{center}
\includegraphics[width=4 cm]{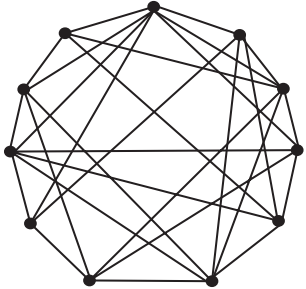}
\end{center}
\end{figure}
\end{center}

\section{Upper estimates on the order}\label{Supper}

In the next theorem we give an upper on bound on $\NN(k)$, which is
obtained using knowledge on error-correcting codes. 

\begin{thm}\label{kbound}
If $k\ge 2$, then
$\NN(k)\le 2k-2$.
\end{thm}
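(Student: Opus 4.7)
The plan is to recast the problem in coding-theoretic language and apply a Plotkin-style double counting, then squeeze out the remaining case with a parity argument. By \refL{L1}\ref{L1c}, proving $\NN(k) \le 2k-2$ reduces to showing $\Gr(2k-1,k)=\emptyset$ for every $k\ge 2$. So I would suppose toward contradiction that $G=(V,E)\in\Gr(2k-1,k)$ and work with the characteristic vectors $\chi_{N[x]}\in\{0,1\}^{2k-1}$ indexed by $V$; by \refC{Cekaehto} these all have weight at least $n-k+1=k$ and pairwise Hamming distance at least $n-k+1=k$.

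The heart of the argument is the standard Plotkin double counting
\begin{equation*}
\sum_{x\neq y}|N[x]\symdiff N[y]|
=2\sum_{j\in V}|N[j]|\bigl(n-|N[j]|\bigr),
\end{equation*}
where the identity uses the symmetry $j\in N[x]\iff x\in N[j]$. The left side is at least $n(n-1)k$ by the distance lower bound. For the right side, since $|N[j]|\ge k$ and $k>n/2$ when $n=2k-1$, each summand is maximized at $|N[j]|=k$, so the right side is at most $2nk(k-1)$. A short calculation shows both bounds equal $(2k-1)(2k-2)k$, so equality must hold throughout.

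Equality forces $|N[x]|=k$ for every $x$ (so $G$ is $(k-1)$-regular) and $|N[x]\symdiff N[y]|=k$ for every pair of distinct vertices, whence $|N[x]\cap N[y]|=k/2$. The contradiction comes from a clean parity clash: integrality of $k/2$ requires $k$ to be even, while the handshake identity applied to a $(k-1)$-regular graph on $2k-1$ vertices requires $(k-1)(2k-1)$ to be even, and since $2k-1$ is odd this forces $k-1$ even, i.e., $k$ odd. No $k\ge 2$ satisfies both parities, completing the proof.

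The main obstacle is that the Plotkin bound is exactly tight at $n=2k-1$, so it cannot by itself exclude graphs of that order; all the extra mileage must be extracted from the equality case. I expect the parity observation above to be the cleanest finish, but an alternative route would be to note that the equality case pins $G$ down as a strongly regular graph with parameters $(2k-1,k-1,k/2-2,k/2)$ and to invoke standard SRG feasibility constraints; I would use the parity version because it is more direct and avoids any case split on adjacency.
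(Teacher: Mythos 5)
Your proposal is correct and follows essentially the same route as the paper: a Plotkin-type double counting at $n=2k-1$, an analysis of the equality case forcing $(k-1)$-regularity and all symmetric differences equal to $k$, and the same parity clash between $|N[x]\cap N[y]|=k/2$ and the handshake identity. The only cosmetic difference is that the paper adjoins the all-zero codeword for $\emptyset$ (giving $2k$ codewords and a cleaner per-coordinate bound), whereas you cap each coordinate's contribution using the degree bound $|N[j]|\ge k$ together with the symmetry $j\in N[x]\iff x\in N[j]$.
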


\begin{proof}
We begin by giving a construction from a graph in $\Gr(n,k)$ to
error-correcting codes. A non-existence result of error-correcting
codes then yields the non-existence of $\Gr(n,k)$ graphs of certain
parameters. Let $G=(V,E)\in \Gr(n,k),$ where $V=\{x_1,x_2,\ldots,x_n\}$. 
We construct $n+1$ binary strings $\mathbf{y}_i=(y_{i1},\ldots,y_{in})$ 
of length $n$, for $i=0,\dots,n$,  
from the sets $\emptyset=N[\emptyset]$ and $N[x_i]$ for $i=1,\ldots, n$
by defining $y_{0j}=0$ for all $j$ and
\[y_{ij}=
\begin{cases}
  0 &\text{if }x_j\not\in N[x_i]\\
1&\textnormal{if }x_j\in N[x_i]
\end{cases}
,
\qquad 
1\le i\le n.
\] 
Let $C$ denote the code which consists of these binary strings as codewords.
Because $G\in \Gr(n,k)$, the symmetric difference of two closed
neighbourhoods $N[x_i]$ and $N[x_j]$, or of one neigbourhood $N[x_i]$
and $\emptyset$, %two different vertices 
is at least $n-k+1$ by
\eqref{eqkarak};  in other words, the minimum Hamming distance $d(C)$
of the code $C$ is at least $n-k+1$. 

We first give a simple proof that $\NN(k)\le 2k-1$. Thus,
suppose that there is a $G\in \Gr(n,k)$ such that $n=2k$. In
the corresponding error-correcting code $C$, the minimum distance is
at least $d=n-k+1=k+1>n/2$. 
Let the maximum cardinality of the
error-correcting codes of length $n$ and minimum distance at least $d$
be denoted by $A(n,d)$. 
We can apply the Plotkin bound (see for example
\cite[Chapter 2, \S 2]{macwil}), which says $A(n,d)\le 2\lfloor
d/(2d-n)\rfloor,$ when $2d>n$. Thus, we have 
\[A(n,d)\le 
2\left\lfloor\frac{k+1}{2}\right\rfloor\le k+1.\] 
Because $k+1<2k= n<|C|$, this contradicts the existence of $C$. Hence,
there cannot exist a graph $G\in \Gr(2k,k)$, and thus
$\Gr(n,k)=\emptyset$ when $n\ge 2k$. 

The Plotkin bound is not strong enough to imply $\NN(k)\le 2k-2$ in
general, but we obtain this from the proof of the Plotkin bound as follows.
(In fact, for odd $k$, $\NN(k)\le 2k-2$ follows from the Plotkin bound
for an odd minimum distance. We leave this to the reader since the
argument below is more general.)

Suppose that $G=(V,E)\in\Gr(n,k)$ with $n=2k-1$. 
We thus have a corresponding error-correcting code $C$ with
$|C|=n+1=2k$ and minimum Hamming distance at least $n-k+1=k$. Hence,
letting $d$ denote the Hamming distance,
\begin{equation}
  \label{pl}
\sum_{0\le i<j\le n} d(y_i,y_j) \ge \binom{n+1}2k =
\frac{2k(2k-1)}{2}k
=(2k-1)k^2.
\end{equation}
On the other hand, if there are $s_m$ strings $y_i$ with $y_{im}=1$,
and thus $|C|-s_m=2k-s_m$ strings with $y_{im}=0$, then the number of
ordered pairs $(i,j)$ such that $y_{im}\neq y_{jm}$ is
$2s_m(2k-s_m)\le 2k^2$. Hence each bit contributes at most $k^2$ to the
sum in \eqref{pl}, and summing over $m$ we find
\begin{equation}
  \label{pl2}
\sum_{0\le i<j\le n} d(y_i,y_j) 
\le nk^2
=(2k-1)k^2.
\end{equation}
Consequently, we have equality in \eqref{pl} and \eqref{pl2}, and thus
$d(y_i,y_j)=k$ for all pairs $(i,j)$ with $i\neq j$.

In particular, $|N[x_i]|=d(y_i,y_0)=k$ for $i=1,\dots,n$, and thus
every vertex in $G$ has degree $k-1$, i.e., $G$ is $(k-1)$-regular.
Hence, $2|E|=n(k-1)=(2k-1)(k-1)$, and $k$ must be odd.

Further, if $i\neq j$, then $|\Nxi\symdiff\Nxj|=d(y_i,y_j)=k$, and
since $\Nxi\setminus\Nxj$ and $\Nxj\setminus\Nxi$ have the same size
$k-|\Nxi\cap\Nxj|$, they have both the size $k/2$ and $k$ must be
even.

This contradiction shows that $\Gr(2k-1,k)=\emptyset$, and thus
$\NN(k)\le2k-2$. 
\end{proof}

The next theorem (which does not use \refT{kbound}) 
will lead to another
upper bound in \refT{upperbound}.
It can be seen as an improvement for
the extreme case $\Gr(2k-2,k)$ of Mantel's \cite{Mantel}
theorem on existence of triangles in a graph. 
Note that this result fails for $k=5$ by \refE{Ecube}.
\begin{thm}\label{trianglelemma}
Suppose $G\in \Gr(n,k)$ and $k\ge 6$. If $n\ge 2k-2$, then there is a
triangle in $G$.
\end{thm}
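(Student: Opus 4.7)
The plan is a contradiction argument. Assume $G \in \Gr(2k-2, k)$ is triangle-free with $k \ge 6$, and write $n = 2k-2$ and $m = |E(G)|$. \refC{Cekaehto} supplies two inputs: $\delta_G \ge k-2$ (so that $m \ge (k-1)(k-2)$), and -- by expanding $|N[u] \cup N[v]|$ for non-adjacent $u,v$ and writing $c(u,v) := |N(u) \cap N(v)|$ -- the inequality
\[
c(u,v) \le \tfrac12\bigpar{\deg u + \deg v - k + 3}.
\]

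The key move is a double count of paths of length $2$. Their total equals $\sum_v \binom{\deg v}{2}$; on the other hand, triangle-freeness forces every such path to have non-adjacent endpoints, so the total also equals $\sum_{\{u,v\}\notin E} c(u,v)$. Substituting the bound on $c(u,v)$ and using $\sum_{\{u,v\}\notin E}(\deg u + \deg v) = 2m(n-1) - \sum_v \deg(v)^2$ yields an upper bound on $\sum_v \deg(v)^2$ linear in $m$. Combining it with the Cauchy--Schwarz inequality $\sum_v \deg(v)^2 \ge (2m)^2/n$ produces a quadratic inequality in $m$, namely
\[
4m^2 - m(5k-7)(k-1) + (k-3)(k-1)^2(2k-3) \le 0.
\]
Evaluating the left-hand side at the lower bound $m = (k-1)(k-2)$ simplifies to $(k-1)^2(k^2 - 8k + 11)$, which is positive for $k \ge 7$. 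Since the quadratic is convex with vertex at $(5k-7)(k-1)/8 < (k-1)(k-2)$ (valid for $k \ge 4$), positivity at $m = (k-1)(k-2)$ propagates to every $m \ge (k-1)(k-2)$, closing the case $k \ge 7$.

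The case $k = 6$ is borderline: there $k^2 - 8k + 11 = -1$, and the same inequalities instead pin down $m = 20$ together with equality in Cauchy--Schwarz, forcing $G$ to be $4$-regular on $10$ vertices and reducing the $c$-bound to $c(u,v) \le 2$ for every non-edge. I finish by a vertex-local count: a fixed vertex $v$ originates $\deg(v)(\deg(v)-1) = 12$ paths of length $2$; triangle-freeness makes them all terminate in the $5$ non-neighbours of $v$; and each such non-neighbour is the endpoint of at most $c \le 2$ of them. This gives the absurdity $12 \le 10$.

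The main obstacle is precisely this knife-edge behaviour at $k=6$: the global Cauchy--Schwarz step only just fails and has to be supplemented by the local count at $v$. The sharpness is consistent with the excerpt's observation that the $3$-cube lies in $\Gr(8,5)$, so the statement genuinely fails at $k = 5$.
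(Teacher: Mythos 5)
Your argument is correct, and it takes a genuinely different route from the paper's. The paper proves \refT{trianglelemma} by a case analysis on the minimum degree of $G$ (which the degree bounds from \refC{Cekaehto} pin to $\{k-2,k-1,k\}$ when $n\ge 2k-2$), in each case exhibiting an explicit $k$-subset that fails to separate two vertices. You instead extract from \refC{Cekaehto} only two numerical facts --- $\delta_G\ge k-2$ and, for non-adjacent $u,v$, $2|N(u)\cap N(v)|\le \deg u+\deg v-k+3$ --- and run a Mantel-style double count of paths of length two, closed by Cauchy--Schwarz. I checked the algebra: the value $(k-1)^2(k^2-8k+11)$ of the quadratic at $m=(k-1)(k-2)$, the location $(5k-7)(k-1)/8<(k-1)(k-2)$ of its vertex, the forced conclusions $m=20$ and $4$-regularity at $k=6$, and the final local count $12\le 5\cdot 2$ are all right. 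Your approach is more uniform and mechanical for $k\ge7$ and makes transparent why $k=5$ must be excluded ($k^2-8k+11<0$ there as well, consistent with the $3$-cube in $\Gr(8,5)$), at the cost of a separate knife-edge argument at $k=6$; the paper's approach is longer and more ad hoc but entirely explicit, produces the offending non-identifying sets, and treats all $k\ge6$ in one framework. Two small points you should make explicit: the reduction from $n\ge 2k-2$ to $n=2k-2$, which is immediate from \refL{L1}\ref{L1b} since a triangle in an induced subgraph is a triangle in $G$ (your intersection bound uses $n=2k-2$ exactly); and that at $k=6$ the regularity already follows from $\sum_v\deg v=2m=40$ together with $\delta_G\ge 4$ on $10$ vertices, so the appeal to equality in Cauchy--Schwarz is not actually needed.
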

\begin{proof}
Let $G=(V,E)\in \Gr(n,k)$. Suppose to the contrary that there are no
triangles in $G$. If there is a vertex $x\in V$ such that
$\deg(x)\ge k+1$, then we select in $N(x)$ a $k$-set $X$ and a vertex $y$
outside it; since $X$ has to dominate $y$,
it is clear that there exists a triangle $xyz$. 
Hence $\deg(x)\le k$ for every $x$.
On
the other hand, we know that for all $x\in V$ $\deg(x)\ge n-k\ge
k-2$.

Let $x\in V$ be a vertex whose degree is minimal. We denote
$V\setminus N[x]=B$ and we use the fact that $|B|\le k-1$.

1) Suppose first $\deg(x)=k$. Because $\deg(x)$ is minimal we know
that for all $a\in N(x)$, $\deg(a)=k$. This is possible if and only
if $|B|=k-1$ and for all $a\in N(x)$ we have $B\cap N(a)=B$. But
then in the $k$-subset $C=\{x\}\cup B$ we have $I(C;a)=I(C;b)$ for all
$a,b\in N(x)$. This is impossible.

2) Suppose then $\deg(x)=k-1$. If now $|B|\le k-2$ the graph is
impossible as in the first case. Hence, $|B|=k-1$. For every $a\in
N(x)$ there are at least $k-2$ adjacent vertices in $B$, and thus at
most 1 non-adjacent. This
implies that for all $a, b\in N(x)$, $a\ne b$, we have $|N(a)\cap
N(b)\cap B|\ge k-3\ge 2$, when $k\ge 5$. Hence, by choosing $a,b\in
N(x)$, $a\ne b$, we have the $k$-subset $C=\{x\}\cup (N(x)\setminus
\{a,b\})\cup \{c_1,c_2\}$, where $c_1,c_2\in N(a)\cap N(b)\cap B$.
In this $k$-subset $I(C;a)=I(C;b)$, which is impossible.

3) Suppose finally $\deg(x)=k-2$. Now $|B|=k-1$, otherwise we cannot
have $n\ge 2k-2$. If there is $b\in B$ such that $|N(b)\cap
N(x)|=k-2$, then because $\deg(b)\le k$ we have
$|B\setminus(N[b]\cap B)|\ge k-4\ge 2$, when $k\ge 6$. Hence, there
are $c_1,c_2\in B\setminus N[b]$, $c_1\ne c_2$, and in the $k$-subset
$C=N(x)\cup \{c_1,c_2\}$ we have $I(C;x)=I(C;b)$ which is impossible.

Thus, for all $b\in B$ we have $|N(b)\cap N(x)|\le k-3$. On the
other hand, 
each of the $k-2$ vertices in $N(x)$ has at least $k-3$ adjacent
vertices in $B$, so
the vertices in $B$ have on the average at least $(k-2)(k-3)/(k-1)>k-4$
adjacent vertices in the set $N(x)$. Hence, we can find $b\in B$ such 
that $|N(b)\cap N(x)|=k-3$. Because $\deg(b)\ge k-2$ we have at
least one $b_0\in B$ such that $d(b,b_0)=1$. Because there are no
triangles, each of the $k-3$ neighbours of $b$ in $N(x)$ is not
adjacent with $b_0$, and therefore adjacent to at least $k-3$ of the
$k-2$ vertices in $B\setminus\set{b_0}$. 
Hence, for all $a_1,a_2\in N(x)\cap N(b)$,
$a_1\ne a_2$, we have $|N(a_1)\cap N(a_2)\cap B|\ge k-4\ge 2$ when
$k\ge 6$. In the $k$-subset $C=\{x,b_0,c_1,c_2\}\cup (N(x)\setminus
\{a_1,a_2\})$, where $c_1,c_2\in N(a_1)\cap N(a_2)\cap B$, we have
$I(C;a_1)=I(C;a_2)$, which is impossible.
\end{proof}

\begin{lemma}\label{Ltriangleupperbound}
If there is a graph $G\in\Gr(n,k)$ that contains a triangle, then
$n\le 3k-9$. (In particular, $k\ge5$.)
\end{lemma}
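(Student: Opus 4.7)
The plan is to use the characterization from \refT{karakterisointi} together with a simple set-theoretic identity, applied to the three pairs among the triangle.

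Let $x, y, z \in V$ form a triangle in $G$. First I would note that, since $G\in\Gr(n,k)$, \eqref{eqkarak} gives
\[
|N[x]\symdiff N[y]|,\; |N[x]\symdiff N[z]|,\; |N[y]\symdiff N[z]| \;\ge\; n-k+1,
\]
so summing the three inequalities,
\[
|N[x]\symdiff N[y]|+|N[x]\symdiff N[z]|+|N[y]\symdiff N[z]|\;\ge\;3(n-k+1).
\]

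The key step is the well-known identity that, for any three sets $A,B,C$,
\[
|A\symdiff B|+|A\symdiff C|+|B\symdiff C|
 \;=\; 2\,\bigl|(A\cup B\cup C)\setminus(A\cap B\cap C)\bigr|,
\]
which one checks by casing on how many of $A,B,C$ contain a given element (an element in exactly one or exactly two contributes to exactly two of the symmetric differences, while elements in $0$ or $3$ of them contribute nothing). Applying this with $A=N[x], B=N[y], C=N[z]$, the lower bound above becomes
\[
\bigl|(N[x]\cup N[y]\cup N[z])\setminus(N[x]\cap N[y]\cap N[z])\bigr|
 \;\ge\; \tfrac{3}{2}(n-k+1).
\]

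Finally I would use that, because $xyz$ is a triangle, all three of $x,y,z$ lie in $N[x]\cap N[y]\cap N[z]$, so this common intersection has at least $3$ elements. Since $V$ is the disjoint union of $(N[x]\cup N[y]\cup N[z])\setminus(N[x]\cap N[y]\cap N[z])$, of $N[x]\cap N[y]\cap N[z]$, and of $V\setminus(N[x]\cup N[y]\cup N[z])$, we get
\[
n \;\ge\; \tfrac{3}{2}(n-k+1) + 3,
\]
which rearranges to $n\le 3k-9$. The auxiliary claim $k\ge5$ then follows from $k\le n\le 3k-9$.

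I do not expect a real obstacle here: the argument is essentially the three-pair version of the usual symmetric-difference counting, and the triangle assumption enters only through the trivial bound $|N[x]\cap N[y]\cap N[z]|\ge 3$. The only thing to be careful about is verifying the three-set symmetric-difference identity cleanly, but this is purely combinatorial and a short case analysis suffices.
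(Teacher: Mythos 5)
Your proof is correct and is essentially the paper's argument in different packaging: the identity $|A\symdiff B|+|A\symdiff C|+|B\symdiff C|=2\,|(A\cup B\cup C)\setminus(A\cap B\cap C)|$ is the complementary form of the paper's observation that the three non-separating sets $M_{xy}=V\setminus(N[x]\symdiff N[y])$, $M_{xz}$, $M_{yz}$ cover $V$ and each contain $\set{x,y,z}$. Both routes feed the same two inputs ($|N[x]\symdiff N[y]|\ge n-k+1$ for each pair, and $x,y,z\in N[x]\cap N[y]\cap N[z]$) into the same double count and arrive at $n\le 3k-9$, so there is no gap.
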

\begin{proof}
Suppose that $G=(V,E)\in\Gr(n,k)$ and that
there is a triangle $\{x,y,z\}$ in $G$. 
Let, for $v,w\in V$, $J_w(v)$ denote the indicator function given by
$J_w(v)=1$ if $v\in N[w]$ and $J_w(v)=0$ if $v\notin N[w]$.
Define the set
$M_{xy}=\set{v\in V:J_x(v)=J_y(v)}$, and
$M'_{xy}=M_{xy}\setminus\set{x,y,z}$.
Since $M_{xy}$ does not
separate $x$ and $y$, we have $|M_{xy}|\le k-1$. Further,
$\set{x,y,z}\subseteq M_{xy}$, and thus $|M'_{xy}|\le k-4$.
Define similarly $M_{xz}$, $M_{yz}$, $M'_{xz}$, $M'_{yz}$; the same
conclusion holds for these.

Since the indicator functions take only two values,
$M_{xy}$, $M_{xz}$ and $M_{yz}$ cover $V$, and thus
\begin{equation*}
  n=|V|= |M'_{xy}\cup M'_{xz}\cup M'_{yz}\cup\set{x,y,z}|
\le 3(k-4)+3=3k-9.
\end{equation*}
Since $n\ge k$, this entails $3k-9\ge k$ and thus $k\ge 5$.
\end{proof}

The following upper bound is generally weaker than
Theorem~\ref{kbound}, but it gives the optimal result for $k=6$. 
(Note that the result fails for $k\ge5$, see \refS{Ssmall}.)
\begin{thm}\label{upperbound}
Suppose $k\ge 6$. Then $\NN(k)\le 3k-9$.
%If $n> 3k-9$, then there are no graphs in $\Gr(n,k)$.
\end{thm}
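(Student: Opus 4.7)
The plan is straightforward: this theorem is designed as an immediate consequence of the two preceding results, Theorem~\ref{trianglelemma} and Lemma~\ref{Ltriangleupperbound}, so I will simply do a short case split on whether $n$ is large enough to force the existence of a triangle.

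Suppose $G\in\Gr(n,k)$ with $k\ge6$. First I would split into two cases according to the size of $n$ relative to the threshold $2k-2$ that appears in Theorem~\ref{trianglelemma}. In the first case $n\ge 2k-2$, Theorem~\ref{trianglelemma} applies and guarantees that $G$ contains a triangle; then Lemma~\ref{Ltriangleupperbound} immediately yields $n\le 3k-9$, which is exactly the desired bound. In the second case $n\le 2k-3$, there is nothing to force a triangle, but the bound we want comes for free from arithmetic: the inequality $2k-3\le 3k-9$ is equivalent to $k\ge6$, which is our hypothesis, so $n\le 2k-3\le 3k-9$.

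There is no real obstacle here, since all the genuine work has been done in the two preceding results. The only thing to note is that the hypothesis $k\ge6$ is used in both branches of the case split: it is required by Theorem~\ref{trianglelemma} in the first case, and it is precisely what makes the arithmetic $2k-3\le 3k-9$ work in the second case (with equality when $k=6$). This also explains why the threshold $n\ge 2k-2$ in Theorem~\ref{trianglelemma} is the right one to pair with Lemma~\ref{Ltriangleupperbound} for this application: the two bounds meet exactly at $k=6$, giving $\NN(6)\le 3\cdot 6-9=9$, which matches the lower bound from Example~\ref{Ebcc} used in Theorem~\ref{T5}.
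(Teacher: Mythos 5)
Your proof is correct and is essentially the paper's own argument: the paper splits on whether $G$ contains a triangle while you split on whether $n\ge 2k-2$, but these are the same two cases handled by the same two results (Theorem~\ref{trianglelemma} plus Lemma~\ref{Ltriangleupperbound} in one branch, the arithmetic $2k-3\le 3k-9$ for $k\ge6$ in the other).
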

\begin{proof}
Suppose that $G\in\Gr(n,k)$.
If $G$ does not contain any triangle, then
\refT{trianglelemma} yields $n\le 2k-3 \le 3k-9$.
If $G$ does contain a triangle, then
Lemma~\ref{Ltriangleupperbound} yields $n\le 3k-9$.
\end{proof}

\section{Strongly regular graphs}\label{sec:strong}
A graph $G=(V,E)$ is called \emph{strongly regular} with
parameters $(n,t,\lambda, \mu)$ if $|V|=n$, $\deg(x)=t$ for all $x\in
V$, any two adjacent vertices have exactly $\lambda$ common
neighbours, and any two nonadjacent vertices have exactly $\mu$ common
neighbours;
we then say that $G$ is a $(n,t,\lambda,\mu)\SRG$. 
See \cite{DistRegGraphs} for more information. 
By \cite[Proposition 1.4.1]{DistRegGraphs} we know that if 
$G$ is a $(n,t,\lambda,\mu)\SRG$, then $n= t+1+t(t-1-\lambda)/\mu$. 

We give two examples of strongly regular graphs that will be used below.

\begin{ex} \label{Epaley}
The well-known Paley graph
$P(q)$, where $q$ is a prime power with $q\equiv1\pmod4$,
is a %strongly regular graph with parameters
$(q,\,(q-1)/2,\,(q-5)/4,\,(q-1)/4)\SRG$, 
see for example \cite{DistRegGraphs}.  
The vertices of $P(q)$ are the elements of the finite field $F_q$,
with an edge $ij$ if and only if $i-j$ is a non-zero square in the
field; when $q$ is a prime, this means that the vertices are
\set{1,\dots,q} with edges $ij$ when $i-j$ is a quadratic residue
mod $q$.
\end{ex}

\begin{ex}\label{ERSHCD}
Another construction of strongly regular graphs uses a regular
symmetric Hadamard matrix with constant diagonal (\RSHCD)
\cite{GothSei70},
\cite{BvL84},
\cite{CombDesI}.
In particular, in the case (denoted \RSHCD+) of 
a regular symmetric $n\times n$ Hadamard matrix $H=(h_{ij})$ with
diagonal entries $+1$ and constant positive row sums $2m$ (necessarily
even when $n>1$), then $n=(2m)^2=4m^2$ and the graph $G$ with vertex
set \set{1,\dots,n} and an edge $ij$ (for $i\neq j$) if and only if
$h_{ij}=+1$ is a $(4m^2,\,2m^2+m-1,\,m^2+m-2,\,m^2+m)\SRG$
\cite[\S8D]{BvL84}.

It is not known for which $m$ such \RSHCD+ exist
(it has been conjectured that any $m\ge1$ is possible)
but constructions for many $m$ are known, see
\cite{GothSei70},
\cite[V.3]{Wallis} and \cite[IV.24.2]{CombDesI}. 
For example, starting with the $4\times4$ \RSHCD+ 
\begin{equation*}
H_4=
  \begin{pmatrix}
	\phantom{-}1 & \phantom{-}1 & \phantom{-}1 & -1 \\
	\phantom{-}1 & \phantom{-}1 & -1 & \phantom{-}1 \\
	\phantom{-}1 & -1 & \phantom{-}1 & \phantom{-}1 \\
	-1 & \phantom{-}1 & \phantom{-}1 & \phantom{-}1 
  \end{pmatrix}
\end{equation*}
its tensor power $H_4^{\otimes r}$ is an \RSHCD+ with $n=4^r$, and thus
$m=2^{r-1}$, for any $r\ge1$. This yields a 
$(2^{2r},\,2^{2r-1}+2^{r-1}-1,\,2^{2r-2}+2^{r-1}-2,\,2^{2r-2}+2^{r-1})\SRG$
with vertex set
$\set{1,2,3,4}^r$, where two different vertices $(i_1,\dots,i_r)$ and
$(j_1,\dots,j_r)$ are adjacent if and only if the number of
coordinates $\nu$ such that $i_\nu+j_\nu=5$ is even.
\end{ex}

\begin{thm}\label{TSRG}
A strongly regular graph $G=(V,E)$ with parameters $(n,t,\lambda,\mu)$
belongs to $\Gr(n,k)$ if and only  if 
\begin{equation*}
  k\ge \max\bigl\{n-t,\, n-2t+2\gl+3,\, n-2t+2\mu-1 
\bigr\},
\end{equation*}
or, equivalently,
$t\ge n-k$ and $2\max\{\lambda+1,\mu-1\}\le k+2t-n-1$.
\end{thm}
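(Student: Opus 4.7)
The plan is to apply Corollary \ref{Cekaehto}, which characterizes membership in $\Gr(n,k)$ by two conditions: a minimum-degree condition and a bound on $|N[x]\cap N[y]|+|V\setminus(N[x]\cup N[y])|$ over all distinct pairs $x,y$. Since $G$ is $t$-regular we have $\delta_G=t$, so condition \ref{ekaehto} of \refC{Cekaehto} reduces immediately to $t\ge n-k$, i.e.\ $k\ge n-t$.

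Next, I would compute the two quantities in condition \ref{tokaehto} of \refC{Cekaehto} by splitting into two cases depending on whether $x\sim y$ or $x\not\sim y$, using the defining property of a strongly regular graph. Writing $N[x]\cap N[y]=(\{x\}\cup N(x))\cap(\{y\}\cup N(y))$ and using that $x\in N[y]$ exactly when $x\sim y$, I find
\begin{equation*}
|N[x]\cap N[y]|=\begin{cases}\lambda+2,&x\sim y,\\ \mu,&x\not\sim y.\end{cases}
\end{equation*}
Then, from $|V\setminus(N[x]\cup N[y])|=n-2(t+1)+|N[x]\cap N[y]|$, the sum in \refC{Cekaehto}\ref{tokaehto} equals $n-2t+2\lambda+2$ when $x\sim y$ and $n-2t+2\mu-2$ when $x\not\sim y$. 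Requiring both to be $\le k-1$ yields $k\ge n-2t+2\lambda+3$ and $k\ge n-2t+2\mu-1$ respectively.

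Combining these with $k\ge n-t$ gives the claimed inequality, and this is necessary and sufficient by \refC{Cekaehto}. The equivalent reformulation $t\ge n-k$ together with $2\max\{\lambda+1,\mu-1\}\le k+2t-n-1$ follows by rewriting the two $\mu$- and $\lambda$-conditions jointly. There is essentially no obstacle beyond careful bookkeeping; one only needs to note that if $G$ has no edges (so the $\lambda$-condition is vacuous) or is complete (so the $\mu$-condition is vacuous) the corresponding maximum is over an empty set and may be ignored, but this does not affect the stated bound since the relevant term is then dominated by the others.
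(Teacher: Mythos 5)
Your proof is correct and follows essentially the same route as the paper: the paper deduces the theorem directly from Theorem~\ref{karakterisointi} by computing $|N[x]\symdiff N[y]|$ in the adjacent and non-adjacent cases, which is exactly your computation via \refC{Cekaehto} since $|N[x]\cap N[y]|+|V\setminus(N[x]\cup N[y])|=n-|N[x]\symdiff N[y]|$. The values you obtain ($\lambda+2$ and $\mu$ for the intersections, hence the thresholds $n-2t+2\lambda+3$ and $n-2t+2\mu-1$) agree with the paper's.
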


\begin{proof}
An immediate consequence of Theorem~\ref{karakterisointi}, since
$|\Nx|=t+1$ for every vertex $x$ and
$|N[x]\symdiff N[y]|$ equals 
$2(t-\gl-1)$ when $x\sim y$ and $2(t+1-\mu)$ when
$x\not\sim y$, $x\neq y$. 
\end{proof}

We can extend this construction to other values of $n$ by modifying
the strongly regular graph.

\begin{thm}\label{TSRG2}
  If there exists a strongly regular graph with parameters
  $(n_0,t,\gl,\mu)$, then for every 
$i=0,\dots,n_0+1$ there exists a graph in $\Gr(n_0+i,k_0+i)$,
where
\begin{equation*}
 k_0= \max\bigl\{n_0-t,\,t,\, n_0-2t+2\gl+3,\, n_0-2t+2\mu-1,\,
 2t-2\gl-1,\,2t-2\mu+2 \bigr\},
\end{equation*}
provided $k_0\le n_0$.
\end{thm}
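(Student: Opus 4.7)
My plan is to exhibit explicit graphs $G_i\in\Gr(n_0+i,k_0+i)$ for each $i$ and verify membership via \refT{karakterisointi}, i.e.\ by showing every closed neighbourhood and every pairwise symmetric difference of closed neighbourhoods has size at least $n_0-k_0+1$. For $i=0$ take $G_0$ itself, which lies in $\Gr(n_0,k_0)$ by \refT{TSRG} and \refL{L1}\ref{L1a}. For $1\le i\le n_0$ pick distinct $v_1,\dots,v_i\in V_0$ and form $G_i$ from $G_0$ by adjoining new vertices $a_1,\dots,a_i$, letting $a_s$ be adjacent to $T_s:=V_0\setminus N_{G_0}(v_s)$ (equivalently, to the closed neighbourhood $N_{\bar G_0}[v_s]$ of $v_s$ in the complement) and leaving the $a_s$ pairwise non-adjacent. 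For $i=n_0+1$ start from $G_{n_0}$, obtained by the previous construction with $v_1,\dots,v_{n_0}$ enumerating all of $V_0$, and adjoin one further universal vertex using \refL{Ladd1}.

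The verification for $1\le i\le n_0$ proceeds by a case analysis. Writing $I(v)=\{s:v\in T_s\}$, one has $N_{G_i}[v]=N_{G_0}[v]\cup\{a_s:s\in I(v)\}$ for $v\in V_0$ and $N_{G_i}[a_s]=T_s\cup\{a_s\}$. From $|N_{G_i}[a_s]|=n_0-t+1$ and $|N_{G_i}[v]|\ge t+1$ one reads off the singleton bounds $k_0\ge t$ and $k_0\ge n_0-t$. For two old vertices $v,w\in V_0$ the $V_0$-part and $A$-part of the symmetric difference are disjoint, so $|N_{G_i}[v]\symdiff N_{G_i}[w]|\ge|N_{G_0}[v]\symdiff N_{G_0}[w]|$ and the conditions from \refT{TSRG} carry over, yielding $k_0\ge n_0-2t+2\gl+3$ and $k_0\ge n_0-2t+2\mu-1$. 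For a pair $(v,a_s)$ with $v\in V_0$ a short SRG computation in the three subcases $v=v_s$, $v\sim v_s\ne v$, $v\not\sim v_s\ne v$ gives
\[
|N_{G_i}[v]\symdiff N_{G_i}[a_s]|
=\begin{cases}
n_0-2+|I(v_s)| & \text{if }v=v_s,\\
n_0-2t+2\gl+2+|I(v)| & \text{if }v\sim v_s,\ v\ne v_s,\\
n_0-2t+2\mu-2+|I(v)| & \text{if }v\not\sim v_s,\ v\ne v_s,
\end{cases}
\]
and the worst-case values $|I(v)|=0$ (second subcase) and $|I(v)|=1$ (third subcase, in which $s\in I(v)$ always) produce precisely the remaining extra constraints $k_0\ge 2t-2\gl-1$ and $k_0\ge 2t-2\mu+2$. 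Finally, a pair $(a_s,a_r)$ with $s\ne r$ gives $|N_{G_i}[a_s]\symdiff N_{G_i}[a_r]|\in\{2t-2\gl+2,\,2t-2\mu+2\}$, imposing constraints strictly weaker than ones already handled.

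For $i=n_0+1$ I would observe that in $G_{n_0}$ one has $|I(v)|=n_0-t$ for every $v\in V_0$ (so $\deg_{G_{n_0}}(v)=n_0$) and $\deg_{G_{n_0}}(a_s)=|T_s|=n_0-t$; hence $\Delta_{G_{n_0}}=n_0\le (k_0+n_0)-2$ since $k_0\ge 2$, and \refL{Ladd1} applies to yield $G_{n_0+1}\in\Gr(2n_0+1,k_0+n_0+1)$. The principal obstacle is identifying the right construction in the first place: naively iterating \refL{Ladd1} would require $k_0\ge t+2$, whereas the hypothesis only gives $k_0\ge t$, and this can genuinely fail (for instance, the $\RSHCD+$ examples of \refE{ERSHCD} satisfy $k_0=t$). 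The insight that turns the problem into a routine SRG computation is that each $a_s$ should be a ``complement-twin'' of $v_s$ --- taking $N_{G_i}(a_s)=N_{\bar G_0}[v_s]$ --- so that each of the three new terms $t$, $2t-2\gl-1$, $2t-2\mu+2$ in $k_0$ becomes the binding symmetric-difference bound for exactly one case of the verification above.
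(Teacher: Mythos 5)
Your construction is exactly the paper's: adjoin to each chosen vertex $v_s$ a new vertex adjacent to $V_0\setminus N_{G_0}(v_s)$, verify \refT{karakterisointi} by the same case analysis (old--old, old--new, new--new pairs), and finish $i=n_0+1$ via \refL{Ladd1}; your exact formulas with $|I(v)|$ are in fact slightly sharper than the paper's inequalities, and the worst cases match. The only detail you assert without proof is $k_0\ge2$ (needed both for the $v=v_s$ subcase and for the degree condition in \refL{Ladd1}); the paper supplies it in one line by averaging the terms $n_0-2t+2\gl+3$ and $2t-2\gl-1$ in the maximum.
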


\begin{proof}
For $i=0$, this is a weaker form of \refT{TSRG}.
For $i\ge1$, we
suppose that $G_0=(V_0,E_0)$ is $(n_0,t,\gl,\mu)\SRG$
and build a graph $G_i$ in $\Gr(n_0+i,k_0+i)$ from $G_0$ by adding suitable new
vertices and edges. 

If $1\le i\le n_0$, choose $i$ different
vertices $x_1,x_2,\dots,x_i$ in $V_0$. Construct a new
graph $G_i=(V_i,E_i)$ by taking $G_0$ and adding to it new vertices
$x'_1,x'_2,\dots,x'_i$ and new edges $x_j'y$ for $j\le i$ and all
$y\notin N_{G_0}(x_j)$.

First, $\deg_{G_i}(x)\geq \deg_{G_0}(x)=t$  for
$x\in V_0$ and $\deg_{G_i}(x')=n_0-t$ for $x'\in V_i'=V_i\setminus V_0$.
We proceed to investigate $\Nx \symdiff N[y]$, and separate several cases.

(i) If $x,y\in V_0$, with $x\neq y$, then
\begin{equation*}
  \begin{split}
  \bigabs{\Nx \symdiff N[y]}
&\ge \bigabs{(\Nx \symdiff N[y])\cap V_0}
=
 \bigabs{(N_{G_0}[x] \symdiff N_{G_0}[y])},
%\\&
%\ge \min\bigpar{2(t-\gl-1),\,2(t-\mu+1)}
  \end{split}
\end{equation*}
which equals
$2(t-\gl-1)$ if $x\sim y$ and $2(t-\mu+1)$ if $x\not\sim y$.

(ii) If $x\in V_0$, $y'\in V_i'$, then, since $\symdiff$ is
associative and commutative,
\begin{equation*}
\bigabs{(\Nx \symdiff N[y'])\cap V_0}
=
 \bigabs{(N_{G_0}[x] \symdiff (V_0\symdiff N_{G_0}(y))}
=n_0 - \bigabs{(N_{G_0}[x] \symdiff N_{G_0}(y))},
\end{equation*}
which equals $n_0-1$ if $x=y$, $n_0-(2t-2\gl-1)$ if $x\sim y$, and
$n_0-(2t-2\mu+1)$ if $x\not\sim y$ and $x\neq y$.
If $x\sim y$, further,
$\bigabs{(\Nx \symdiff N[y'])\cap V_i'}\ge1$, since $y'\not\in \Nx$.

(iii)
If $x',y'\in V_i'$, with $x'\neq y'$, then
\begin{equation*}
%  \bigabs{\Nx \symdiff N[y]} \ge 
\bigabs{(N[x'] \symdiff N[y'])\cap V_0}
=
 \bigabs{(V_0\setminus N_{G_0}(x)) \symdiff (V_0\setminus N_{G_0}(y))}
=
 \bigabs{(N_{G_0}(x) \symdiff N_{G_0}(y))},
\end{equation*}
which equals $2(t-\gl)$ if $x\sim y$ and $2(t-\mu)$ if $x\not\sim
y$. Further, $\bigabs{(N[x'] \symdiff N[y'])\cap V_i'}=|\set{x',y'}|=2$.

Collecting these estimates, we see that $G_i\in\Gr(n_0+i,k_0+i)$ by
Theorem~\ref{karakterisointi} (or \refC{Cekaehto}) with our choice of
$k_0$. Note that $2k_0\ge (n_0-2t+2\gl+3)+(2t-2\gl-1)=n_0+2\ge3$, so $k_0\ge2$.

Finally, for $i=n_0+1$, we construct $G_{n_0+1}$ by
adding a new vertex to $G_{n_0}$ and connecting
it to all other vertices.
The graph $G_{n_0}$ has by construction maximum degree
$\Delta_{G_{n_0}}=n_0\le k_0+n_0-2$.
Hence, \refL{Ladd1} shows that $G_{n_0+1}\in\Gr(n_0+1,k_0+n_0+1)$.
\end{proof}

We specialize to the Paley graphs, and obtain from \refE{Epaley} and
Theorems \ref{TSRG}--\ref{TSRG2} the following.

\begin{thm} \label{paley}
Let $q$ be an odd prime power such that $q\equiv 1 \pmod 4$.
\begin{romenumerate}
  \item
The Paley graph $P(q)\in\Gr(q,(q+3)/2)$. 
\item
There exists a graph in $\Gr(q+i,(q+3)/2+i)$ for all
$i=0,1,\dots,q+1$.
\end{romenumerate}
\end{thm}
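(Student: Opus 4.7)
The plan is to derive both parts directly from the strongly regular graph theorems \refT{TSRG} and \refT{TSRG2}, using the parameters of $P(q)$ recorded in \refE{Epaley}, namely $(n_0,t,\lambda,\mu)=\bigl(q,\,(q-1)/2,\,(q-5)/4,\,(q-1)/4\bigr)$. Since the hypothesis $q\equiv1\pmod 4$ makes $\lambda$ and $\mu$ integers, and $q$ odd makes $(q+3)/2$ an integer, the arithmetic will all work out cleanly. The only real task is to evaluate the maxima appearing in those two theorems and check they equal $(q+3)/2$.

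For part (i), I would substitute the Paley parameters into \refT{TSRG} and compute the three quantities one by one:
\begin{align*}
 n-t &= q-\tfrac{q-1}{2} = \tfrac{q+1}{2},\\
 n-2t+2\lambda+3 &= q-(q-1)+\tfrac{q-5}{2}+3 = \tfrac{q+3}{2},\\
 n-2t+2\mu-1 &= q-(q-1)+\tfrac{q-1}{2}-1 = \tfrac{q-1}{2}.
\end{align*}
The maximum of these three is $(q+3)/2$, so \refT{TSRG} yields $P(q)\in\Gr(q,(q+3)/2)$.

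For part (ii), I would apply \refT{TSRG2} to $P(q)$, which requires evaluating the six quantities in the definition of $k_0$. Three of them already appeared above; the remaining three are
\begin{align*}
 t &= \tfrac{q-1}{2},\\
 2t-2\lambda-1 &= (q-1)-\tfrac{q-5}{2}-1 = \tfrac{q+1}{2},\\
 2t-2\mu+2 &= (q-1)-\tfrac{q-1}{2}+2 = \tfrac{q+3}{2}.
\end{align*}
Thus $k_0=(q+3)/2$, and the side condition $k_0\le n_0$ reduces to $(q+3)/2\le q$, i.e.\ $q\ge 3$, which holds. Hence \refT{TSRG2} produces a graph in $\Gr(q+i,(q+3)/2+i)$ for every $i=0,1,\dots,q+1$, giving (ii).

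No genuine obstacle is expected; the proof is essentially a substitution exercise, and the main thing to be careful about is ensuring each of the six expressions in \refT{TSRG2} is evaluated correctly so that $k_0$ comes out to exactly $(q+3)/2$ (and, in particular, that none of the extra ``reversed'' terms $2t-2\lambda-1$ or $2t-2\mu+2$ exceeds this value).
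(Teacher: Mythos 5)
Your proposal is correct and follows exactly the paper's route: the paper derives Theorem~\ref{paley} precisely by substituting the Paley parameters $(q,(q-1)/2,(q-5)/4,(q-1)/4)$ from Example~\ref{Epaley} into Theorems~\ref{TSRG} and \ref{TSRG2}, and your evaluations of the three (resp.\ six) quantities, each yielding the maximum $(q+3)/2$, are all accurate.
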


Note that the rate $2q/(q+3)$ for the Paley graphs approaches 2 as
$q\to\infty$; in fact, with $n=q$ and $k=(q+3)/2$ we have $n=2k-3$,
almost attaining the bound $2k-2$ in \refT{kbound}.
(The Paley graphs thus almost attain the bound in
\refT{kbound}, but never attain it exactly.)

\begin{cor}
  \label{Clower}
$\NN(k)\ge 2k-o(k)$ as $k\to\infty$.
\end{cor}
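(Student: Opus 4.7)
The plan is to combine \refT{paley}(ii) with the standard density of primes in arithmetic progressions. Recall that \refT{paley}(ii) guarantees, for every prime power $q$ with $q\equiv1\pmod 4$, that $\Gr(q+i,(q+3)/2+i)\neq\emptyset$ for every $i=0,1,\dots,q+1$. Setting $n=q+i$ and $k=(q+3)/2+i$ and eliminating $i$ gives $n=k+(q-3)/2$. So a single prime $q\equiv1\pmod 4$ witnesses the lower bound $\NN(k)\ge k+(q-3)/2$ for every $k$ in the interval $(q+3)/2\le k\le (3q+5)/2$.

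Given a large $k$, I would therefore select the largest prime $q\equiv1\pmod 4$ satisfying $q\le 2k-3$, and put $i:=k-(q+3)/2$, which is a nonnegative integer. The upper constraint $i\le q+1$ becomes $q\ge(2k-5)/3$, which certainly holds once $k$ is large enough (since $q$ will be close to $2k-3$). This choice yields $\NN(k)\ge n=k+(q-3)/2$, so the quality of the bound is controlled entirely by how close $q$ can be forced to $2k-3$.

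The remaining ingredient is that the gap between consecutive primes congruent to $1\pmod 4$ below $x$ is $o(x)$. This is an immediate consequence of the prime number theorem in arithmetic progressions, which gives $\pi_{4,1}(x)\sim x/(2\log x)$; hence the largest such prime $q\le 2k-3$ satisfies $q=2k-3-o(k)$. Plugging back in produces $\NN(k)\ge k+(q-3)/2=k+k-o(k)=2k-o(k)$, as desired.

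There is no genuine obstacle: the construction of \refT{paley} together with the pad-by-$i$ extension of \refT{TSRG2} already comes within a constant of the bound $2k-2$ of \refT{kbound} on the prime grid, and the prime number theorem in arithmetic progressions interpolates the bound to all $k$. If one wished to avoid even PNT, one could instead invoke Linnik's theorem, or simply observe that Dirichlet's theorem combined with any nontrivial prime-gap estimate (for instance Bertrand's postulate applied to the progression $\{4m+1\}$) suffices to make the gap $o(k)$.
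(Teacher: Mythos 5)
Your argument is correct and is essentially the paper's own: the published proof takes $q=p^2$ for the largest prime $p\le\sqrt{2k-3}$ and applies the ordinary prime number theorem, but it explicitly records your variant (the largest prime $q\equiv 1\pmod 4$ with $q\le 2k-3$, via the prime number theorem for arithmetic progressions) as an equivalent alternative, and your additional padding step through Theorem~\ref{paley}(ii), giving $n=k+(q-3)/2\ge q$, is harmless and even marginally sharper. The only slip is the closing aside: a Bertrand-type statement for the progression $4m+1$ only guarantees a prime in $(x/2,x]$, hence $q\ge k-O(1)$ and $n\ge\tfrac32k-o(k)$ rather than $2k-o(k)$, so some genuine $o(x)$ prime-gap input (such as PNT in arithmetic progressions) really is needed.
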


\begin{proof}
 Let $q=p^2$ where (for $k\ge6$) $p$ is the largest prime such that
 $p\le \sqrt{2k-3}$. It follows from the prime number theorem that 
 $p/\sqrt{2k-3}\to1$ as $k\to\infty$, and thus $q=2k-o(k)$.
Hence, if $k$ is large enough, then $k\le q\le 2k-3$, and \refT{paley}
 shows that  
 $P(q)\in\Gr(q,(q+3)/2)\subseteq\Gr(q,k)$, so $\NN(k)\ge q=2k-o(k)$.
(Alternatively, we may let $q$ be the largest prime such that $q\le
 2k-3$ and $q\equiv  1\pmod 4$ and use the prime number theorem for
 arithmetic progressions  \cite[Chapter 17]{Huxley} to see that then
 $q=2k-o(k)$.) 
\end{proof}

We turn to the strongly regular graphs constructed in
\refE{ERSHCD} and find from \refT{TSRG} that they are in 
$\Gr(4m^2,2m^2+1)$, thus attaining the bound in \refT{kbound}.
We state that as a theorem.

\begin{thm}\label{TRSHCD}
  The strongly regular graph constructed in \refE{ERSHCD} from an
  $n\times n$ \RSHCD+  belongs to $\Gr(n,n/2+1)$. 
\end{thm}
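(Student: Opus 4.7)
The plan is to apply Theorem~\ref{TSRG} directly to the strongly regular graph $G$ from Example~\ref{ERSHCD}, whose parameters are
\begin{equation*}
(n, t, \lambda, \mu) = (4m^2,\, 2m^2+m-1,\, m^2+m-2,\, m^2+m).
\end{equation*}
Since $n = 4m^2$, the target value $n/2+1$ equals $2m^2+1$, so the task reduces to verifying the inequality
\begin{equation*}
2m^2+1 \;\ge\; \max\bigl\{n-t,\; n-2t+2\lambda+3,\; n-2t+2\mu-1\bigr\}
\end{equation*}
from the hypothesis of Theorem~\ref{TSRG}.

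The main step is a routine substitution. First, $n-t = 4m^2 - (2m^2+m-1) = 2m^2 - m + 1$, which is at most $2m^2+1$ for all $m\ge 1$. Next, $n-2t = 4m^2 - (4m^2+2m-2) = -2m+2$, so
\begin{equation*}
n-2t+2\lambda+3 = -2m+2 + (2m^2+2m-4) + 3 = 2m^2+1,
\end{equation*}
and
\begin{equation*}
n-2t+2\mu-1 = -2m+2 + (2m^2+2m) - 1 = 2m^2+1.
\end{equation*}
Thus the maximum on the right-hand side is exactly $2m^2+1 = n/2+1$, and Theorem~\ref{TSRG} gives $G\in\Gr(n, n/2+1)$.

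There is essentially no obstacle here: the construction of the graph has already been done in Example~\ref{ERSHCD}, and the identification criterion for strongly regular graphs is packaged in Theorem~\ref{TSRG}. The only thing to notice is the slightly pleasant arithmetic coincidence that the two symmetric-difference sizes $2(t-\lambda-1) = 2(m^2-1)$ (for adjacent pairs) and $2(t+1-\mu) = 2m^2 - 2 \cdot 0 = 2m^2 - (??)$ — more precisely $2(t-\mu+1) = 2m^2$ — both come out close to $n/2$, which is what forces the bound $n/2+1$ to be attained and matches the extremal bound $\NN(k)\le 2k-2$ from Theorem~\ref{kbound}.
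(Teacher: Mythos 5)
Your proof is correct and is exactly the paper's argument: the theorem is justified there by the same direct substitution of the parameters $(4m^2,\,2m^2+m-1,\,m^2+m-2,\,m^2+m)$ into the criterion of Theorem~\ref{TSRG}, and your three computations all check out, with the maximum equal to $2m^2+1=n/2+1$. (Only your closing aside contains a slip: $2(t-\lambda-1)=2m^2$, not $2(m^2-1)$, so both symmetric-difference sizes equal $2m^2=n/2$, which is precisely what makes the bound tight.)
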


\begin{cor}\label{Cinfty}
  There exist infinitely many integers $k$ such that $\NN(k)=2k-2$.
\end{cor}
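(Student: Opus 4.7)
The plan is to match the general upper bound $\NN(k)\le 2k-2$ from \refT{kbound} with the exact lower bound supplied by \refT{TRSHCD}. Indeed, \refT{TRSHCD} asserts that the strongly regular graph constructed from any $n\times n$ \RSHCD+ belongs to $\Gr(n,n/2+1)$. Writing $k=n/2+1$, this graph has exactly $n=2k-2$ vertices, so $\NN(k)\ge 2k-2$, and combined with \refT{kbound} this forces $\NN(k)=2k-2$. Thus it suffices to exhibit infinitely many even values of $n$ for which an \RSHCD+ of order $n$ exists.

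For this I will simply invoke the explicit tensor power family already recorded in \refE{ERSHCD}: the matrices $H_4^{\otimes r}$ are $4^r\times 4^r$ \RSHCD+ for every $r\ge 1$. Taking $n=4^r$ and $k=n/2+1=2^{2r-1}+1$, \refT{TRSHCD} produces a graph in $\Gr(4^r,\,2^{2r-1}+1)$ with $n=2k-2$, and \refT{kbound} supplies the matching upper bound. Since the values $k=2^{2r-1}+1$ for $r\ge 1$ form an infinite set of integers, the corollary follows.

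There is no real obstacle: the statement is essentially a packaging of \refT{TRSHCD} together with \refT{kbound}, once any infinite family of admissible \RSHCD+ orders is identified, and the tensor power construction $H_4^{\otimes r}$ is the cleanest such family and is already given in \refE{ERSHCD}. One could obtain a denser infinite set of $k$ by appealing to the other \RSHCD+ constructions referenced there, but this is not needed for \refC{Cinfty} as stated.
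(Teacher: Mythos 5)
Your proof is correct and follows exactly the paper's own argument: combine the upper bound of Theorem~\ref{kbound} with the lower bound from Theorem~\ref{TRSHCD}, instantiated on the tensor-power family $H_4^{\otimes r}$ from Example~\ref{ERSHCD}, giving $\NN(k)=2k-2$ for $k=2^{2r-1}+1$, $r\ge1$. Nothing to add.
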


\begin{proof}
  If $k=n/2+1$ for an even $n$ such that there exists an 
  $n\times n$ \RSHCD+, then $\NN(k)\ge n=2k-2$ by \refT{TRSHCD}.
The opposite inequality is given by \refT{kbound}. By \refE{ERSHCD},
  this holds at least for $k=2^{2r-1}+1$ for any $r\ge1$.
\end{proof}

\section{Smaller identifying sets}\label{Ssmaller}

The fact that \emph{all} sets of $k$ vertices in a given graph
are identifying implies typically
that there exist \emph{many} identifying sets of smaller size $s$ too,
as is shown by the following result.

\begin{thm}\label{prob} Let $G=(V,E)\in \Gr(n,k)$. Then,
for a random subset $S$ of $V$ of size $s$
\[
\P(\text{$S$ is identifying in $G$}) \geq
1-\binom{n+1}{2}\frac{\binom{k-1}{s}}{\binom{n}{s}}.
\]
\end{thm}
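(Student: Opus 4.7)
The plan is to use a straightforward union bound over all pairs of candidate ``signatures'' that need to be separated. The key observation is that $S$ is identifying precisely when, for every pair of distinct sets $X, Y \subseteq V$ with $|X|, |Y| \le 1$, the set $S$ intersects $N[X] \symdiff N[Y]$; equivalently, $S$ is not contained in the complement of this symmetric difference.

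First I would fix an unordered pair $\{X,Y\}$ of distinct subsets of size at most one, of which there are $\binom{n+1}{2}$ (since there are $n+1$ choices for a set of size $\le 1$, namely $\emptyset$ and the $n$ singletons). Applying the characterization in \refT{karakterisointi} with $\ell=1$ to our graph $G\in\Gr(n,k)$ gives
\[
\bigabs{V\setminus (N[X]\symdiff N[Y])} \le k-1.
\]
The event that $S$ fails to distinguish $X$ and $Y$ is exactly $\{S\cap(N[X]\symdiff N[Y])=\emptyset\}$, i.e., $\{S\subseteq V\setminus(N[X]\symdiff N[Y])\}$. Since $S$ is chosen uniformly among $s$-subsets of $V$, this event has probability
\[
\frac{\binom{|V\setminus(N[X]\symdiff N[Y])|}{s}}{\binom{n}{s}}
\le \frac{\binom{k-1}{s}}{\binom{n}{s}},
\]
where the inequality uses monotonicity of $\binom{\cdot}{s}$ together with the bound above.

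Finally, I would apply the union bound over the $\binom{n+1}{2}$ pairs $\{X,Y\}$ to obtain
\[
\P(S\text{ is not identifying}) \le \binom{n+1}{2}\frac{\binom{k-1}{s}}{\binom{n}{s}},
\]
and take the complementary probability. There is no real obstacle here; the only point requiring a small amount of care is recognizing that identification must also account for the empty set (which contributes the degree condition \refC{Cekaehto}\ref{ekaehto}), so that the enumeration gives $n+1$ singletons-or-empty rather than just $n$ singletons.
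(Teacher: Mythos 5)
Your proposal is correct and takes essentially the same route as the paper: the paper's double-counting of non-separated pairs and undominated vertices over all $s$-subsets is exactly your union bound, with the same $\binom{n+1}{2}=\binom{n}{2}+n$ events (pairs of singletons, plus singleton versus empty set) and the same bound of $\binom{k-1}{s}$ bad sets per event coming from \refT{karakterisointi}.
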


\begin{proof}
Let $G=(V,E)\in \Gr(n,k)$ and $\mathcal{S}$ be the set of all
$s$-subsets of $V$. Clearly, $|\mathcal{S}|=\binom{n}{s}$. Denote by
$F_2(S)$, $S\in \mathcal{S}$, the number of unordered pairs
$\{u,v\}\in \binom{V}{2}$ such that $u$ and $v$ are not separated by $S$,
that is, $I(S;u)\symdiff I(S;v)=\emptyset$, and by $F_1(S)$
the number of vertices $w\in V$ such that $I(S;w)=\emptyset$.

We count
\begin{align*}
\sum_{S\in \mathcal{S}} F_2(S)+\sum_{S\in \mathcal{S}} F_1(S) & = 
\sum_{S\in \mathcal{S}}\sum_{\quad \substack{\{u,v\}\in \binom{V}{2}\\
I(u) \symdiff I(v)=\emptyset}} 1
+\sum_{S\in \mathcal{S}}
 \sum_{\quad \substack{w\in V\\ I(w)=\emptyset}} 1\\
  &=  \sum_{\{u,v\}\in \binom{V}{2}} \sum_{\quad \substack{S\in \mathcal{S}\\
I(u) \symdiff I(v)=\emptyset}}1 
+ \sum_{w\in V} \sum_{\quad \substack{S\in \mathcal{S}\\I(w)=\emptyset}}1\\
& \leq  \lrpar{\binom{n}{2}+n}\binom{k-1}{s}
=\binom{n+1}{2}\binom{k-1}{s}.
\end{align*}
This bounds from above the number of sets $S\in \mathcal{S}$ that
have an unidentified pair or a vertex with empty $I$-set. Thus
\[\P(S\in \mathcal{S}  \mbox{ is identifying})\geq
%\frac{\binom{n}{s}-\binom{n+1}{2}\binom{k-1}{s}}{\binom{n}{s}} .
1-\frac{\binom{n+1}{2}\binom{k-1}{s}}{\binom{n}{s}} .
\qedhere
\]
\end{proof}

It follows that for many graphs, for example Paley graphs, 
almost all $s$-subsets are
identifying even when $s$ is not too far away from the smallest value where
there exists any identifying subset.
We illustrate this for $P(29)$ in Figure~\ref{probpic},
and state the following consequences. 
%for the Paley graphs.

\begin{center}
\begin{figure}
\caption{The bound in Theorem~\ref{prob} for the graphs in
$\Gr(29,16)$.} \label{probpic}
\begin{center}
\epsfig{file=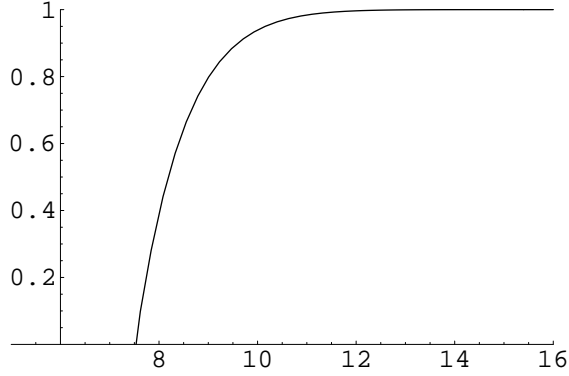}
\end{center}
\end{figure}
\end{center}

\begin{thm}\label{Tid}
  If $G\in\Gr(n,k)$ with $k\ge2$ and 
$s$ is an integer with %\newline
\[\log\binom{n+1}2/\log(n/(k-1))<s\le n,\]
then there exists an identifying  $s$-set of vertices of $G$.
\end{thm}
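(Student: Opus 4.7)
The plan is to derive Theorem \ref{Tid} as a direct consequence of Theorem \ref{prob} using a standard probabilistic existence argument: if a random $s$-subset is identifying with strictly positive probability, then at least one identifying $s$-subset must exist.

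First, I would estimate the ratio of binomial coefficients appearing in Theorem \ref{prob}. Writing
\begin{equation*}
\frac{\binom{k-1}{s}}{\binom{n}{s}} = \prod_{i=0}^{s-1}\frac{k-1-i}{n-i},
\end{equation*}
and noting that since $k\le n$ each factor satisfies $\frac{k-1-i}{n-i}\le\frac{k-1}{n}$ (this is equivalent to $(k-1)i\le ni$), we obtain the simple bound
\begin{equation*}
\frac{\binom{k-1}{s}}{\binom{n}{s}} \le \left(\frac{k-1}{n}\right)^s.
\end{equation*}

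Next I would substitute this into Theorem \ref{prob}, which gives that the probability a uniformly random $s$-subset of $V$ is identifying is at least $1-\binom{n+1}{2}\left(\frac{k-1}{n}\right)^s$. It suffices to show this is strictly positive under the stated hypothesis. Since $k\ge 2$ we have $k-1\ge 1$, and since $k\le n$ (from the very definition of $\Gr(n,k)$) we have $n/(k-1)\ge n/(n-1)>1$, so $\log(n/(k-1))>0$. Hence the strict inequality $\binom{n+1}{2}\bigl(\tfrac{k-1}{n}\bigr)^s<1$ rearranges exactly to $s\log(n/(k-1))>\log\binom{n+1}{2}$, i.e.\ to the hypothesis on $s$.

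Concluding: under the hypothesis, a random $s$-subset of $V$ is identifying with positive probability, and therefore at least one identifying $s$-subset must exist. There is really no obstacle here; the argument is essentially a one-line corollary of Theorem \ref{prob}, with the only technical input being the elementary bound $\binom{k-1}{s}/\binom{n}{s}\le((k-1)/n)^s$. (One may also note in passing that the result is only interesting for $s<k$, since for $s\ge k$ every $s$-subset is identifying simply because it contains a $k$-subset, which is identifying by assumption, and supersets of identifying sets are obviously identifying.)
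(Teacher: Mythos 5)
Your proof is correct and follows essentially the same route as the paper: the same bound $\binom{k-1}{s}/\binom{n}{s}\le((k-1)/n)^s$, the same application of Theorem~\ref{prob}, and the same trivial disposal of the case $s\ge k$. Nothing to add.
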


\begin{proof}
  If $s\ge k$, then every $s$-set will do, so suppose $s\le k-1$.
Then 
\begin{equation*}
\frac{ \binom{k-1}s}{\binom ns} \le \lrpar{\frac{k-1}n}^s 
<e^{-\log\binom{n+1}2},
\end{equation*}
and \refT{prob} shows that there is a positive probability that a
random $s$-set is identifying.
\end{proof}

\begin{thm}
For the Paley graphs,
\[\min\{|S|: \mbox{$S$ is identifying in  $P(q)$}\}=
\Theta(\log{q}).
\]
\end{thm}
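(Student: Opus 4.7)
The plan is to prove the two inequalities separately. For the upper bound, I would invoke \refT{Tid} applied to the Paley graph $P(q)$, which by \refT{paley} belongs to $\Gr(q,(q+3)/2)$. Setting $n=q$ and $k=(q+3)/2$, we have $k-1=(q+1)/2$, so
\begin{equation*}
\frac{n}{k-1}=\frac{2q}{q+1}\to 2 \qquad \text{as } q\to\infty,
\end{equation*}
and in particular $\log(n/(k-1))\ge c>0$ for all large $q$. Since $\log\binom{n+1}{2}=O(\log q)$, \refT{Tid} guarantees an identifying set of size $s=O(\log q)$ in $P(q)$.

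For the lower bound, the idea is a straightforward information-theoretic counting argument that does not use any special property of $P(q)$ beyond $|P(q)|=q$. If $S\subseteq V$ is identifying and $|S|=s$, then the map $v\mapsto I(S;v)=N[v]\cap S$ is an injection from $V$ into the power set $2^S$. Hence
\begin{equation*}
q = |V| \le 2^s,
\end{equation*}
which immediately gives $s\ge \log_2 q$.

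Combining the two bounds yields $\min\{|S|: S \text{ identifying in } P(q)\}=\Theta(\log q)$. The only real work lies in verifying that the hypothesis $\log\binom{n+1}{2}/\log(n/(k-1))<s$ of \refT{Tid} can indeed be met by some $s=O(\log q)$; this reduces to checking the elementary limit $\log(2q/(q+1))\to\log 2$ quoted above, so no substantial obstacle is anticipated.
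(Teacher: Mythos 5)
Your proposal is correct and follows essentially the same route as the paper: the upper bound via Theorems~\ref{paley} and~\ref{Tid} (the paper computes the same threshold $\log_2((q^2+q)/2)/\log_2(2q/(q+1))=2\log_2 q-1+o(1)$), and the lower bound via the injectivity of $v\mapsto I(S;v)$ (the paper additionally uses nonemptiness to get $\log_2(q+1)$, a negligible refinement). No gaps.
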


\begin{proof}
Theorems \ref{paley} and \ref{Tid} show that there is an identifying $s$-set
in $P(q)$ when $s>  \log_2 ((q^2+q)/2)/\log_2(2q/(q+1))=2\log_2(q)-1+o(1)$. The
lower bound $\log_2 (q+1)$ is clear since all the sets $I(v)$, $v\in
V$, must be nonempty and distinct.
\end{proof}

\section{On $\Gr(n,k,\ell)$}\label{Sell}

In this section we consider $\Gr(n,k,\ell)$ for $\ell\ge2$.
Let us denote 
\begin{equation*}
\Xi(k,\ell)=\max\{n : \Gr (n,k,\ell)\neq\emptyset\}.  
\end{equation*}
Trivially, the empty graph $E_k\in\Gr(k,k,\ell)$ for any $\ell\ge1$;
thus $\Xi(k,\ell)\ge $k.

Note that a graph $G=(V,E)$ with $|V|=n$
admits a $(1,\lex\ell)$-identifying set $\iff$ 
$V$ is $(1,\lex\ell)$-identifying 
$\iff$ $G\in\Gr(n,n,\ell)$.

\begin{thm}\label{Tcond}  
Suppose that $G=(V,E)\in \Gr(n,k,\ell)$,
where $n>k$ and $\ell\ge2$.
Then the following conditions hold:
\begin{romenumerate}
 \item For all $x\in V$ we have $\ell+1< n-k+\ell+1\le |N[x]|\le
 k-\ell$. 
In other words, $\gd_G\ge n-k+\ell$ and $\Delta_G\le k-\ell-1$.
 \item For all $x,y\in V$, $x\ne y$, $|N[x]\cap N[y]|\le k-2\ell+1$.
\item
$n\le 2k-2\ell-1$ and $k\ge 2\ell+2$. 
\end{romenumerate}
\end{thm}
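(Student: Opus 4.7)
The strategy is to apply \refT{karakterisointi} to carefully chosen pairs of distinct subsets $X \ne Y$ of size at most $\ell$. By \refL{L1}\ref{L1a}, $G \in \Gr(n, k, 1) = \Gr(n, k)$, so \refC{Cekaehto} is available, and every such pair satisfies $|N[X] \symdiff N[Y]| \ge n - k + 1 \ge 2$ (using $n > k$). For the lower bound in (i) I first rule out $|N[x]| \le \ell$: writing $N[x] = \{x, a_1, \ldots, a_d\}$ with $d \le \ell - 1$, take $X = N[x]$ and $Y = N[x] \setminus \{x\}$; both have sizes at most $\ell$ and are distinct, and since each $a_i \sim x$ gives $x \in N[a_i] \subseteq N[Y]$, one has $N[X] = N[Y]$, forcing an empty symmetric difference and contradicting the bound $\ge 2$. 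Hence $\deg x \ge \ell$; picking $\ell - 1$ distinct $y_1, \ldots, y_{\ell-1} \in N(x)$ and setting $X = \{y_1, \ldots, y_{\ell-1}\}$, $Y = X \cup \{x\}$, the symmetric difference reduces to $N[x] \setminus N[X]$ and omits each of $x, y_1, \ldots, y_{\ell-1}$ (each $y_i$ lies in $N[y_i]$, and $y_i \sim x$ gives $x \in N[y_i]$). Hence $|N[x]| - \ell \ge n - k + 1$, i.e.\ $|N[x]| \ge n - k + \ell + 1$; the strict inequality $\ell + 1 < n - k + \ell + 1$ is just $n > k$.

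For (ii), fix distinct $x, y \in V$, write $A = N[x] \cap N[y]$ and $M = V \setminus (N[x] \cup N[y])$. For any $Z \subseteq V \setminus \{x, y\}$ with $|Z| = \ell - 1$, taking $X = \{x\} \cup Z$ and $Y = \{y\} \cup Z$ (both of size $\ell$, distinct), the identity $(N[x] \cup N[Z]) \symdiff (N[y] \cup N[Z]) = (N[x] \symdiff N[y]) \setminus N[Z]$ combined with \refT{karakterisointi} yields
\begin{equation*}
  |A| + |M| + \bigabs{N[Z] \cap (N[x] \symdiff N[y])} \le k - 1.
\end{equation*}
The plan is to choose $Z$ forcing $|N[Z] \cap (N[x] \symdiff N[y])| \ge 2(\ell - 1)$: picking each $z_i$ inside $N[x] \symdiff N[y]$ contributes the $\ell - 1$ elements $z_i$ themselves, and the strong lower bound $|N[z_i]| \ge n - k + \ell + 1$ from (i) combined with \refC{Cekaehto} forces each $N[z_i]$ to reach additional witnesses in $N[x] \symdiff N[y]$. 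A short case analysis on whether $x \sim y$ completes the count and gives $|N[x] \cap N[y]| \le k - 2\ell + 1$.

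For the upper bound in (i) I mirror the lower-bound argument using non-neighbours of $x$: once (ii) is established, one verifies $|V \setminus N[x]| \ge \ell - 1$, then selects distinct $z_1, \ldots, z_{\ell-1} \in V \setminus N[x]$ and applies \refT{karakterisointi} to a pair $X \ne Y$ whose symmetric difference forces $|V \setminus N[x]| \ge n - k + \ell$, equivalently $|N[x]| \le k - \ell$. Finally, (iii) is immediate: $n - k + \ell + 1 \le |N[x]| \le k - \ell$ gives $n \le 2k - 2\ell - 1$, and $n \ge k + 1$ then yields $k \ge 2\ell + 2$. The main technical obstacle is (ii), specifically upgrading the natural count of $\ell - 1$ witnesses to $2\ell - 2$ in $N[Z] \cap (N[x] \symdiff N[y])$, which is where the minimum-degree bound from the lower half of (i) plays an essential role.
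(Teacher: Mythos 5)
Your proof of the lower bound in (i) is correct and complete: ruling out $|N[x]|\le\ell$ first and then applying \refT{karakterisointi} to $X=\{y_1,\dots,y_{\ell-1}\}\subseteq N(x)$ and $Y=X\cup\{x\}$ is a valid (and slightly slicker) variant of the paper's argument, which instead constructs an explicit bad $k$-set $C$ by deleting $n-k$ vertices starting inside $N[x]$. The trouble is with (ii), which you yourself identify as the crux and then leave as a plan rather than a proof — and the plan cannot be carried out. You want to choose $Z$ with $|Z|=\ell-1$ so that $|N[Z]\cap(N[x]\symdiff N[y])|\ge 2(\ell-1)$, but the only lower bound available on $|N[x]\symdiff N[y]|$ is $n-k+1$, which can be as small as $2$ (take $n=k+1$); for $\ell\ge 3$ the target set then simply does not contain $2(\ell-1)\ge 4$ elements, so no choice of $Z$ can produce that many witnesses. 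Worse, in the regime you are trying to exclude (once the degree upper bound is known, $|N[x]\cap N[y]|\ge k-2\ell+2$ forces $|N[x]\symdiff N[y]|\le 2\ell-4<2(\ell-1)$), the witnesses provably cannot exist, so the approach fails exactly where it is needed. The term $|M|$ cannot rescue the count either, since bounding $|M|$ from below is equivalent to what you are trying to prove. The paper's argument for (ii) is of a different nature: assuming $|N[x]\cap N[y]|\ge k-2\ell+2$, the set $A=N(y)\setminus N[x]$ has at most $\ell-2$ elements (this uses the \emph{upper} bound $|N[y]|\le k-\ell$), so one can take a $k$-set $C$ containing $A$ but avoiding $y$; then $N[y]\cap C\subseteq (N[x]\cup N[A])\cap C$, whence $I(C;A\cup\{x,y\})=I(C;A\cup\{x\})$ with both index sets of size at most $\ell$. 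This is a domination/absorption argument, not a witness count.

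There is also a structural problem with your ordering. You prove (ii) before the upper bound of (i), yet (as above) the only workable proof of (ii) I can see uses $\Delta_G\le k-\ell-1$; conversely, your proof of the upper bound of (i) invokes (ii) and is itself only a sketch ("a pair $X\ne Y$ whose symmetric difference forces $|V\setminus N[x]|\ge n-k+\ell$" — which pair?). The paper avoids this circularity by proving the degree upper bound directly from the lower bound alone: if $|N[x]|\ge k-\ell+1$, take $C=N[x]\cup\{c_1,\dots,c_j\}$ with $j=k-|N[x]|\le\ell-1$ and $a\in N(c_1)$ (nonempty by the lower bound), and observe $I(C;x,c_1,\dots,c_j)=C=I(C;x,a,c_2,\dots,c_j)$. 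You should adopt that order: lower bound of (i), then upper bound of (i), then (ii), then (iii) (which, as you say, follows immediately).
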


\begin{proof}
(i) 
Suppose first that there is a vertex $x\in V$  such that 
$|N[x]|\le n-k+\ell$. By removing $n-k$ vertices from $V$, starting in $\Nx$,
we find a $k$-subset $C$ with $I(C;x)=\{c_1,\ldots, c_{m}\}$
for some $m\le\ell$. 
If $m=0$, then $I(C;x)=I(C;\emptyset)$, which is impossible.
If $1\le m<\ell$, we can arrange (by removing $x$ first)
so that $x\notin C$, and thus
$x\notin Y=\set{c_1,\dots,c_m}$. Then
$I(C;\set{x}\cup Y)=I(C;Y)$, a contradiction.
If $m=\ell\ge2$, we can conversely arrange so that $x\in C$, and thus
$x\in I(C;x)$, say $c_1=x$.
Then $I(C;{c_2,\dots,c_m})=I(C;{c_1,\dots,c_m})$, another
contradiction. Consequently, $|\Nx|\ge n-k+\ell+1$.

%Since $|\Nx|\le n$, this implies that $k\ge \ell+1$. 

Suppose then $|N[x]|\ge k-\ell+1$. 
If $|\Nx|\ge k$, we can choose a $k$-subset $C$ of $\Nx$; then
$I(C;x)=C=I(C;{x,y})$ for any $y$, which is impossible.
If $k>|\Nx|\ge k-\ell+1$, we can choose a $k$-subset
$C=N[x]\cup\{c_1,\ldots c_{k-|N[x]|}\}$.
Choose also 
$a\in N(c_1)$ (which is possible because $\deg(c_1)\ge 1$ by (i)).
Now
$I(C;x,c_1,\ldots,c_{k-|N[x]|})=C=I(C;x,a,c_2,\ldots ,c_{k-|N[x]|})$,
which is impossible. 
%Here a vertex $a$ cannot dominate new vertices of the $k$-subset $C$
%because all of them are dominated already.  

(ii) 
Suppose to the contrary that there are
    $x,y\in V$, $x\ne y$, such that $|N[x]\cap N[y]|\ge
    k-2\ell+2$. 
Let $A=N(y)\setminus \Nx$. Then,
according to (i), 
$
|A|
\le|\Ny\setminus\Nx|
=|N[y]|-|N[x]\cap N[y]|
\le k-\ell-(k-2\ell+2)=\ell-2$. 
Since $k>\ell-2$ by (i), there is a $k$-subset $C\subseteq
V\setminus\set y$ such that $A\subset C$.
Then $I(C;A\cup\set{x,y})=I(C;A\cup\set x)$, a contradiction. 

(iii)
An immediate consequence of (i), which implies $n-k+\ell+1\le k-\ell$
and $\ell+1<k-\ell$.
\end{proof}

\begin{thm} \label{GoodL}
For $\ell\geq 2$, $\NN(k,\ell)\le \max\bigset{\frac{\ell}{\ell-1}(k-2),k}$.
\end{thm}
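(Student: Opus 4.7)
The plan is to dispose of the trivial case $n\le k$ (where the bound holds automatically since the right-hand side is $\ge k$) and then establish $n\le \ell(k-2)/(\ell-1)$ under the assumption $n>k$. In the latter case, Theorem \ref{Tcond}(i) already provides the two structural ingredients we need: $\delta_G\ge n-k+\ell\ge \ell+1\ge 3$, so $G$ has edges and sizable minimum degree.

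The key idea is to apply the $(1,\le\ell)$-identifying property of Theorem \ref{karakterisointi} not to pairs of singletons but to pairs of the form $(X,X\setminus\{x_i\})$ where $|X|=\ell$. The first step is therefore to produce a set $X=\{x_1,\dots,x_\ell\}$ in which the induced subgraph $G[X]$ has no isolated vertex. I will build $X$ greedily: start from an edge (possible since $\delta_G\ge 3$), and at each subsequent step pick some vertex $x\in X$ and adjoin a neighbor of $x$ that lies outside $X$. Because $\deg(x)\ge \ell+1$ and $|X|\le \ell-1$ when we add, such a neighbor always exists, and by construction every vertex of the final $X$ has at least one neighbor inside $X$.

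For each $i$, apply Theorem \ref{karakterisointi} to $X$ and $X\setminus\{x_i\}$, both of size $\le\ell$. Since $N[X\setminus\{x_i\}]\subseteq N[X]$, the symmetric difference reduces to
\[
D_i := N[X]\setminus N[X\setminus\{x_i\}] = N[x_i]\setminus N[X\setminus\{x_i\}],
\]
and the identifying condition forces $|D_i|\ge n-k+1$. Crucially, the $\ell$ sets $D_i$ are pairwise disjoint, because $v\in D_i$ belongs to $N[x_i]$ but to no other $N[x_j]$. By our choice of $X$, each $x_k\in X$ has some neighbor $x_j\in X$ with $j\ne k$, so $x_k\in N[x_k]\cap N[x_j]$; in other words $x_k$ is in at least two of the $N[x_i]$, so $x_k\notin D_i$ for any $i$, yet $x_k\in N[X]$.

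Combining the disjoint contributions,
\[
n \ge |N[X]| \ge \sum_{i=1}^{\ell}|D_i| + |X| \ge \ell(n-k+1)+\ell,
\]
which on rearrangement is exactly $(\ell-1)n\le \ell(k-2)$, finishing the proof. The conceptual hurdle is spotting that $X$ versus $X\setminus\{x_i\}$ is a strong pair to feed to the identifying property, and that the extra $+\ell$ term (coming from $X$ being covered twice inside $N[X]$) is precisely what improves the naive bound $\ell(k-1)/(\ell-1)$ down to $\ell(k-2)/(\ell-1)$; the greedy construction that guarantees no isolated vertex in $G[X]$ is the mechanism that delivers this extra term.
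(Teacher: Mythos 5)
Your proof is correct. It shares its overall strategy with the paper's proof---both lower-bound $|N[X]|$ for an $\ell$-set $X$ by $\ell(n-k+2)$ using Theorem~\ref{karakterisointi} on pairs of sets differing in one element, then conclude from $|N[X]|\le n$---but the decomposition used to extract the crucial extra $+\ell$ is genuinely different. The paper takes an \emph{arbitrary} $\ell$-set $Z=\{z_1,\dots,z_\ell\}$ and telescopes along the nested chain $N[z_1]\subseteq N[z_1,z_2]\subseteq\cdots\subseteq N[Z]$: each step contributes at least $n-k+1$ new vertices, and the extra $\ell$ comes from the stronger first-step bound $|N[z_1]|\ge n-k+\ell+1$ supplied by Theorem~\ref{Tcond}(i). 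You instead compare $X$ with each $X\setminus\{x_i\}$ to obtain $\ell$ pairwise disjoint ``private'' sets $D_i$ of size at least $n-k+1$ each, and recover the extra $\ell$ from the observation that no vertex of $X$ lies in any $D_i$---which forces you to choose $X$ carefully (greedily, so that $G[X]$ has no isolated vertex), whereas the paper's argument needs no such choice. Your route uses only the weaker per-pair bound $n-k+1$ plus the degree bound $\deg(x)\ge\ell+1$ (to run the greedy construction), while the paper leans on the sharper vertex-degree estimate of Theorem~\ref{Tcond}(i) once; both are legitimate, and your disjointness argument for the $D_i$ and the verification that $x_m\notin D_i$ for all $i,m$ are sound.
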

\begin{proof}
If $\,\Xi(k,\ell)=k$, there is nothing to prove.
Assume then that there exists a graph
$G=(V,E)\in \Gr(n,k,\ell)$, where $n> k$. 
By \refT{Tcond}(iii), $\ell<k/2<n$. 
Let us consider any set of
vertices $Z=\{z_1,z_2,\dots,z_\ell\}$ of size $\ell$. We will
estimate $|N[Z]|$ as follows. By \refT{Tcond}(i)
we know $|N[z_1]|\ge
n-k+\ell+1$. Now $N[z_1,z_2]$ must contain at least $n-k+1$
vertices, which \emph{do not} belong to $N[z_1]$ due to
\refT{karakterisointi} which says that $|N[X]\symdiff N[Y]|\ge
  n-k+1$, where we take $X=\{z_1\}$ and $Y=\{z_1,z_2\}$. 
Analogously, each set $N[z_1,\dots, z_i]$ $(i=2,\dots,\ell)$ must
contain at least $n-k+1$ vertices which are not in
$N[z_1,\dots,z_{i-1}]$. Hence, for the set $Z$ we have 
$|N[Z]|\ge n-k+\ell+1+(\ell-1)(n-k+1)=\ell (n-k+2) $. 
Since trivially $|N[Z]|\leq n$, 
we have $(\ell-1)n\le \ell(k-2)$, and 
the claim follows.
\end{proof}

\begin{cor}For $\ell\ge 2$, we have
$%\limsup_{k\rightarrow \infty} 
\frac{\Xi(k,\ell)}{k}\leq
1+\frac{1}{\ell-1}$.
\end{cor}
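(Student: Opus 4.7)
The plan is to derive this as an immediate consequence of the preceding \refT{GoodL}, which gives the bound $\Xi(k,\ell) \le \max\bigset{\frac{\ell}{\ell-1}(k-2),\,k}$. Dividing both sides by $k$ yields
\begin{equation*}
\frac{\Xi(k,\ell)}{k} \le \max\Bigset{\frac{\ell}{\ell-1}\cdot\frac{k-2}{k},\,\,1}.
\end{equation*}
It then suffices to check that each term inside the max is at most $1+\frac{1}{\ell-1} = \frac{\ell}{\ell-1}$. For the first term, I would use $(k-2)/k \le 1$, so $\frac{\ell}{\ell-1}\cdot\frac{k-2}{k}\le \frac{\ell}{\ell-1}$. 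For the second term, since $\ell\ge 2$ we have $\frac{1}{\ell-1}\ge 0$, so $1\le \frac{\ell}{\ell-1}$.

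There is no real obstacle here; the only point to handle with a bit of care is that \refT{GoodL} has two regimes (the $\max$), and one must verify both regimes separately against the claimed bound $1+\frac{1}{\ell-1}$. Since this bound is achieved in the limit $k\to\infty$ by the first regime, the corollary is essentially a restatement of \refT{GoodL} in asymptotic form.
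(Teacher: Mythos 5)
Your proof is correct and is precisely the intended derivation: the paper states this corollary without proof as an immediate consequence of Theorem~\ref{GoodL}, obtained by dividing the bound $\max\bigl\{\frac{\ell}{\ell-1}(k-2),\,k\bigr\}$ by $k$ and noting that both terms of the maximum are at most $\frac{\ell}{\ell-1}=1+\frac{1}{\ell-1}$. Your handling of the two regimes of the max is exactly the right (and only) point requiring any care.
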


The next results improve the result of Theorem~\ref{GoodL} for
$\ell=2$.

\begin{lemma} \label{Ll2}
Assume that $n>k$. % and $\ell=2$.
Let $G=(V,E)$ belong to $\Gr (n,k,2)$. Then
\[n+\frac{n-k+2}{n-1}(n-k+3)\le 2k-3\]
\end{lemma}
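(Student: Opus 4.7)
The plan is to double-count the quantity $\sum_{v\in V}\binom{|N[v]|}{2}$ in two different ways.

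For the lower bound, \refT{Tcond}(i) applied with $\ell=2$ gives $|N[v]|\ge n-k+3$ for every $v\in V$, so
\[
\sum_{v\in V}\binom{|N[v]|}{2} \;\ge\; n\binom{n-k+3}{2} \;=\; \frac{n(n-k+3)(n-k+2)}{2}.
\]

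For the upper bound, switching the order of summation gives
\[
\sum_{v\in V}\binom{|N[v]|}{2} \;=\; \sum_{\{w_1,w_2\}}|N[w_1]\cap N[w_2]|,
\]
where the second sum is over unordered pairs of distinct vertices. The crucial input is a sharp estimate on $|N[w_1]\cap N[w_2]|$: separating the two distinct sets $\{w_1\}$ and $\{w_1,w_2\}$ (both of size $\le\ell=2$) yields $|N[w_2]\setminus N[w_1]|\ge n-k+1$, and combining this with $|N[w_2]|\le k-2$ from \refT{Tcond}(i) gives the refined bound
\[
|N[w_1]\cap N[w_2]| \;=\; |N[w_2]| - |N[w_2]\setminus N[w_1]| \;\le\; (k-2)-(n-k+1) \;=\; 2k-n-3.
\]
Since $n>k$, this is strictly sharper than the $k-3$ supplied by \refT{Tcond}(ii). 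Summing over the $\binom{n}{2}$ pairs therefore gives $\sum_{v\in V}\binom{|N[v]|}{2}\le \frac{n(n-1)(2k-n-3)}{2}$.

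Combining the two estimates, dividing by $n/2$, and rearranging yields $(n-k+2)(n-k+3)\le (n-1)(2k-n-3)$, which upon division by $n-1$ and addition of $n$ to both sides is precisely the claimed inequality $n+\frac{(n-k+2)(n-k+3)}{n-1}\le 2k-3$. The main obstacle — and the only real subtlety — is spotting that one must \emph{not} use the common-neighbourhood bound $|N[w_1]\cap N[w_2]|\le k-3$ from \refT{Tcond}(ii); the argument goes through only with the refined bound $2k-n-3$, which is strictly smaller precisely under the lemma's hypothesis $n>k$.
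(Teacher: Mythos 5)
Your proof is correct, but it reaches the inequality by a genuinely different route than the paper. You double-count $\sum_{v\in V}\binom{|N[v]|}{2}=\sum_{\{w_1,w_2\}}|N[w_1]\cap N[w_2]|$ globally: the degree lower bound $|N[v]|\ge n-k+3$ from \refT{Tcond}(i) gives the lower estimate, and the refined pairwise bound $|N[w_1]\cap N[w_2]|\le (k-2)-(n-k+1)=2k-n-3$ (obtained from \refT{karakterisointi} with $X=\{w_1\}$, $Y=\{w_1,w_2\}$ together with $|N[w_2]|\le k-2$) gives the upper estimate; the quotient of the two is exactly $f(n,k)=\frac{(n-k+2)(n-k+3)}{n-1}$. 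The paper instead fixes a vertex $x$ and runs a local pigeonhole argument --- either some $a\in N(x)$ already satisfies $|N[x]\cap N[a]|\ge f(n,k)$, or else counting the edges between $N(x)$ and $S_2(x)$ and dividing by $|S_2(x)|\le k-3$ produces a vertex $z\in S_2(x)$ with $|N[x]\cap N[z]|\ge f(n,k)$ --- and then exhibits the explicit non-identifying set $C=(N[x]\cap N[z])\cup(V\setminus N[x])$, forcing $|C|\le k-1$. Your uniform bound $2k-n-3$ and the paper's bad-set argument are equivalent (both amount to $|N[w_2]\setminus N[w_1]|\ge n-k+1$), so the substance differs only in how the ``heavy'' pair is located: your averaging over all $\binom{n}{2}$ pairs is shorter and dispenses with the case split and the sphere-by-sphere edge count, while the paper's version additionally localizes the heavy pair to within distance $2$ of an arbitrary prescribed vertex --- extra structural information that the lemma does not need. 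One could quibble that you should note $n-1>0$ before dividing (immediate, since $n>k\ge 6$ by \refT{Tcond}(iii)), but that is cosmetic.
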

\begin{proof}
Suppose $x\in V$. Let
\[f(n,k)=\frac{n-k+2}{n-1}(n-k+3).\]
Our aim is first to show that there exists a vertex  in $N(x)$ or in
$S_2(x)$ which dominates at least $f(n,k)$ vertices of $\Nx$.
Let %us denote
\[\lambda_x=\max\{|\Nx\cap N[a]|\mid a\in N(x)\}.\]
If $\lambda_x\geq f(n,k)$, we are already done. But if
$\lambda_x<f(n,k)$, then we show that there is a vertex in
$S_2(x)$  that dominates at least $f(n,k)$ vertices of
$\Nx$. Let us estimate the number of edges between the vertices in
$N(x)$ and in $S_2(x)$
--- we denote this number by $M$. 
By \refT{Tcond}(i),
every vertex $y\in N(x)$ yields at
least $|\Ny|-\lambda_x\geq n-k+3-\lambda_x$ such edges and
there are at least $n-k+2$ vertices in $N(x)$. Consequently,
$M\geq (n-k+2)(n-k+3-\lambda_x)$. 
On the other hand, again by \refT{Tcond}(i), $|S_2(x)|\le n-|\Nx|\le k-3$.
Hence, there must exist a vertex in $S_2(x)$ 
incident with at least $M/(k-3)$ edges whose other endpoint is in $N(x)$.
Now, if $\gl_x<f(n,k)$, then
\begin{equation*}
  \frac{M}{k-3} > 
\frac{(n-k+2)(n-k+3-f(n,k))}{k-3}=f(n,k).
\end{equation*}
Hence there exists in this case a vertex
in $S_2(x)$ that is incident to at least $f(n,k)$ such edges, i.e., it
dominates at least $f(n,k)$ vertices in $N(x)$.

In any case there thus exists $z\neq x$ such that $|\Nx\cap \Nz|\ge
f(n,k)$. Let $C=(\Nx\cap\Nz)\cup(V\setminus\Nx)$.
Then $I(C;x,z)=I(C;z)$, so $C$ is not $(1,\lex2)$-identifying and thus
$|C|<k$.
Hence, using \refT{Tcond}(i),
\begin{equation*}
  k-1\ge|C|\ge f(n,k)+n-|\Nx|\ge f(n,k)+n-(k-2),
\end{equation*}
and thus $n+f(n,k)\le 2k-3$ as asserted.
\end{proof}

\begin{thm}\label{Tl2}
  If $k\le 5$, then $\Xi(k,2)=k$. If $k\ge6$, then
$$\Xi(k,2)< \Bigpar{1+\frac{1}{\sqrt{2}}}(k-2)+\frac14.$$
\end{thm}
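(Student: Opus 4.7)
The plan is to split the statement into two cases according to whether the parameter $k$ forces $\NN(k,2)=k$ or one must actually bound graphs with $n>k$. For the range $k\le 5$, the trivial lower bound $\NN(k,2)\ge k$ comes from $E_k\in\Gr(k,k,2)$. For the matching upper bound, I would invoke \refT{Tcond}(iii): any graph in $\Gr(n,k,2)$ with $n>k$ must satisfy $k\ge 2\ell+2=6$, and so no such graph exists when $k\le 5$. This immediately gives $\NN(k,2)=k$ in the first range.

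For $k\ge 6$, the core estimate is already prepared by \refL{Ll2}: for every graph in $\Gr(n,k,2)$ with $n>k$ we have
\begin{equation*}
n+\frac{(n-k+2)(n-k+3)}{n-1}\le 2k-3.
\end{equation*}
I would clear the denominator (note $n-1>0$) and collect everything as a polynomial in $n$. After expanding the two products this reduces to the quadratic inequality
\begin{equation*}
2n^2-(4k-7)n+(k^2-3k+3)\le 0,
\end{equation*}
which in turn implies
\begin{equation*}
n\le\frac{(4k-7)+\sqrt{8k^2-32k+25}}{4}.
\end{equation*}

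The key observation, which extracts both constants in the target bound, is the identity $8k^2-32k+25=8(k-2)^2-7$. Since $-7<0$, we get the strict inequality $\sqrt{8k^2-32k+25}<2\sqrt{2}\,(k-2)$, while $(4k-7)/4=(k-2)+\tfrac14$. Combining these yields exactly
\begin{equation*}
n<\Bigpar{1+\tfrac{1}{\sqrt{2}}}(k-2)+\tfrac14.
\end{equation*}
To finish, I would verify that the case $n=k$ is also covered: the required inequality $k<(1+1/\sqrt{2})(k-2)+1/4$ reduces to $(k-2)/\sqrt{2}>7/4$, which holds for all $k\ge 6$ since even $k=6$ gives $2\sqrt{2}>7/4$.

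The computation is almost entirely routine, so there is no real obstacle beyond bookkeeping. The only place requiring a small trick is spotting the rewriting $8k^2-32k+25=8(k-2)^2-7$, which is what produces the precise constants $1+1/\sqrt{2}$ and $1/4$ appearing in the theorem; once this is in place, the quadratic formula does the rest.
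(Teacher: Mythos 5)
Your proof is correct and follows essentially the same route as the paper: both parts rest on \refT{Tcond}(iii) for $k\le5$ and on \refL{Ll2} for $k\ge6$, reducing to the same quadratic inequality in $n$; the paper completes the square (writing $m=k-2$ and obtaining $2\bigl(n-(m+\tfrac14)\bigr)^2\le m^2-\tfrac78$) where you apply the quadratic formula and the identity $8k^2-32k+25=8(k-2)^2-7$, which is the same algebra in a different guise. Your explicit check that the case $n=k$ also satisfies the bound for $k\ge6$ is a small point of extra care that the paper's proof leaves implicit.
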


\begin{proof}
  Let $n=\NN(k,2)$, and let $m=k-2$. If $n>k$, then $k\ge6$ by
  \refT{Tcond}(iii); hence $n=k$ when $k\le5$.
Further, still assuming $n>k$, \refL{Ll2} yields
\begin{equation*}
  n+\frac{(n-m)(n-m+1)}{n-1}\le 2m+1
\end{equation*}
or
\begin{equation*}
  0\ge n(n-1)+(n-m)^2+n-m-(2m+1)(n-1)
=
2\bigpar{n-(m+\tfrac14)}^2-m^2+\tfrac78.
\end{equation*}
Hence, $n-(m+\frac14)<m/\sqrt2$.
\end{proof}

\begin{cor}
For $\ell=2$, we have
$ \Xi(k,2)/k\leq 1+\frac{1}{\sqrt{2}}$.
\end{cor}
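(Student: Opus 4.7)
The plan is to deduce this corollary directly from \refT{Tl2}, splitting into the same two cases.

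First I would dispose of the small case. For $2 \le k \le 5$, \refT{Tcond}(iii) combined with \refT{Tl2} gives $\Xi(k,2)=k$ (indeed, \refT{Tcond}(iii) already forces $k \ge 6$ whenever $n>k$), so $\Xi(k,2)/k = 1$, which is trivially below $1+1/\sqrt{2}$.

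Next, for $k \ge 6$, I would simply divide the bound from \refT{Tl2} by $k$. Explicitly,
\begin{equation*}
\frac{\Xi(k,2)}{k} < \frac{(1+1/\sqrt{2})(k-2) + 1/4}{k}
= \Bigpar{1+\frac{1}{\sqrt{2}}} + \frac{1/4 - 2(1+1/\sqrt{2})}{k},
\end{equation*}
and since $2(1+1/\sqrt{2}) > 1/4$, the second term is strictly negative, so $\Xi(k,2)/k < 1+1/\sqrt{2}$.

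There is essentially no obstacle here; the only thing to check is that the ``$-2(1+1/\sqrt{2})+1/4$'' correction coming from \refT{Tl2} is negative, which is immediate. The whole corollary is just a repackaging of \refT{Tl2} in asymptotic form.
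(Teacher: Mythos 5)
Your proof is correct and is exactly the intended derivation: the paper states this corollary as an immediate consequence of Theorem~\ref{Tl2}, and your division by $k$ together with the observation that $1/4 - 2(1+1/\sqrt{2}) < 0$ (plus the trivial case $k\le 5$) is precisely that routine verification.
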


\begin{problem}
  What is $\limsup_{k\rightarrow \infty}\Xi(k,\ell)/k$ for $\ell\ge2$?
In particular, is $\limsup_{k\rightarrow \infty}\Xi(k,\ell)/k>1$?
\end{problem}

The following theorem implies that for any $\ell\ge2$ there exist graphs in
$\Gr(n,k,\ell)$ for $n\approx k +\log_2 k$. In
particular, we have such graphs with $n>k$.

\begin{thm}
 Let $\ell\ge 2$ and $m\ge \max\{2\ell-2,4\}$. A binary hypercube of
 dimension $m$ belongs to $\Gr(2^m,2^m-m+2\ell-2,\ell)$ 
\end{thm}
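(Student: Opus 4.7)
The plan is to apply \refT{karakterisointi}: it suffices to show that for every pair of distinct $X, Y \subseteq V = \mathbb{F}_2^m$ with $|X|, |Y| \le \ell$, we have
\[
|N[X] \symdiff N[Y]| \ge n - k + 1 = m - 2\ell + 3,
\]
where $n = 2^m$. The backbone is the elementary fact that in the hypercube, any two distinct vertices $u, v$ satisfy $|N[u] \cap N[v]| \le 2$, with equality precisely when $d(u, v) \le 2$ (a common neighbour must lie on a shortest $uv$-path, of which there are at most two).

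Write $A = X \cap Y$, $B = X \setminus Y$, $C = Y \setminus X$; a direct check gives $N[X] \symdiff N[Y] = (N[B] \symdiff N[C]) \setminus N[A]$. The easy case is when one of $B, C$ is empty: without loss of generality $B = \emptyset$, pick $c \in C$, and use $|N[c] \cap N[X]| \le 2|X| \le 2(\ell - 1)$ to deduce $|N[c] \setminus N[X]| \ge (m+1) - 2(\ell-1) = m - 2\ell + 3$; this set lies in $N[X] \symdiff N[Y]$. If $B, C$ are both nonempty and some pair $b \in B, c \in C$ satisfies $d(b, c) \ge 3$, then $N[b] \cap N[c] = \emptyset$, so $|N[b] \cap N[Y]| \le 2(|Y| - 1) \le 2(\ell - 1)$ and the same estimate applies to $|N[b] \setminus N[Y]|$.

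The remaining case -- $B, C \ne \emptyset$ with every $b \in B, c \in C$ at distance $\le 2$ -- is delicate and needs the auxiliary inequality: for any $a \notin \{b, c\}$,
\[
|N[a] \cap (N[b] \symdiff N[c])| \le 2.
\]
By inclusion--exclusion, $|N[a] \cap (N[b] \symdiff N[c])| = |N[a] \cap N[b]| + |N[a] \cap N[c]| - 2|N[a] \cap N[b] \cap N[c]|$, so this reduces to showing $N[a] \cap N[b] \cap N[c] \ne \emptyset$ whenever $d(a, b), d(a, c), d(b, c) \le 2$, a Helly-type property for unit balls in the hypercube that is verified by a short case analysis on whether $d(b, c)$ equals $1$ or $2$. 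With this in hand, fix such $b, c$; since $(N[b] \symdiff N[c]) \setminus N[X \cup Y \setminus \{b, c\}] \subseteq N[X] \symdiff N[Y]$, summing the auxiliary bound over $X \cup Y \setminus \{b, c\}$ yields
\[
|N[X] \symdiff N[Y]| \ge (2m - 2) - 2(|X \cup Y| - 2) = 2m + 2 - 2|X \cup Y|,
\]
which meets the required bound whenever $|X \cup Y|$ is bounded away from $2\ell$.

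The main obstacle is the boundary configuration with $X, Y$ disjoint, $|X| = |Y| = \ell$, and $m \in \{2\ell - 2, 2\ell - 1, 2\ell\}$, where the general bound is slack. Here the cross-distance hypothesis is very restrictive: if every distance in $B \times C$ equals $2$, then $B \cap N[C] = C \cap N[B] = \emptyset$, forcing $B \subseteq N[B] \setminus N[C]$ and $C \subseteq N[C] \setminus N[B]$ and hence $|N[X] \symdiff N[Y]| \ge 2\ell \ge m - 2\ell + 3$ (valid since $m \le 4\ell - 3$ for $\ell \ge 2$). The subcase where some cross-distance equals $1$ is handled by picking such an adjacent pair and exploiting the involution $v \mapsto v + (b + c)$, which is fixed-point-free on $N[b] \symdiff N[c]$ and partitions it into $m - 1$ doubletons, so that each further closed neighbourhood can be shown to contribute at most a single element to the loss outside the already-controlled pair structure.
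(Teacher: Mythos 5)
Your overall strategy---verifying the condition of \refT{karakterisointi} directly for every pair $X\neq Y$ with $|X|,|Y|\le\ell$---is genuinely different from the paper's. The paper disposes of $\ell\ge3$ in two lines by quoting the characterization from \cite{laihonen} (a subset of the hypercube is $(1,\lex\ell)$-identifying, $\ell\ge 3$, if and only if every vertex is dominated by at least $2\ell-1$ of its elements), and then runs a separate hand analysis for $\ell=2$ organized by the size of $I(x)$. Your first stages are correct and cleanly argued: the decomposition via $A=X\cap Y$, $B=X\setminus Y$, $C=Y\setminus X$, the case of a nested pair, the case of a cross-pair at distance $\ge3$, the Helly-type bound $|N[a]\cap(N[b]\symdiff N[c])|\le2$, and the resulting estimate $|N[X]\symdiff N[Y]|\ge 2m+2-2|X\cup Y|$ all check out.

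The proof is incomplete, however, exactly where the difficulty is concentrated. First, your list of boundary configurations is too small: the inequality $2m+2-2|X\cup Y|\ge m-2\ell+3$ also fails when $|X\cup Y|=2\ell-1$ and $m=2\ell-2$, which admits configurations with $A=X\cap Y\neq\emptyset$; there your ``all cross-distances equal $2$'' argument, which places $B\cup C$ inside $N[B]\symdiff N[C]$, does not immediately finish, because those vertices may be swallowed by $N[A]$. Second, and more seriously, the final subcase (an adjacent cross-pair $b\sim c$ in a boundary configuration) is only a sketch: the assertion that ``each further closed neighbourhood contributes at most a single element to the loss outside the already-controlled pair structure'' is neither precisely formulated nor proved, and this is precisely where the slack is zero or negative. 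Indeed $N[b]\symdiff N[c]$ has $2(m-1)$ elements, each of the $|X\cup Y|-2$ remaining closed neighbourhoods may meet it in $2$ of them, and for $m\in\set{2\ell-2,\,2\ell-1,\,2\ell}$ with $|X\cup Y|=2\ell$ the naive count guarantees at most $2m-2|X\cup Y|+2\le 2$ survivors while you need up to $m-2\ell+3=3$. One can sharpen this: since the hypercube has no triangles, a vertex $a\notin\set{b,c}$ contains a whole doubleton in $N[a]$ only if $a$ is itself one of its two elements, and otherwise (by the parity of $d(a,b)+d(a,c)$, which is odd) $N[a]$ meets only the $b$-side or only the $c$-side of the symmetric difference. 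Even so, the number of fully destroyed doubletons can a priori reach $|X\cup Y|-2=m$, which exceeds the $m-1$ doubletons available, so a genuinely finer structural argument (or a cleverer choice of the pair $b,c$) is still needed. As written, this step is a gap rather than a routine verification.
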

\begin{proof} Suppose first $\ell\ge 3$.
By \cite{laihonen} we know that then a set in a binary hypercube is
$(1,\lex\ell)$-identifying if and only if every vertex is dominated
by at least $2\ell-1$ different vertices belonging to the set.
Hence, we can remove any $m+1-(2\ell-1)$ vertices from the graph, and
there will still be a big enough multiple domination to assure that
the remaining set is $(1,\lex\ell)$-identifying.

Suppose then that $\ell=2$ and $G=(V,E)$ is the binary $m$-dimensional
hypercube.  Let us denote by $C\subseteq V$ a $(2^m-m+2)-$subset.
Every vertex is dominated by at least $m+1-(m-2)=3$ vertices of $C$.
For all $x,y\in V$, $x\ne y$ we have $|N[x]\cap N[y]|=2$ if and only
if $1\le d(x,y)\le 2$ and otherwise $|N[x]\cap N[y]|=0$. Hence,
for all $x,y,z\in V$ with $x\ne y$, $I(y)=\Ny\cap C$ contains at least 3
vertices, and these cannot all be dominated by $x$; thus,
we have $I(x)\ne I(y)$ and $I(x)\ne I(y,z)$. 

We still need to show that
$I(x,y)\ne I(z,w)$ for all $x,y,z,w\in V$, $x\ne y$, $z\ne w$,
$\{x,y\}\ne \{z,w\}$. 
By symmetry we may assume that $x\not\in\{z,w\}$. Suppose $I(x,y)=I(z,w)$. 

If $|I(x)|\ge 5$, then any two vertices $z,w\ne x$ cannot dominate
$I(x)$, a contradiction.

If $|I(x)|=4$, then $|I(z)\cap I(x)|=|I(w)\cap I(x)|=2$ and $I(x)\cap I(z)\cap
I(w)=\emptyset$. It follows that $3\le d(z,w)\le 4$ which implies $I(z)\cap
I(w)=\emptyset$. 
Since $|\Nx\setminus C|=|\Nx|-|I(x)|=m-3$, 
all except one vertex, say $v$, of $V\setminus C$ belong to
$N[x]$, so $V\setminus\Nx\subseteq C\cup\set v$; the vertex $v$ cannot
belong to both $\Nz$ and $\Nw$ since 
these are disjoint, so we may (w.l.o.g.)\ assume that $v\notin\Nz$,
and thus 
$\Nz\setminus\Nx\subseteq C$, whence
$\Nz\setminus\Nx\subseteq I(z)\setminus I(x)$.
Hence,
$|I(z)\cap I(y)|\ge |I(z)\setminus I(x)|\ge|\Nz\setminus\Nx|
=|\Nz|-|\Nz\cap\Nx|=m+1-2\ge 3$. 
Thus  $y=z$; however, then $I(y)\cap I(w)=I(z)\cap I(w)=\emptyset$ and
since $I(w)\not\subseteq I(x)$, we have $I(w)\not\subseteq I(x,y)$.

Suppose finally that $|I(x)|=3$; w.l.o.g.\ we may assume $|I(z)\cap I(x)|=2$.
Now 
$|\Nx\setminus C|=|\Nx|-|I(x)|=m-2=|V\setminus C|$, 
and thus
$V\setminus C=\Nx\setminus C\subseteq N[x]$;
hence, $V\setminus N[x]\subseteq C$ and thus
$\Nz\setminus \Nx\subseteq I(z)\setminus I(x)$.
Consequently, 
$|I(z)\cap I(y)|\ge |I(z)\setminus I(x)|\ge|\Nz\setminus \Nx|\ge m+1-2\ge 3$,
and thus $z=y$. But 
similarly $\Nw\setminus \Nx\subseteq I(w)\setminus I(x)$ and
the same argument shows $w=y$, and thus $w=z$, a
contradiction.
\end{proof}

We finally consider graphs without isolated vertices
(i.e., no vertices with degree zero), and in particular connected graphs.

By \cite[Theorem 8]{lr} a graph with no isolated vertices 
admitting a $(1,\lex\ell)$-identifying set
has minimum degree at least $\ell$. 
Hence, always $n\ge \ell+1$. 

In \cite{GM:ConsIdSets} and
\cite{L:cage} it has been proven that there exist connected graphs which admit
$(1,\lex\ell)$-identifying set. For example, the smallest known connected graph
admitting a $(1,\lex 3)$-identifying set has 16 vertices %\kolla
\cite{L:cage}. It is unknown whether there are such graphs with smaller order.
In the next theorem we solve the case of graphs admitting $(1,\lex
2)$-identifying sets.

\begin{thm} 
The smallest $n\ge2$ such that there exists a connected graph (or a
graph without isolated vertices) in $\Gr(n,n,2)$ is $n=7$.
\end{thm}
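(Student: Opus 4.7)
My plan is to establish the lower bound $n \ge 7$ via degree conditions and a finite case analysis, and then exhibit $C_7$ as a member of $\Gr(7,7,2)$. I first extract two degree constraints for any $G=(V,E)\in\Gr(n,n,2)$ with $\delta(G)\ge 1$. If $v$ is a leaf with unique neighbor $u$, then $N[v]=\set{u,v}\subseteq N[u]$, so $N[\set{u,v}]=N[\set{u}]$, contradicting $(1,\lex 2)$-identification; hence $\delta(G)\ge 2$. For $n\ge 3$, I also claim $\Delta(G)\le n-3$: a universal vertex $v$ gives $N[\set{u,v}]=V=N[\set v]$ for any $u\ne v$; and if $\deg(v)=n-2$ with unique non-neighbor $w$, then either some $u\ne v,w$ satisfies $w\notin N[u]$ (so $N[u]\subseteq N[v]$ and $N[\set{u,v}]=N[\set v]$), or else $\deg(w)=n-2$ as well, in which case $N[\set{v,u}]=V$ for every $u\ne v$, producing $n-1\ge 2$ distinct pairs with equal closed-neighborhood. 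Together these force $2\le\delta\le\Delta\le n-3$, immediately excluding $n=2,3,4$.

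For $n=5$ the degree constraints force $G=C_5$, but there the antipodal pairs $\set{1,3}$ and $\set{2,4}$ both have closed-neighborhood $V$, a contradiction. For $n=6$ the degrees lie in $\set{2,3}$, and up to isomorphism there are exactly ten such graphs without isolated vertices: the two $2$-regular graphs $C_6$ and $C_3\sqcup C_3$; the two cubic graphs $K_{3,3}$ and the prism $K_3\Box K_2$; the three theta graphs $\Theta_{1,3,3}$, $\Theta_{1,2,4}$, $\Theta_{2,2,3}$ of degree sequence $(3,3,2,2,2,2)$; and the three cubic-minus-an-edge graphs ($K_{3,3}-e$, prism minus a triangle edge, prism minus a matching edge) of degree sequence $(3,3,3,3,2,2)$.

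In each of these ten graphs a direct inspection produces two distinct subsets $X,Y$ of size at most $2$ with $N[X]=N[Y]$: twin vertices within each triangle of $C_3\sqcup C_3$; the coincidence $N[\set{a,c}]=N[\set{a}]$ in $\Theta_{1,2,4}$, where the degree-$2$ vertex $c$ on the length-$2$ branch satisfies $N[c]\subset N[a]$; the coincidence $N[\set{c,e_1}]=N[\set{c,e_2}]$ in $\Theta_{2,2,3}$; and in each of the remaining seven graphs at least two distinct pairs $\set{x,y}$ with $N[\set{x,y}]=V$ (for example the three antipodal pairs of $C_6$, several cross-side pairs of $K_{3,3}$, or the pairs $\set{a,b}, \set{c_1,d_2}, \set{c_2,d_1}$ in $\Theta_{1,3,3}$). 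Every such coincidence violates $(1,\lex 2)$-identification, completing the lower bound.

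For the construction at $n=7$, I take $G=C_7$ on $\mathbb Z_7$ with $i\sim i+1$. Every nonempty $N[X]$ with $|X|\le 2$ is a consecutive cyclic arc whose length is determined by $|X|$ and the cyclic distance between the elements of $X$: singletons yield $3$-arcs, and pairs at cyclic distance $1$, $2$, $3$ yield arcs of lengths $4$, $5$, $6$ respectively (the $6$-arcs being exactly the sets $V\setminus\set{v}$). These four length-classes are disjoint, and within each class the seven rotations yield seven distinct arcs, so together with $N[\emptyset]=\emptyset$ all $1+7+21=29$ closed-neighborhoods are pairwise distinct; hence $C_7\in\Gr(7,7,2)$. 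The main obstacle is the $n=6$ case analysis: enumerating the ten isomorphism types and checking each is tedious but mechanical, with every case dispatched by finding either twins, a singleton-pair coincidence, or two $V$-dominating pairs.
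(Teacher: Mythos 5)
Your overall strategy (the degree bounds $2\le\delta\le\Delta\le n-3$, exhaustion of the surviving graphs on $5$ and $6$ vertices, then the arc computation showing $C_7\in\Gr(7,7,2)$) is sound, and it is genuinely different from the paper's proof, which instead argues by cases on the diameter of $G$ and the degree of a vertex realizing it, thereby avoiding any enumeration of isomorphism types. Your degree bounds, the $n\le 5$ cases, and the $n=7$ construction all check out. The gap is in the $n=6$ step: your list of graphs on six vertices with all degrees in $\set{2,3}$ is incomplete --- there are twelve such graphs up to isomorphism, not ten. For the degree sequence $(3,3,2,2,2,2)$, a connected graph whose only branch vertices are the two degree-$3$ vertices need not be a theta graph: the two independent cycles may be vertex-disjoint, giving the ``dumbbell'' formed by two triangles joined by a bridge. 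For the degree sequence $(3,3,3,3,2,2)$, the reduction to ``cubic minus an edge'' only works when the two degree-$2$ vertices are non-adjacent; there is exactly one graph in which they are adjacent, namely $K_4$ with one edge subdivided twice (equivalently, $K_4-e$ plus a path of length three joining the two non-adjacent vertices).

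Both missing graphs happen to contain closed twins --- in the dumbbell the two degree-$2$ vertices of either triangle satisfy $N[a]=N[b]$, and in the subdivided $K_4$ the two vertices of the original $K_4$ that are not endpoints of the subdivided edge have the same closed neighbourhood --- so they fail already to lie in $\Gr(6,6,1)$ and the theorem itself is unaffected. But since your entire $n=6$ argument consists of checking every graph on an asserted complete list, the wrong count is a genuine hole: you must either justify the enumeration and add these two cases, or switch to an argument, such as the paper's diameter-based case analysis, that does not depend on listing isomorphism classes at all.
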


(If we allow isolated vertices, we can trivially take the empty graph
$E_n$ for any $n\ge2$.)

\begin{proof} 
The cycle $C_n\in\Gr(n,n,2)$ for $n\ge7$ by \refE{exa}\ref{exaC}
(see also \cite{L:cage}).

Assume that $G=(V,E)\in\Gr(n,n,2)$ is a graph of order $n\leq 6$ without
isolated vertices; we will show that this leads to a contradiction. 
By \cite{lr}, we know that  $\deg(v)\ge 2$ for all
$v\in V$. We will use this fact frequently in the sequel.

If $G$ is disconnected, the only possibility is that $n=6$ and that
$G$ consists of
two disjoint triangles, but this graph is not even in $\Gr(n,n,1)$.

Hence, $G$ is connected.
Let $x,y\in V$ be such that $d(x,y)=\diam(G)$.

(i)
Suppose that $\diam(G) =1$, or more generally that there exists a
dominating vertex $x$. Then $N[x,y]=N[x]$ for any $y\in V$, which 
is a contradiction.

(ii)
Suppose next $\diam(G) =2$. 
Moreover, by the previous case we can
assume that for any $v\in V$ there is $w\in V$ such that $d(v,w)=2$.

Assume first  $|N(x)|= 4$. Then $S_2(x)=\set y$. Since $\deg(y)\ge2$,
there exist two vertices $w_1,w_2\in N(y)\cap N(x)$, but then
$N[x,w_1]=N[x,w_2]$.

Assume next $|N(x)|=3$, say $N(x)=\{u_1,u_2,u_3\}$. 
Then $|S_2(x)|=n-|\Nx|\le 2$.
Since the four sets
$N[x]$ and $N[x,u_i]$, $i=1,2,3$, must be distinct, we can assume
without loss of generality that $|S_2(x)|=2$, say $S_2(x)=\{y,w\}$,
and that the only edges between the elements in $S_2(x)$ and $N(x)$
are $u_1y$, $u_2w$, $u_3y$ and $u_3w$. 
Then $N[x,u_3]=N[y,u_2]$.

Assume finally that $|N(x)|=2$. By the previous discussion we may
assume that $|N(v)|=2$ for all $v\in V$. Then $G$ must be a cycle $C_n$,
but it can easily be seen that $C_n\notin\Gr(n,n,2)$ for $3\le n\le6$.

(iii)
Suppose that $\diam(G)=3$. Clearly $|N(x)|\ge 2$ and $|S_2(x)|\geq1$. 
If $|S_2(x)|=1$, say $S_2(x)=\{w\}$, then $N[w,y]=N[w]$, which is not
allowed.
Since $n\le6$, we thus have $|N(x)|=2$ and $|S_2(x)|=2$, say
$N(x)=\{u_1,u_2\}$ and $S_2(x)=\{w_1,w_2\}$.
We can assume without loss of generality that $u_1w_1\in E$.
If $w_2u_2\in E$, then $N[w_1,u_2]=N[x,y]$. 
If $w_2u_2\notin E$, then $N[w_1,w_2]=N[w_1]$.

(iv)
Suppose that $\diam(x,y)\ge4$.
Then $G$ contains an induced path $P_5$. There is 
at most one additional vertex, but it is impossible to add it to $P_5$
and obtain $\gd_G\ge2$ and $\diam(G)\ge4$.

 This completes the proof.
\end{proof}

\begin{ack}
Part of this research was done during the
Workshop on Codes and Discrete Probability in Grenoble, France, 2007.
\end{ack}

\newcommand\xx{\hskip4em}
\newcommand\xxx{\xx\phantom0}
\newcommand\xxxx{\xx\phantom{00}}
\begin{table}[h]
\caption{Lower and upper bounds for $\NN(k)$ for some  $k$.
The lower bounds come from the examples given in the last column; for
$n\ge 8$ using Theorem \ref{TSRG}, \ref{paley} or \ref{TRSHCD} or \refL{Ladd1}. The
strongly regular graphs used here can be found from \cite{CombDesI}. 
The upper bounds for $k\ge 7$ come from Theorem~\ref{kbound}.}
\medskip
\label{tab}
\footnotesize{
\begin{tabular}{rrll}
k& lower bound\hskip-2em{} &\hskip2em  upper bound\hskip-2em{} & example\\
\hline
 1 & 1 & \xxxx1 (Ex.\ \ref{exgr1})& $E_1$\\
 2 & 2 & \xxxx 2 (Ex.\ \ref{exgr2}) & $E_2$\\
 3 & 4 & \xxxx4 (Ex.\ \ref{exgr3}, Th.\ref{kbound}) & $C_4, S_4$\\
 4 & 5 & \xxxx5 (Th.\ \ref{exgr54})& Figure~\ref{Figgr54} \\
 5 & 8 & \xxxx8 (Th.\ \ref{kbound}) & \refE{Ecube}\\
 6 & 9 &\xxxx 9 (Th.\ \ref{upperbound})& \refE{Ebcc}, $P(9)$\\
 7 & 11 & \xxx12 (Th.\ \ref{kbound}, Th.\ \ref{upperbound})& Figure~\ref{figGraph}\\
 8 & 13 & \xxx14 & $P(13)$\\ %(13,6,2,3)\SRG\\
 9 & 16 & \xxx16 & \RSHCD+\\ %(16,9,4,6)\SRG\\
 10 & 17 &\xxx 18 & $P(17)$\\ %(17,8,3,4)\SRG\\
 11 & 18 &\xxx 20 & Th.\ \ref{paley}(ii)\\
 12 & 21 &\xxx 22 & (21,10,3,6)\SRG\\
 13 & 22 &\xxx 24 & \refL{Ladd1} \\ %\kolla
 14 & 25 &\xxx 26 & $P(25)$\\ %(25,12,5,6)\SRG\\
 15 & 26 &\xxx 28 & (26,15,8,9)\SRG\\
 16 & 29 &\xxx 30 & $P(29)$\\ %(29,14,6,7)\SRG\\
 17 & 30 &\xxx 32 & Th.\ \ref{paley}(ii)\\
 18 & 31 &\xxx 34 & Th.\ \ref{paley}(ii)\\
 19 & 36 &\xxx 36 & \RSHCD+\\% (36,20,10,12)\SRG\\
 20 & 37 &\xxx 38 & $P(37)$\\  %(37,18,8,9)\SRG\\
 %21 & 37 & 54 & \xx1.7619& (37,18,8,9)\SRG\\
 %22 & 41 & 57 & \xx1.86364& (41,20,9,10)\SRG\\
 %23 & 41 & 60 & \xx1.78261& (41,20,9,10)\SRG\\
 %24 & 45 & 63 & \xx1.875& (45,22,10,11)\SRG\\
 %25 & 45 & 66 & \xx1.8& (45,22,10,11)\SRG\\
 %26 & 49 & 69 & \xx1.88462& (49,24,11,12)\SRG\\
 %27 & 50 & 72 & \xx1.85185& (50,28,15,16)\SRG\\
 %28 & 53 & 75 & \xx1.89286& (53,26,12,13)\SRG\\
 %29 & 53 & 78 & \xx1.82759& (53,26,12,13)\SRG\\
 %30 & 53 & 81 & \xx1.76667& (53,26,12,13)\SRG\\
 %31 & 53 & 84 & \xx1.70968& (53,26,12,13)\SRG\\
%  32 & 61 & 87 & \xx1.91& (61,30,14,15)\SRG\\
 33 & 64 &\xxx 64 & \RSHCD+\\%  (64,35,18,20)\SRG\\
%  34 & 65 & 93 & \xx1.91& Th.\ \ref{paley}(ii)\\
%  35 & 66 & 93 & \xx1.89& Th.\ \ref{paley}(ii)\\
%  36 & 67 & 93 & \xx1.86& Th.\ \ref{paley}(ii)\\
%  37 & 68 & 93 & \xx1.84& Th.\ \ref{paley}(ii)\\
 %34 & 64 & 93 & \xx1.88235& (64,35,18,20)\SRG\\
 %35 & 64 & 96 & \xx1.82857& (64,35,18,20)\SRG\\
 %36 & 64 & 99 & \xx1.77778& (64,35,18,20)\SRG\\
 %37 & 64 & 102 & \xx1.72973& (64,35,18,20)\SRG\\
%  38 & 73 & 105 & \xx1.92& (73,36,17,18)\SRG\\
 %39 & 73 & 108 & \xx1.87179& (73,36,17,18)\SRG\\
% 40 & 73 & 111 & \xx1.825& (73,36,17,18)\SRG\\
% 41 & 73 & 114 & \xx1.78049& (73,36,17,18)\SRG\\
%  42 & 81 & 117 & \xx1.93& (81,40,19,20)\SRG\\
%  43 & 82 & 120 & \xx1.91& (82,45,24,25)\SRG\\
%  %44 & 82 & 123 & \xx1.86364& (82,45,24,25)\SRG\\
% % 45 & 82 & 126 & \xx1.82222& (82,45,24,25)\SRG\\
%  46 & 89 & 129 & \xx1.93& (89,44,21,22)\SRG\\
% % 47 & 89 & 132 & \xx1.89362& (89,44,21,22)\SRG\\
% % 48 & 89 & 135 & \xx1.85417& (89,44,21,22)\SRG\\
% % 49 & 89 & 138 & \xx1.81633& (89,44,21,22)\SRG\\
%  50 & 97 & 141 & \xx1.94& (97,48,23,24)\SRG\\
  51 & 100 &\xx 100 & \RSHCD+\\%  (100,54,28,30)\SRG\\
%  52 & 101 & 147 & \xx1.94& (101,50,24,25)\SRG\\
%  %53 & 101 & 150 & \xx1.90566& (101,50,24,25)\SRG\\
% % 54 & 101 & 153 & \xx1.87037& (101,50,24,25)\SRG\\
% % 55 & 101 & 156 & \xx1.83636& (101,50,24,25)\SRG\\
%  56 & 109 & 159 & \xx1.95& (109,54,26,27)\SRG\\
% % 57 & 109 & 162 & \xx1.91228& (109,54,26,27)\SRG\\
%  58 & 113 & 165 & \xx1.95& (113,56,27,28)\SRG\\
% % 59 & 113 & 168 & \xx1.91525& (113,56,27,28)\SRG\\
% % 60 & 113 & 171 & \xx1.88333& (113,56,27,28)\SRG\\
% % 61 & 113 & 174 & \xx1.85246& (113,56,27,28)\SRG\\
%  62 & 121 & 177 & \xx1.95& (121,60,29,30)\SRG\\
%  63 & 122 &  180 & \xx1.94& (122,66,35,36)\SRG\\
%  64 & 125 & 183 & \xx1.95& (125,62,30,31)\SRG\\
% % 65 & 125 & 186 &  \xx1.92308& (125,62,30,31)\SRG\\
% % 66 & 125 & 189 & \xx1.89394& (125,62,30,31)\SRG\\
% % 67 & 125 & 192 & \xx1.86567& (125,62,30,31)\SRG\\
% % 68 & 125 & 195 & \xx1.83824& (125,62,30,31)\SRG\\
% % 69 & 125 & 198 & \xx1.81159& (125,62,30,31)\SRG\\
%  70 & 137 & 201 & \xx1.96& (137,68,33,34)\SRG\\
% % 71 & 137 & 204 & \xx1.92958& (137,68,33,34)\SRG\\
% % 72 & 137 & 207 & \xx1.90278& (137,68,33,34)\SRG\\
  73 & 144 &\xx 144 & \RSHCD+\\%  (143,70,33,35)\SRG\\
%  %74 & 143 & 213 & \xx1.93243& (143,72,36,36)\SRG\\
%  75 & 144 & 216 & \xx1.92& (144,78,42,42)\SRG\\
%  76 & 149 & 219 & \xx1.96& (149,74,36,37)\SRG\\
% % 77 & 149 & 222 & \xx1.93506& (149,74,36,37)\SRG\\
% % 78 & 149 & 225 & \xx1.91026& (149,74,36,37)\SRG\\
% % 79 & 149 & 228 & \xx1.88608& (149,74,36,37)\SRG\\
%  80 & 157 & 231 & \xx1.96& (157,78,38,39)\SRG\\
% % 81 & 157 & 234 & \xx1.93827& (157,78,38,39)\SRG\\
% % 82 & 157 & 237 & \xx1.91463& (157,78,38,39)\SRG\\
% % 83 & 157 & 240 & \xx1.89157& (157,78,38,39)\SRG\\
% % 84 & 157 & 243 & \xx1.86905& (157,78,38,39)\SRG\\
% % 85 & 157 & 246 & \xx1.84706& (157,78,38,39)\SRG\\
%  86 & 169 & 249 & \xx1.97& (169,84,41,42)\SRG\\
%  87 & 170 & 252 & \xx1.95& (170,91,48,49)\SRG\\
%  88 & 173 & 255 & \xx1.97& (173,86,42,43)\SRG\\
% % 89 & 173 & 258 & \xx1.94382& (173,86,42,43)\SRG\\
% % 90 & 173 & 261 & \xx1.92222& (173,86,42,43)\SRG\\
% % 91 & 173 & 264 & \xx1.9011& (173,86,42,43)\SRG\\
%  92 & 181 & 267 & \xx1.97& (181,90,44,45)\SRG\\
%  %93 & 181 & 270 & \xx1.94624& (181,90,44,45)\SRG\\
%  %94 & 181 & 273 & \xx1.92553& (181,90,44,45)\SRG\\
%  %95 & 181 & 276 & \xx1.90526& (181,90,44,45)\SRG\\
% % 96 & 181 & 279 & \xx1.88542& (181,90,44,45)\SRG\\
% % 97 & 181 & 282 & \xx1.86598& (181,90,44,45)\SRG\\
%  98 & 193 & 285 & \xx1.97& (193,96,47,48)\SRG\\
  99 & 196 &\xx 196 &  \RSHCD+\\% (196,104,54,56)\SRG, \RSHCD+\\
%  100 & 197 & 291 & \xx1.97& (197,98,48,49)\SRG\\
 129 & 256 &\xx 256 & \RSHCD+\\
\end{tabular}}
\end{table}

\bibliographystyle{abbrv}
%\bibliography{../../viitteet/idbib}

\begin{thebibliography}{10}

\bibitem{Auger}
D.~Auger.
\newblock Induced paths in twin-free graphs.
\newblock {\em Electron. J. Combin.}, 15:N 17, 7 pp., 2008.

\bibitem{ACHHL}
D.~Auger, I.~Charon, I.~Honkala, O.~Hudry, and A.~Lobstein.
\newblock Edge number, minimum degree, maximum independent set, radius and
  diameter in twin-free graphs.
\newblock {\em Adv. Math. Commun.}, to appear.

\bibitem{DistRegGraphs}
A.~E. Brouwer, A.~M. Cohen, and A.~Neumaier.
\newblock {\em Distance-regular graphs}, volume~18 of {\em Ergebnisse der
  Mathematik und ihrer Grenzgebiete (3) [Results in Mathematics and Related
  Areas (3)]}.
\newblock Springer-Verlag, Berlin, 1989.

\bibitem{BvL84}
A.~E. Brouwer and J.~H. van Lint.
\newblock Strongly regular graphs and partial geometries.
\newblock In {\em Enumeration and design ({W}aterloo, {O}nt., 1982)}, pages
  85--122. Academic Press, Toronto, ON, 1984.

\bibitem{CombDesI}
C.~J. Colbourn and J.~H. Dinitz, editors.
\newblock {\em The {CRC} handbook of combinatorial designs}.
\newblock CRC Press Series on Discrete Mathematics and its Applications. CRC
  Press, Boca Raton, FL, 1996.

\bibitem{GothSei70}
J.-M. Goethals and J.~J. Seidel.
\newblock Strongly regular graphs derived from combinatorial designs.
\newblock {\em Canad. J. Math.}, 22:597--614, 1970.

\bibitem{GM:ConsIdSets}
S.~Gravier and J.~Moncel.
\newblock Construction of codes identifying sets of vertices.
\newblock {\em Electron. J. Combin.}, 12:R 13, 9 pp., 2005.

\bibitem{Huxley}
M.~N. Huxley.
\newblock {\em The Distribution of Prime Numbers}.
\newblock Oxford University Press, London, 1972.

\bibitem{kcl}
M.~G. Karpovsky, K.~Chakrabarty, and L.~B. Levitin.
\newblock On a new class of codes for identifying vertices in graphs.
\newblock {\em IEEE Trans. Inform. Theory}, 44(2):599--611, 1998.

\bibitem{LT:IdcodeCovering}
M.~Laifenfeld and A.~Trachtenberg.
\newblock Identifying codes and covering problems.
\newblock {\em IEEE Trans. Inform. Theory}, 54(9):3929--3950, 2008.

\bibitem{laihonen}
T.~Laihonen.
\newblock Sequences of optimal identifying codes.
\newblock {\em IEEE Trans. Inform. Theory}, 48(3):774--776, 2002.

\bibitem{L:cage}
T.~Laihonen.
\newblock On cages admitting identifying codes.
\newblock {\em European J. Combin.}, 29(3):737--741, 2008.

\bibitem{lr}
T.~Laihonen and S.~Ranto.
\newblock Codes identifying sets of vertices.
\newblock In {\em Applied algebra, algebraic algorithms and error-correcting
  codes (Melbourne, 2001)}, volume 2227 of {\em Lecture Notes in Computer
  Science}, pages 82--91. Springer, Berlin, 2001.

\bibitem{lowww}
A.~Lobstein.
\newblock Identifying and locating-dominating codes in graphs, a bibliography.
\newblock Published electronically at {\tt
  http://perso.enst.fr/$\sim$lobstein/debutBIBidetlocdom.pdf}.

\bibitem{macwil}
F.~J. MacWilliams and N.~J.~A. Sloane.
\newblock {\em {The Theory of Error--Correcting Codes}}, volume~16 of {\em
  {North--Holland Mathematical Library}}.
\newblock North--Holland, Amsterdam, 1977.

\bibitem{Mantel}
W.~Mantel.
\newblock Problem 28.
\newblock {\em Wiskundige Opgaven}, pages 60--61, 1907.

\bibitem{Wallis}
J.~Seberry~Wallis.
\newblock Hadamard matrices.
\newblock In W.~D. Wallis, A.~P. Street, and J.~S. Wallis, editors, {\em
  Combinatorics: {R}oom squares, sum-free sets, {H}adamard matrices}, Lecture
  Notes in Mathematics, vol. 292, Springer-Verlag, Berlin-New York, 1972.

\end{thebibliography}

\end{document}